\documentclass[11pt]{amsart}
\usepackage{amssymb}
\usepackage{tikz}
\usepackage{youngtab}
\usepackage{upgreek}
\usepackage[T1]{fontenc}\usepackage{palatino} % better font

\usepackage[pagebackref]{hyperref} % this makes life easier for readers
\hypersetup{colorlinks,linkcolor=blue,urlcolor=blue,citecolor=blue}
\usepackage[alphabetic,abbrev,nobysame]{amsrefs} % load AFTER hyperref! 

%%%%%%%%%%%%%%%%%%%%%%%%%%%%%%%%%%%%%%%%%%%%%%%%%%%%%%%%%%%%
%% BEGIN: Partition-drawing code swiped from Zajj Daugherty
%% For partitions. Takes in a list of row lengths. 

\newcommand{\Part}[1]{
 \foreach \x [count=\s from 1] in {#1}{
 	{\ifnum\s=1
		\draw (0,\s-1)--(\x,\s-1); 
		\fi}
   \draw (0,\s) to (\x,\s);
   \foreach \y in {0, ..., \x} {\draw (\y,\s)--(\y,\s-1);}
 }}

\def\UNIT{.18} \newcommand{\PART}[1]{
\begin{tikzpicture}[xscale=\UNIT, yscale=-\UNIT] 
	\Part{#1}
\end{tikzpicture}
}
%% END: Partition-drawing code
%%%%%%%%%%%%%%%%%%%%%%%%%%%%%%%%%%%%%%%%%%%%%%%%%%%%%%%%%%%%
\newcommand{\link}{
\begin{tikzpicture}[scale = 0.35,thick, baseline={(0,-1ex/2)}]
  \tikzstyle{vertex} = [shape = circle, minimum size = 4pt,
    inner sep = 1pt, fill=black] 
\node[vertex] (G-1) at (0.0, 0.25) [shape = circle, draw] {}; 
\node[vertex] (G-2) at (1.5, 0.25) [shape = circle, draw] {}; 
\draw (G-1) .. controls +(0.5, -0.5) and +(-0.5, -0.5) .. (G-2); 
\end{tikzpicture} 
}

\newcommand{\ypic}{
\begin{tikzpicture}[scale = 0.35,thick, baseline={(0,-1ex/2)}] 
  \tikzstyle{vertex} = [shape = circle, minimum size = 4pt,
    inner sep = 1pt, fill=black] 
\node[vertex] (G-1) at (0.0, 0.4) [shape = circle, draw] {};
\draw[] (G-1) -- (0,-.35); 
\end{tikzpicture}
}

\newcommand{\onepic}{
\begin{tikzpicture}[scale = 0.35,thick, baseline={(0,-1ex/2)}] 
\tikzstyle{vertex} = [shape = circle, minimum size = 4pt, inner sep = 1pt] 
\node[vertex] (G--1) at (0.0, -1) [shape = circle, draw,fill=black] {}; 
\node[vertex] (G-1) at (0.0, 1) [shape = circle, draw,fill=black] {}; 
\draw (G-1) .. controls +(0, -1) and +(0, 1) .. (G--1); 
\end{tikzpicture}
}

\newcommand{\epic}{
\begin{tikzpicture}[scale = 0.35,thick, baseline={(0,-1ex/2)}]
\tikzstyle{vertex} = [shape = circle, minimum size = 4pt, inner sep = 1pt] 
\node[vertex] (G--2) at (1.5, -1) [shape = circle, draw,fill=black] {}; 
\node[vertex] (G--1) at (0.0, -1) [shape = circle, draw,fill=black] {}; 
\node[vertex] (G-1) at (0.0, 1) [shape = circle, draw,fill=black] {}; 
\node[vertex] (G-2) at (1.5, 1) [shape = circle, draw,fill=black] {}; 
\draw (G--2) .. controls +(-0.5, 0.5) and +(0.5, 0.5) .. (G--1); 
\draw (G-1) .. controls +(0.5, -0.5) and +(-0.5, -0.5) .. (G-2); 
\end{tikzpicture} 
}

%%%%%%%%%%%%%%%%%%%%
% theorems and such
%%%%%%%%%%%%%%%%%%%%
\swapnumbers % theorem numbers on the LEFT! 
\newtheorem{thm}{Theorem}[section]
\newtheorem*{thm*}{Theorem}
\newtheorem{lem}[thm]{Lemma}
\newtheorem*{lem*}{Lemma}
\newtheorem{prop}[thm]{Proposition}
\newtheorem*{prop*}{Proposition}
\newtheorem{cor}[thm]{Corollary}
\newtheorem*{cor*}{Corollary}

\newtheorem*{conj*}{Conjecture}

\theoremstyle{definition}

\newtheorem*{defn*}{Definition}
\newtheorem{example}[thm]{Example}
\newtheorem*{example*}{Example}
\newtheorem{rmk}[thm]{Remark}
\newtheorem*{rmk*}{Remark}

\newtheorem*{que*}{Question}

%%%%%%%%%
% macros
%%%%%%%%%
 % complex numbers
\newcommand{\Z}{\mathbb{Z}} % integers
\newcommand{\N}{\mathbb{N}} % natural numbers
\newcommand{\UU}{\mathbf{U}} % bold U
\newcommand{\TL}{\operatorname{TL}} % Temperley-Lieb alg
\newcommand{\End}{\operatorname{End}} % endomorphisms
\newcommand{\Hom}{\operatorname{Hom}} % homomorphisms
 % general linear Lie alg
\newcommand{\fsl}{\mathfrak{sl}} % special linear Lie alg
 % overline short form
\newcommand{\Schur}{\mathbf{S}} % Schur algebra
\newcommand{\bil}[2]{\langle #1, #2 \rangle}
\newcommand{\qbinom}[2]{\genfrac{[}{]}{0pt}{}{#1}{#2}}
\newcommand{\M}{\mathcal{M}} % maximal vectors
\newcommand{\cc}{\mathfrak{c}} % no of paths
 % tableaux
\newcommand{\omax}{\upmu} % orthogonal maximal vectors
\newcommand{\nat}{\upeta} % naive basis vectors
\newcommand{\PM}{\operatorname{PM}} % pairing monomial
\newcommand{\Sym}{\mathfrak{S}} % symmetric group

\renewcommand{\labelenumi}{(\alph{enumi})}
\parskip=2pt

\allowdisplaybreaks

\begin{document}
\title[An orthogonal realization of representations]%
      {An orthogonal realization of representations\\of the
        Temperley--Lieb algebra}
\author{Stephen Doty}
\email{doty@math.luc.edu, tonyg@math.luc.edu}
\author{Anthony Giaquinto}
%\email{tonyg@math.luc.edu}
\dedicatory{Dedicated to the memory of Georgia Benkart}
\address{Department of Mathematics and Statistics,
  Loyola University Chicago, Chicago, IL 60660 USA}
\subjclass{Primary 16T30, 16G99, 17B37}
\keywords{Diagram algebras, Schur--Weyl duality,
 Temperley--Lieb algebras, quantized enveloping algebras, Schur algebras}
\begin{abstract}\noindent
Under a suitable hypothesis, we construct a full set of pairwise
orthogonal maximal vectors in $V^{\otimes n}$, where $V=V(1)$ is the
simple module of highest weight $1$ for the quantized enveloping
algebra $\UU(\fsl_2)$. We give a number of applications, one of which
is an orthogonal basis of the simple modules for the Temperley--Lieb
algebra $\TL_n$. We relate this new orthogonal basis to the standard
cellular basis.
\end{abstract}
\maketitle

\section*{Introduction}\noindent
We work over an arbitrary field $\Bbbk$, unless stated otherwise. Fix
a nonzero element $v$ of $\Bbbk$. Let $\Schur(n)$ be the quantum Schur
algebra (with underlying Lie algebra $\fsl_2$) over $\Bbbk$ of degree
$n$.  Let $V=V(1)$ be the two dimensional simple $\Schur(1)$-module;
its $n$th tensor power $V^{\otimes n}$ is an $\Schur(n)$-module.  The
standard nondegenerate bilinear form on $V$ extends naturally to
$V^{\otimes n}$.  Let $[k] = \sum_{t=0}^{k-1} v^{k-1-2t}$ be the
(balanced) quantum integer at $v$ corresponding to $0\le k \in \Z$.
If
\[
[n]! = [n][n-1] \cdots [1]  \ne 0 \quad\text{ in } \Bbbk
\]
then $\Schur(n)$ is split semisimple over $\Bbbk$; see
\S\ref{s:rk1SA}.  Under the same hypothesis, in \S\ref{s:orthmv} we
inductively construct a full set $\{\omax(p)\}$ of pairwise orthogonal
maximal vectors in $V^{\otimes n}$, indexed by the set of walks of
length $n$ on the Bratteli diagram.  (Lemma~\ref{l:bijections} gives a
number of equivalent ways of indexing the maximal vectors; for this
Introduction we stick to walks on Bratteli as our preferred indexing
set.)  This gives an explicit semisimple decomposition
\begin{equation}\label{e:orth-decomp}
V^{\otimes n} = \textstyle \bigoplus_p \Schur(n) \omax(p)
\end{equation}
as $\Schur(n)$-modules, where $p$ ranges over the same indexing
set. If $\omax(p)$ has weight $k$, where (necessarily) $k \equiv n$
(mod $2$) then $\Schur(n) \omax(p) \cong V(k)$, the simple
$\Schur(n)$-module of highest weight $k$. Furthermore, the
decomposition in \eqref{e:orth-decomp} is orthogonal.
This construction has a number of easy applications:
\begin{enumerate}\renewcommand{\labelenumi}{\arabic{enumi}. }
\item The linear span $C(k)$ of the maximal vectors of weight $k$ is a
  simple module for the centralizer algebra
  $\End_{\Schur(n)}(V^{\otimes n})$. If $n$ is even, we get a full set
of pairwise orthogonal $\Schur(n)$-invariants in $C(0)$.
\item The algebra $\End_{\Schur(n)}(V^{\otimes n})$ is isomorphic to the
  Temperley--Lieb algebra $\TL_n:= \TL_n(\delta)$ at parameter $\delta
  = \pm(v+v^{-1})$, and $\TL_n$ is split semisimple over $\Bbbk$.
\item $V^{\otimes n} \cong \bigoplus_{k} V(k)\otimes C(k)$, and the
  summands are simple bimodules for the commuting actions of
  $\Schur(n)$ and $\TL_n$, where the sum is over the set of $k
  \equiv n$ (mod $2$). Equivalently, the summands are simple
  $\Schur(n) \otimes \TL_n$-modules.
\item $V^{\otimes n}$ satisfies Schur--Weyl duality as such a
  bimodule; i.e., the span of the endomorphisms coming
  from each action is equal to the centralizer of the other.
\item The set $\{ C(k): k\ge 0,\ k \equiv n (\text{mod } 2) \}$ is a
  full set of simple $\TL_n$-modules, up to isomorphism.
\end{enumerate}
Using (quantum) Schur algebras $\Schur(n)$ instead of the quantized
enveloping algebra $\UU(\fsl_2)$ simplifies many proofs. Indeed, we
are able to prove all of the above from scratch, without needing any
facts about $\TL_n$ other than its dimension.  Moreover, the Schur
algebras make sense when $v=1$ (without any limiting procedure) and we
do not need to bother about integral forms or change of base ring.

The condition $[n]! \ne 0$ holds precisely when $v$ avoids certain
small roots of unity. The sufficiency of this condition for split
semisimplicity of $\TL_n = \TL_n(\delta)$ over $\Bbbk$, where $\delta=
\pm(v+v^{-1})$, is implicit in Goodman, de la Harpe, and Jones
\cite{GHJ}; see \cite{DG:PTL}*{Appendix} or \cite{DG:survey}.
Andersen, Stroppel, and Tubbenhauer \cite{AST} gave a sophisticated
proof of this fact using tilting modules, but other proofs of this
basic result are scarce in the literature.  We think that the
result ought to be better known; compare for instance with the
complicated conditions in \cite{Martin} or \cite{Benkart-Moon}.  The
proof given here is quite elementary, and the new orthogonal basis is
of independent interest. The aims of this paper are probably most
closely related to those in \cite{P-SA}, although our techniques and
results are very different.

In \S\ref{s:2nd}, we give a second construction of maximal vectors
$\nat(p)$ in $V^{\otimes n}$, also indexed by walks on the Bratteli
diagram. The $\nat(p)$ turn out (see \S\ref{s:cellular}) to realize,
in tensor space, the natural cellular basis (see \cite{GL:96}) given
by half-diagrams.  The $\nat(p)$ of weight $k$ are in a triangular
(but not unitriangular) base change relation with the $\omax(p)$ of
the same weight. In \S\ref{s:2nd}, \S\ref{s:pi} we give both recursive
and non-recursive formulas for computing the coefficients of such
transitions. Let
\[
\nat(p) = \textstyle \sum_{p'} \pi_{p,p'} \omax(p'), \qquad
\omax(p) = \textstyle \sum_{p'} \pi'_{p,p'} \nat(p')
\]
define the transition coefficients. These coefficients satisfy the
following striking properties:
\begin{enumerate}\renewcommand{\labelenumi}{\arabic{enumi}. }
  \setcounter{enumi}{5}
\item Every $\pi_{p,p'}$ is, up to sign, the reciprocal of a product of
  quantum integers.
\item There exist polynomials $\pi''_{p,p'}(t_1, t_2, \dots)$ in
  $\N[t_1,t_2,\dots]$, where $\N$ is the set of nonnegative integers,
  such that $\pi'_{p,p'} = \pi''_{p,p'}([1],[2],\dots)$.
\end{enumerate}

In \S\ref{s:action}, we give explicit formulas for the action of the
Temperley--Lieb generators on the orthogonal maximal vectors.

As $\delta=\pm(v+v^{-1})=\pm[2]$, our assumption $[n]! \ne 0$ in
particular says that $\delta\ne0$, as long as $n\ge 2$, so our
techniques yield no new information on representations of $\TL_n(0)$.
Finally, we note that all of the results of this paper make sense at
$v=1$. The condition $[n]! \ne 0$ becomes $n! \ne 0$ when $v=1$,
equivalent to the necessary and sufficient condition in Maschke's
theorem for semisimplicity of the group algebra $\Bbbk[\Sym_n]$, where
$\Sym_n$ is the symmetric group. In particular, $\TL_n(\pm2)$ is
semisimple if $n! \ne 0$ in $\Bbbk$.

Although we do not explore such applications here, we believe that our
constructions will be of some value in non-semisimple situations. We
hope to return to that question in future work.

\section{Preliminaries}\label{s:prelim}\noindent
The Temperley--Lieb algebra $\TL_n(\delta)$ appeared originally in
\cite{TL} in connection with the Potts model in mathematical physics.
It reappeared in the 1980s in the spectacular work of V.F.R.~Jones
\cites{Jones:83, Jones:85, Jones:86, Jones} on knot theory. Kauffman
\cites{K:87,K:88,K:90} (see also \cite{Kauffman}) realized it as a
diagram algebra and as a quotient algebra of the group algebra of
Artin's braid group. Birman and Wenzl \cite{Birman-Wenzl} showed that
it is isomorphic to a subalgebra of the Brauer algebra \cite{Brauer};
this also follows from Kauffman's results. See \cite{DG:survey} for a
recent survey with an historical perspective. 

Let $\Bbbk$ be a commutative ring. For any positive integer $n$ and
any element $\delta$ in $\Bbbk$, $\TL_n(\delta)$ is the unital algebra
defined by the generators $e_1, \dots, e_{n-1}$ subject to the
relations
\begin{equation}\label{e:TL}
  \begin{aligned}
  e_i^2 &= \delta e_i\\
  e_i e_j e_i = e_i \text{ if } |i-j|=1, &\quad
  e_i e_j = e_j e_i \text{ if } |i-j|>1 .
\end{aligned}
\end{equation}
The unit element $1$ of the algebra is identified with the empty
product of generators. There is an algebra isomorphism
\[
\TL_n(\delta) \cong \TL_n(-\delta)
\]
defined on generators by $e_i \mapsto -e_i$. Furthermore,
$\TL_{n-1}(\delta)$ is isomorphic to the subalgebra of $\TL_n(\delta)$
generated by $e_1, \dots, e_{n-2}$. It is well known that $\TL_n(\delta)$
is free over $\Bbbk$ of rank
\begin{equation}
  \text{rk}_\Bbbk \TL_n(\delta) = \frac{1}{n+1}\binom{2n}{n} \qquad
  \text{(the $n$th Catalan number)}.
\end{equation}

\section{The rank-$1$ quantum Schur algebra}\label{s:rk1SA}\noindent
Let $\Bbbk$ be a field throughout this section. Fix a choice of
parameter $0 \ne v \in \Bbbk$. The balanced form $[n] = [n]_v$ of the
quantum integer corresponding to a nonnegative integer $n$ is defined
by
\[
[n] = \textstyle \sum_{k=0}^{n-1} v^{-(n-1)+2k}. 
\]
In particular, $[0]=0$, $[1]=1$, and $[2] = v+v^{-1}$. The definition
is extended to negative integers by setting $[-n] = -[n]$.  Then $[n]$
makes sense when $v=1$, in which case it evaluates to the integer $n$.
If $v^2 \ne 1$ then we have $[n] = (v^n - v^{-n})/(v-v^{-1})$ for any
$n \in \Z$.  Finally, we define the quantum factorial $[n]!  = [1]
\cdots[n-1] [n]$.

For any nonnegative integer $n$ we set
\[
X(n) = \{i \in \Z \cap [-n,n]: i \equiv n \text{
  (mod~$2$)}\}.
\]
The (quantum) \emph{Schur algebra} $\Schur(n)$ is the associative
$\Bbbk$-algebra (with unit element $1$) given by the generators $E$,
$F$, and $1_i$ ($i \in X(n)$) subject to the defining relations
\begin{align*}
  1_i1_j &= \delta_{ij} 1_i, \quad \textstyle \sum_{i \in X(n)} 1_i = 1,
  \tag{R1}\label{R1}\\
  EF &- FE = \textstyle \sum_{i \in X(n)} [i]1_i , \tag{R2}\label{R2}\\
  E 1_i &=
  \begin{cases}
    1_{i+2} E &\text{if } i+2 \in X(n)\\
    0 & \text{otherwise,}
  \end{cases}
  \quad
  F 1_i =
  \begin{cases}
    1_{i-2} F &\text{if } i-2 \in X(n) \\
    0 &\text{otherwise,}
  \end{cases}
  \tag{R3}\label{R3}\\
  1_i E &=
  \begin{cases}
    E 1_{i-2} &\text{if } i-2 \in X(n)\\
    0 &\text{otherwise,}
  \end{cases}
  \quad
  1_i F =
  \begin{cases}
    F1_{i+2} &\text{if }  i+2 \in X(n)\\
    0 &\text{otherwise.}
  \end{cases}
  \tag{R4}\label{R4}
\end{align*}
Relation~\eqref{R1} says that $\{1_i: i \in X(n)\}$ is a set of
pairwise orthogonal idempotents summing to the identity. (The symbol
$\delta_{ij}$ is the usual Kronecker delta.) The algebra $\Schur(n)$
makes sense at $v = 1$, in which case the quantized integers
in~\eqref{R2} become ordinary integers.  We define an element $K$ in
$\Schur(n)$ by setting
\[
K:= \textstyle \sum_{i \in X(n)} v^i 1_i.
\]
Notice that $K$ is invertible, with $K^{-1} = \sum_{i \in X(n)} v^{-i}
1_i$. In terms of this element, relation~\eqref{R2} is equivalent to
\begin{equation}
  EF - FE = \frac{K-K^{-1}}{v-v^{-1}}
\end{equation}
provided that $v^2 \ne 1$. Although this formula is problematic at $v
= \pm 1$, the Schur algebra makes perfect sense at those values. This
is one reason we prefer to work with $\Schur(n)$ instead of the
quantized enveloping algebra $\UU(\fsl_2)$.

For convenience of notation, we introduce additional symbols $1_i = 0$
(all of which are zero) for any $i \in \Z \setminus X(n)$. With these
additional symbols, the defining relations \eqref{R1}--\eqref{R4} take
the simpler equivalent form
\begin{align*}
  1_i1_j &= \delta_{ij} 1_i, \quad\textstyle \sum_{i \in \Z} 1_i = 1
  \tag{R$1'$}\label{R1'}\\
  EF &- FE = \textstyle \sum_{i \in \Z} [i]1_i \tag{R$2'$}\label{R2'}\\
  E 1_i &= 1_{i+2} E, \quad F 1_i =  1_{i-2} F \tag{R$3'$}\label{R3'}
\end{align*}
in which all the sums are actually finite. 

\begin{lem}
  \upshape{(i)}
  There is a unique algebra involution $\omega: \Schur(n) \to
  \Schur(n)$ such that $\omega(E) = F$, $\omega(F) = E$, and
  $\omega(1_i) = 1_{-i}$, for any $i \in X(n)$.
  \upshape{(ii)}
  There is a unique algebra anti-involution $\sigma: \Schur(n) \to
  \Schur(n)$ with $\sigma(E) = E$, $\sigma(F) = F$, and $\sigma(1_i) =
  1_{-i}$ for any $i \in X(n)$.
\end{lem}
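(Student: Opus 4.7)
The plan is to invoke the universal property of the presentation: since $\Schur(n)$ is defined by generators and the relations \eqref{R1}--\eqref{R4}, a $\Bbbk$-algebra homomorphism (or anti-homomorphism) out of $\Schur(n)$ is determined uniquely by its values on $E$, $F$, and the $1_i$, provided those prescribed values satisfy the defining relations.

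Uniqueness in both (i) and (ii) is therefore immediate, since the generators are fully specified. For existence of $\omega$, I would define it on the free algebra on $\{E,F,1_i\}$ by the stated formulas and verify that the relations are preserved. Relation \eqref{R1} is preserved because $i \mapsto -i$ is a bijection of $X(n)$, so $\omega(1_i)\omega(1_j) = 1_{-i}1_{-j} = \delta_{ij}\,1_{-i}$ and $\sum_i \omega(1_i) = \sum_i 1_{-i} = 1$. For \eqref{R2} I would use that $[-i] = -[i]$: applying $\omega$ to both sides gives $FE - EF = \sum_i [i]\,1_{-i} = \sum_j [-j]\,1_j = -\sum_j [j]\,1_j$, which is the negative of \eqref{R2}, as required. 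For \eqref{R3} and \eqref{R4}, note that $i+2 \in X(n)$ iff $-(i+2) = (-i)-2 \in X(n)$ by symmetry of $X(n)$ under negation, so applying $\omega$ to, say, $E1_i = 1_{i+2}E$ yields $F1_{-i} = 1_{-i-2}F$, which is an instance of \eqref{R3} applied to the index $-i$. The remaining cases are symmetric. This produces $\omega$ as an algebra endomorphism. Finally, $\omega^2$ fixes each generator, hence $\omega^2 = \mathrm{id}$, so $\omega$ is an involution.

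For existence of $\sigma$ in (ii), I would do the analogous check after composing with the opposite multiplication, i.e.\ define $\sigma$ on the free algebra so that $\sigma(xy) = \sigma(y)\sigma(x)$, and verify the relations \eqref{R1}--\eqref{R4} are preserved. The only step that differs substantively from the $\omega$ computation is the commutator relation: applying $\sigma$ to \eqref{R2} gives $\sigma(F)\sigma(E) - \sigma(E)\sigma(F) = FE - EF$ on the left, while the right-hand side becomes $\sum_i [i]\,1_{-i} = -\sum_i [i]\,1_i$, and these agree. Relations \eqref{R3}--\eqref{R4} are interchanged by $\sigma$: for instance $\sigma(E1_i) = 1_{-i}E$ is compared to $\sigma(1_{i+2}E) = E 1_{-i-2}$, and the required equality $1_{-i}E = E 1_{-i-2}$ is the instance of \eqref{R4} at index $-i$, again using the negation-symmetry of $X(n)$. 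As before $\sigma^2$ fixes each generator, so $\sigma$ is an anti-involution.

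The only mildly subtle point, and the one I would highlight in writing, is that $\omega$ and $\sigma$ really do permute the two families of relations in \eqref{R3} and \eqref{R4} rather than preserve each individually; the check that this permutation is compatible with the map $i \mapsto -i$ on $X(n)$ is what makes everything go through. No harder obstacle is expected, since the Schur algebra is presented and the candidate maps are entirely explicit on generators.
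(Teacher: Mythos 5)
Your proof is correct and takes essentially the same approach as the paper: check that the proposed images of the generators satisfy the defining relations \eqref{R1}--\eqref{R4} (for $\omega$) or the relations of the opposite algebra $\Schur(n)^{\mathrm{opp}}$ (for $\sigma$), invoke the universal property of the presentation for existence and uniqueness, and note that the square fixes the generators. The paper compresses the verification into a single ``observe''; your write-up supplies the details (the reindexing $i\mapsto -i$, the sign $[-i]=-[i]$ in \eqref{R2}, and the way \eqref{R3} and \eqref{R4} are permuted), and they all check out.
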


\begin{proof}
To prove (i), observe that the triple of elements
\[
(\omega(E), \omega(F), \omega(1_i)) = (F,E,1_{-i})
\]
satisfies the defining relations. As $\omega^2$ fixes the generators,
$\omega^2 = 1$. For (ii), observe that the triple of elements
\[
(\sigma(E), \sigma(F), \sigma(1_i)) = (E,F,1_{-i})
\]
satisfies the defining relations of the opposite algebra
$\Schur(n)^\text{opp}$.
\end{proof}

\begin{lem}\label{l:S-props}
The following properties hold in $\Schur(n)$.
\begin{enumerate}
\item $E^m 1_i = 1_{i+2m} E^m$ and $F^m 1_i = 1_{i-2m} F^m$, for any
  $i \in X(n)$, $m \ge 0$.
\item $E^{n+1} = F^{n+1} = 0$.
\item $E F^a = F^a E + \sum_{i \in \Z} [i] \sum_{t=0}^{a-1} F^{a-1-t}
  1_i F^t$, for any $a \ge 0$.
\item $F E^a = E^a F - \sum_{i \in \Z} [i] \sum_{t=0}^{a-1} E^{a-1-t}
  1_i E^t$, for any $a \ge 0$.
\end{enumerate}
\end{lem}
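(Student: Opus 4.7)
The plan is to handle the four parts in the natural order, since later parts build on earlier ones.

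For (a), I would argue by induction on $m$. The case $m=1$ is exactly relation \eqref{R3'}. For the inductive step, write $E^m 1_i = E(E^{m-1} 1_i) = E \cdot 1_{i+2(m-1)} E^{m-1}$, then apply \eqref{R3'} once more to get $1_{i+2m} E^m$. The argument for $F^m 1_i = 1_{i-2m} F^m$ is identical. Note that this statement makes sense for all $i \in \Z$ in the expanded form \eqref{R3'}, since we allow $1_j = 0$ for $j \notin X(n)$.

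For (b), I would use (a) combined with the completeness relation \eqref{R1'}. Write
\[
E^{n+1} = E^{n+1} \cdot 1 = \sum_{i \in X(n)} E^{n+1} 1_i = \sum_{i \in X(n)} 1_{i+2(n+1)} E^{n+1}.
\]
For each $i \in X(n)$ one has $i \ge -n$, hence $i + 2(n+1) \ge n+2 > n$, so $i+2(n+1) \notin X(n)$ and $1_{i+2(n+1)} = 0$. Therefore every summand vanishes and $E^{n+1}=0$. The argument for $F^{n+1}=0$ is symmetric (use $i \le n$ to force $i - 2(n+1) < -n$). Alternatively, the involution $\omega$ from the previous lemma swaps $E$ and $F$ and reduces one case to the other.

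For (c) and (d), I would induct on $a$. The case $a = 0$ is trivial and $a = 1$ is exactly \eqref{R2'}. For the inductive step in (c), multiply the assumed identity on the right by $F$ and expand $EF$ in the resulting leading term using \eqref{R2'}:
\[
EF^{a+1} = (EF^a)F = F^a(EF) + \sum_{i\in\Z}[i]\sum_{t=0}^{a-1} F^{a-1-t} 1_i F^{t+1}.
\]
Applying \eqref{R2'} to $EF$ yields $F^a E F$ replaced by $F^{a+1}E + F^a \sum_i [i] 1_i$; the term $F^a\sum_i[i]1_i = \sum_i[i]F^a 1_i$ supplies the missing $t=0$ summand, while the reindexing $t \mapsto t+1$ in the remaining sum accounts for $t = 1, \dots, a$. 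The result is $F^{a+1}E + \sum_i [i] \sum_{t=0}^{a} F^{a-t} 1_i F^t$, as required. Part (d) follows either by the same induction with $E$ and $F$ swapped and a sign adjustment, or more cleanly by applying the anti-involution $\sigma$ (which fixes $E$, $F$ and sends $1_i \mapsto 1_{-i}$) after a substitution $i \mapsto -i$, using $[-i] = -[i]$.

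There is no real obstacle here; the only point requiring attention is the bookkeeping in the inductive step of (c), specifically checking that the $t=0$ term produced from the commutator $[E,F]$ merges correctly with the shifted sum to give the advertised range $0 \le t \le a$.
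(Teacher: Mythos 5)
Your argument for parts (a), (b), and (c) is correct and is essentially the paper's proof (the paper derives (b) from (a) and proves (c) by induction on $a$ from \eqref{R2'}, exactly as you do); your bookkeeping in the inductive step of (c) checks out. The one problem is your ``cleaner'' route to (d): applying the anti-involution $\sigma$ to (c) does \emph{not} yield (d). Since $\sigma$ fixes $E$ and $F$ and reverses products, applying it to (c) gives
\[
F^a E \;=\; E F^a \;+\; \sum_{i\in\Z}[i]\sum_{t=0}^{a-1} F^{t}\,1_{-i}\,F^{a-1-t},
\]
and after the substitutions $i\mapsto -i$, $t\mapsto a-1-t$ this is just a rearrangement of (c) itself (a formula for $F^aE$ in terms of $EF^a$), whereas (d) concerns $FE^a$ and $E^aF$. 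To pass from powers of $F$ to powers of $E$ you need the map that interchanges $E$ and $F$, namely the algebra involution $\omega$ (with $\omega(1_i)=1_{-i}$), which is what the paper uses: applying $\omega$ to (c) and then replacing $i$ by $-i$ with $[-i]=-[i]$ gives (d) with the correct minus sign. Your other stated option for (d) --- redoing the induction with $E$ and $F$ swapped --- does work, so the proposal as a whole is salvageable; just drop or replace the $\sigma$ alternative.
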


\begin{proof}
Part (a) follows from relation~\eqref{R3'} by induction on $m$, and
(b) follows from (a). One gets (c) from relation~\eqref{R2'} by
induction on $a$, and then (d) follows from (c) by applying $\omega$
and using the identity $[-i] = -[i]$.
\end{proof}

\begin{lem}\label{l:span}
  The algebra $\Schur(n)$ is linearly spanned by either of the sets
  \begin{enumerate}
  \item $\{F^a 1_i E^b: i \in X(n), 0 \le a,b \le n\}$.
  \item $\{E^a 1_i F^b: i \in X(n), 0 \le a,b \le n\}$.
  \end{enumerate}
  Hence it is finite dimensional over the field $\Bbbk$.
\end{lem}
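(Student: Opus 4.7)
The plan is to prove (a) by showing that the linear span $M$ of the set $\{F^a 1_i E^b : i \in X(n),\ 0 \le a,b \le n\}$ is already a left ideal of $\Schur(n)$, and contains $1$; since the generators $E, F, 1_i$ act on the left, this will force $\Schur(n) = \Schur(n)\cdot 1 \subseteq M$, proving (a). Then (b) follows immediately by applying the algebra involution $\omega$ from Lemma~(i), since $\omega(F^a 1_i E^b) = E^a 1_{-i} F^b$ and $X(n)$ is symmetric about $0$, so $\omega$ carries the spanning set in (a) bijectively onto the one in (b). Finally, both spanning sets are finite, so $\Schur(n)$ is finite-dimensional.

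First I would verify $1 \in M$ using \eqref{R1}: $1 = \sum_{i \in X(n)} 1_i = \sum_i F^0 1_i E^0$. Closure under left multiplication by $1_j$ follows from Lemma~\ref{l:S-props}(a), which gives $1_j F^a 1_i E^b = F^a 1_{j+2a} 1_i E^b = \delta_{j+2a,\,i}\, F^a 1_i E^b$. Closure under left multiplication by $F$ is even easier: $F \cdot F^a 1_i E^b = F^{a+1} 1_i E^b$, which lies in $M$ when $a+1 \le n$, and equals $0$ when $a = n$ by Lemma~\ref{l:S-props}(b).

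The only nontrivial closure is left multiplication by $E$, and this is where Lemma~\ref{l:S-props}(c) does the work. Using that identity together with Lemma~\ref{l:S-props}(a) to commute $1_i$'s past powers of $F$, the product $E \cdot F^a 1_i E^b$ rewrites as
\[
F^a 1_{i+2} E^{b+1} \;+\; \Bigl(\sum_{t=0}^{a-1}[i-2t]\Bigr)\, F^{a-1} 1_i E^b,
\]
after the inner sum over $j$ collapses by orthogonality of the $1_j$. The first term lies in $M$ (the factor vanishes if $i+2 \notin X(n)$ by the convention in \eqref{R1'}, and $E^{b+1} = 0$ if $b = n$); the second term lies in $M$ because the sum is empty when $a = 0$ and otherwise $a-1 \le n$. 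This establishes that $M$ is closed under left multiplication by all generators.

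The main potential obstacle is bookkeeping: making sure the boundary cases ($a=0$, $a=n$, $b=n$, and $i\pm 2 \notin X(n)$) are all absorbed either by the empty-sum convention or by the nilpotency and truncation built into Lemma~\ref{l:S-props}(a)--(b) and the convention $1_j = 0$ for $j \notin X(n)$. Once this is handled, the argument is purely formal: $M$ is a left ideal containing $1$, hence $M = \Schur(n)$, and both (a), (b), and the finite-dimensionality follow.
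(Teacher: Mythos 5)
Your proposal is correct and follows essentially the same route as the paper: show the span of the set in (a) contains $1$ and is stable under left multiplication by $E$, $F$, and the $1_j$ (with the $E$ case handled via Lemma~\ref{l:S-props}(c)), then transport to (b) via $\omega$. Your bookkeeping of the boundary cases is fine, and your index $1_{i+2}$ in the first term of $E(F^a1_iE^b)$ is in fact the correct one (the paper's displayed $1_{i-2}$ is a harmless slip, as either way the term visibly lies in the span).
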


\begin{proof}
It suffices to prove that the set in (a) spans, as the similar
property for the set (b) then follows by applying $\omega$. First
we claim that the span of the indicated elements is stable under left
multiplication by $E$, $F$, and all $1_j$. For $F$ this is obvious,
and it is equally clear for the $1_j$, as
\[
1_j(F^a 1_i E^b) = 
\begin{cases}
  F^a 1_i E^b & \text{ if } j+2a = i \\
  0 & \text{ otherwise.}
\end{cases}
\]
For left multiplication by $E$, we apply Lemma \ref{l:S-props}(c) to
obtain
\begin{align*}
E(F^a 1_i E^b) &= \left( F^a E + \sum_{i' \in \Z} [i'] \sum_{t=0}^{a-1}
F^{a-1-t} 1_{i'} F^t \right) 1_i E^b \\
 &= F^a 1_{i-2} E^{b+1} + \sum_{t=0}^{a-1} [i-2t] F^{a-1} 1_i E^b .
\end{align*}
Thus the claim is established.  The claim implies that the span is
stable under left multiplication by any element in $\Schur(n)$, hence
contains $\Schur(n) = \Schur(n) \cdot 1$. (Relation~\eqref{R1} implies
that the span contains the identity~$1$.)
\end{proof}

We will find bases for $\Schur(n)$ after studying its representations.
Let $M$ be an $\Schur(n)$-module. A vector $0 \ne m \in M$ is
\emph{maximal} if it is killed by $E$.  Any $0 \ne m \in M$ satisfying
$m = 1_i m$ ($i \in X(n)$) is a \emph{weight vector} of weight
$i$. Similarly, if $M$ is any $\Schur(n)$-bimodule then any $0 \ne m
\in M$ for which $m = 1_i m 1_j$ is a \emph{biweight vector} of
biweight $(i,j)$.

\begin{lem}\label{l:wt}
  Let $M \ne 0$ be an $\Schur(n)$-module. 
  \begin{enumerate}
  \item $M = \bigoplus_{i \in X(n)} 1_i M$ is the direct sum of its
    weight spaces.
  \item $M$ has a maximal vector, which is also a weight vector of
    some weight $i \in X(n)$.
  \item If $M$ is an $\Schur(n)$-bimodule then $M =\bigoplus_{i, j \in
    X(n)} 1_i M 1_j$.
  \end{enumerate}
\end{lem}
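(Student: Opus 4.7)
My plan is to derive all three parts directly from the idempotent decomposition in relation \eqref{R1} together with the nilpotency of $E$ established in Lemma~\ref{l:S-props}(b).

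For part (a), the relations $1_i 1_j = \delta_{ij} 1_i$ and $\sum_{i \in X(n)} 1_i = 1$ do all the work. Given $m \in M$, writing $m = 1\cdot m = \sum_{i \in X(n)} 1_i m$ shows $M = \sum_{i \in X(n)} 1_i M$. To see the sum is direct, suppose $\sum_{i \in X(n)} m_i = 0$ with $m_i = 1_i m_i$; multiplying on the left by $1_j$ and using orthogonality of the idempotents gives $m_j = 1_j m_j = 0$ for each $j \in X(n)$.

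For part (b), part (a) furnishes a nonzero weight vector $w \in 1_i M$ for some $i \in X(n)$. By relation \eqref{R3'}, for each $k \ge 0$ the vector $E^k w$ lies in $1_{i+2k} M$ (or is zero), hence is either zero or a weight vector of weight $i+2k$. Since $E^{n+1} = 0$ by Lemma~\ref{l:S-props}(b), the sequence $w, Ew, E^2w, \dots$ eventually terminates at zero. Let $k_0$ be the largest index for which $E^{k_0} w \ne 0$; then $E^{k_0} w$ is a maximal vector and it is a weight vector of weight $i + 2k_0 \in X(n)$.

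For part (c), the argument is just the two-sided version of (a). Any $m \in M$ satisfies $m = 1 \cdot m \cdot 1 = \sum_{i,j \in X(n)} 1_i m 1_j$, so $M = \sum_{i,j} 1_i M 1_j$. If $\sum_{i,j} m_{ij} = 0$ with $m_{ij} = 1_i m_{ij} 1_j$, multiplying on the left by $1_{i_0}$ and on the right by $1_{j_0}$ picks out $m_{i_0,j_0} = 0$ by the pairwise orthogonality of the $1_i$.

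There is no real obstacle; the only non-cosmetic input beyond \eqref{R1} is Lemma~\ref{l:S-props}(b), which guarantees that the chain of $E$-images of a weight vector must terminate and thus produces the desired maximal vector.
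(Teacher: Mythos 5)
Your proof is correct and follows the paper's argument in all essentials: parts (a) and (c) are verbatim the idempotent decomposition from \eqref{R1}, and part (b) rests on the same fact that $E$ raises weights. The only cosmetic difference is in (b), where the paper takes a nonzero vector in the weight space of largest weight (so $E$ sends it into a zero weight space), while you iterate $E$ on an arbitrary weight vector and invoke the nilpotency $E^{n+1}=0$ to terminate --- both are valid and equally short.
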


\begin{proof}
(a) $M = \textstyle \sum_{i \in X(n)} 1_i M$ by the second part of
  relation~\eqref{R1}. The first part of~\eqref{R1} ensures that the
  sum is direct.
  
(b) Let $m$ be any nonzero vector in the weight space $1_i M$ of
  largest possible weight $i$ (so that $1_{i'}M = 0$ for all $i' >
  i$). Then $m = 1_i m$, so $E m = E1_i m = 1_{i+2}Em = 0$.

(c) is proved similarly to (a).
\end{proof}

In particular, part (c) of the preceding lemma applies to $\Schur(n)$,
regarded as a module over itself, so we have the biweight decomposition
\begin{equation}
  \Schur(n) = \bigoplus_{i,j \in X(n)} 1_i \Schur(n) 1_j .
\end{equation}
From Lemma \ref{l:S-props}(a) it follows that for any $i \in X(n)$,
$a,b \ge 0$,
\begin{equation}\label{e:biweights}
\begin{aligned}
  F^a 1_i E^b &= 1_{i-2a} F^a 1_i E^b 1_{1-2b} = 1_{i-2a} F^a E^b
  1_{1-2b} \\ E^a 1_i F^b &= 1_{i+2a} E^a 1_i F^b 1_{1+2b} =
  1_{i+2a} E^a F^b 1_{1+2b} .
\end{aligned}
\end{equation}
This shows that $F^a 1_i E^b = 0$ unless $i-2a$, $i-2b$ both belong to
$X(n)$; similarly, $E^a 1_i F^b = 0$ unless $i+2a$, $i+2b$ both belong
to $X(n)$.

Now consider the two-sided ideal $\Schur(n) 1_n \Schur(n)$ generated
by the idempotent $1_n$ indexed by the largest element of $X(n)$.  We
have a bimodule isomorphism
\begin{equation}
  \Schur(n) 1_n \otimes 1_n \Schur(n) \cong \Schur(n) 1_n \Schur(n)
\end{equation}
given by multiplication: $x1_n \otimes 1_ny \mapsto x1_ny$. As $1_n$
is not zero and is killed by $E$, it is a maximal vector in the left
ideal $\Schur(n) 1_n$.  (Note that $1_n$ is the unique idempotent
generator with those properties.) The morphisms $\sigma$, $\omega$
commute, and we set
\[
x^* = \omega \sigma(x) = \sigma \omega(x), \quad\text{for any $x$ in
  $\Schur(n)$}.
\]
Then $*$ is an anti-involution on $\Schur(n)$. It interchanges $E,F$
and fixes all the $1_i$ ($i \in X(n)$). Moreover,
\begin{equation}
  (\Schur(n)1_n)^* = 1_n \Schur(n).
\end{equation}
In other words, $*$ interchanges the ideals $\Schur(n)1_n$,
$1_n\Schur(n)$. We define the notation
\[
V(n) := \Schur(n)1_n, \quad V(n)^* := 1_n\Schur(n).
\]
Regard these ideals as left and right $\Schur(n)$-modules, respectively.

\begin{lem}\label{l:simple}
  The set $\{x_a := F^a 1_n: 0 \le a \le n\}$ is a basis of the
  $\Schur(n)$-module $V(n)$. Thus it has dimension $n+1$. Furthermore,
  for any $0 \le a \le n$, $i \in X(n)$,
  \begin{align*}
    F x_a  =
    \begin{cases}
      x_{a+1} & \text{if } a < n\\
      0 &\text{if } a = n,
    \end{cases}
    &\qquad 
    E x_a =
    \begin{cases}
      [n-a+1][a] x_{a-1} &\text{if } a>0\\
      0 & \text{if } a=0,
    \end{cases}\\
    1_i x_a &= \delta_{i,n-2a} x_a.
  \end{align*}
  If $[n]! \ne 0$ then $V(n)$ is a simple $\Schur(n)$-module.
\end{lem}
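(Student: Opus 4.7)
My plan is to work out spanning and the action formulas directly in $\Schur(n)$, then deduce linear independence by constructing an explicit $(n+1)$-dimensional representation $M$ realizing the stated formulas, and finally derive simplicity from a weight-space argument. By Lemma~\ref{l:span}(a), $V(n)=\Schur(n)1_n$ is spanned by elements $F^a 1_i E^b 1_n$. Relation~\eqref{R3'} gives $E1_n = 1_{n+2}E = 0$ since $n+2\notin X(n)$, so only $b=0$ survives and $F^a 1_i 1_n = \delta_{i,n}F^a 1_n$. Combined with $F^{n+1}=0$ from Lemma~\ref{l:S-props}(b), this shows $\{x_a : 0\le a\le n\}$ spans. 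The action of $F$ is immediate, while Lemma~\ref{l:S-props}(a) yields $1_i x_a = F^a 1_{i+2a}1_n = \delta_{i,n-2a}x_a$. For $E$, I apply Lemma~\ref{l:S-props}(c) to $F^a 1_n$, kill the $F^a E 1_n$ term, and use $1_{i'}F^t 1_n = \delta_{i',n-2t}F^t 1_n$ to collapse the double sum to $Ex_a = \bigl(\sum_{t=0}^{a-1}[n-2t]\bigr)x_{a-1}$; a short induction on $a$ identifies this coefficient with $[n-a+1][a]$.

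The main obstacle is the linear independence (and dimension) claim, since every step above is purely formal within $\Schur(n)$ and gives no guarantee that the $x_a$ are nonzero. To handle it, I would construct an $(n+1)$-dimensional vector space $M$ with basis $y_0,\dots,y_n$ and operators $E$, $F$, $1_i$ defined by the same formulas. Verifying the defining relations \eqref{R1}--\eqref{R4} on $M$ is routine except for \eqref{R2}, which reduces to the quantum identity $[n-a][a+1] - [n-a+1][a] = [n-2a]$ (provable by expanding in $v$, or by induction using $[k+1]+[k-1]=[2][k]$). Once $M$ is an $\Schur(n)$-module, the map $1_n\mapsto y_0$ extends to a surjection $V(n)\twoheadrightarrow M$ carrying $x_a$ to $y_a$, forcing the $x_a$ to be linearly independent, and hence $\dim V(n)=n+1$.

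For simplicity, assume $[n]!\ne 0$, so every $[k]$ with $1\le k\le n$ is nonzero, and let $W\ne 0$ be a submodule of $V(n)$. Because the $x_a$ have pairwise distinct weights $n-2a$, each weight space of $V(n)$ is one-dimensional. By Lemma~\ref{l:wt}(a,b), $W$ contains a maximal vector which is also a weight vector, hence a nonzero scalar multiple of some $x_a$. The formula $Ex_a=[n-a+1][a]x_{a-1}$ together with the hypothesis forces $a=0$, so $x_0=1_n\in W$; applying $F^a$ then puts $x_a\in W$ for every $a$, yielding $W=V(n)$.
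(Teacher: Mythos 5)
Your proof is correct, and for the action formulas and the simplicity argument it runs exactly parallel to the paper's: the paper also computes $Ex_a$ by applying Lemma~\ref{l:S-props}(c) to $F^a1_n$, collapses the double sum using the weight constraint, and invokes the identity $\sum_{t=0}^{a-1}[n-2t]=[a][n+1-a]$; simplicity is likewise dispatched via Lemma~\ref{l:wt} together with $[n]!\ne0$. Where you genuinely diverge is on the basis claim. The paper's proof is silent on why the $x_a$ are nonzero and linearly independent --- it treats this as obvious --- whereas you correctly identify that, since $\Schur(n)$ is given by generators and relations, nothing a priori prevents $F^a1_n$ from vanishing or the left ideal $\Schur(n)1_n$ from being smaller than expected. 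Your remedy (build an explicit $(n+1)$-dimensional module $M$ realizing the stated formulas, check the relations --- the only nontrivial one being \eqref{R2}, which reduces to $[n-a][a+1]-[n-a+1][a]=[n-2a]$ --- and map $V(n)$ onto $M$ via $x1_n\mapsto xy_0$, well-defined because $1_ny_0=y_0$) is the standard Verma-module-style argument and it does fill a real gap in the written proof. The paper buys brevity by omitting this; your version buys a self-contained lower bound on $\dim V(n)$ that the rest of the paper (e.g.\ the dimension count in Theorem~\ref{t:S-ss} and Corollary~\ref{c:path-mult}) implicitly relies on. One small point worth making explicit when you verify \eqref{R2} on $M$: check the boundary cases $a=0$ and $a=n$ separately, where one of $EF$, $FE$ acts as zero and the identity degenerates to $[n][1]=[n]$ and $-[1][n]=[-n]$ respectively.
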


\begin{proof}
The formulas for the action of $F$ and $1_i$ are obvious.  By 
Lemma~\ref{l:S-props}(c) we have
\[
E x_a = EF^a 1_n = (F^aE + \textstyle \sum_{j \in \Z} \sum_{t=0}^{a-1}
[j] 1_{j-2(a-1-t)} F^{a-1}) 1_n.
\]
In the double sum on the right hand side, the only nonzero terms occur
when $j - 2(a-1-t) = n-2(a-1)$, which is true if and only if $j+2t =
n$. Also, $F^aE$ acts as zero on $1_n$. Thus the above simplifies to
\[
E x_a = \textstyle \sum_{t=0}^{a-1} [n-2t] F^{a-1} 1_n = [a][n+1-a] x_{a-1}
\]
as required. The needed identity $\sum_{t=0}^{a-1} [n-2t] =
[a][n+1-a]$ is verified directly from the definition of quantum
integer. If $[n]! \ne 0$ the simplicity of $V(n)$ follows by Lemma
\ref{l:wt}. 
\end{proof}

It follows immediately that $\{x'_a := 1_n E^a : 0 \le a \le n\}$ is a
basis for the simple right ideal $V(n)^*$, satisfying analogous
formulas for the right action of the generators $E$, $F$, $1_i$.

\begin{rmk}
Lemma \ref{l:simple} may be generalized as follows. Observe that $1_n
E^j$ is a maximal vector, for any $0 \le j \le n$. Furthermore, for
any fixed $j$, the span of $\{F^i1_nE^j: 0 \le i \le n\}$ is a basis
for an $\Schur(n)$-module isomorphic to $V(n)$. The isomorphism is
given by $x_i = F^i 1_n \mapsto F^i1_nE^j$ for all $0 \le i \le n$. If
$[n]! \ne 0$ then $\Schur(n) 1_n \Schur(n)$ is the direct sum of these
submodules.
\end{rmk}

We need to introduce some additional notation. Assuming that $[n]! \ne
0$, we define the divided powers
\[
  E^{(a)} := \frac{E^a}{[a]!}, \quad F^{(a)} := \frac{F^a}{[a]!}
\]
for any integer $0 \le a \le n$.  Note that $E^{(0)} = F^{(0)} =
1$. As $E^m = F^m = 0$ for any $m > n$ it is convenient to also define
$E^{(m)} = F^{(m)} = 0$ for all $m > n$. For any $a \in \Z$ we define
the quantum binomial coefficient
\[
\qbinom{a}{n} = \frac{[a][a-1]\cdots [a-n+1]}{[n]!}.
\]
Notice that $\qbinom{a}{n} = 0$ if $0 \le a < n$ and $\qbinom{-a}{n} =
(-1)^n\qbinom{-a+n-1}{n}$ for all $a \in \Z$. At $v=1$, $\qbinom{a}{n}
= \binom{a}{n}$ is the classical binomial coefficient. Starting from
formulas in Lemma \ref{l:S-props}(c), (d) we obtain the identities
\begin{equation}
\begin{aligned}\label{e:Lu}
  E^{(a)} 1_{-i} F^{(b)} &= \sum_{t \ge 0} \qbinom{a+b-i}{t} F^{(b-t)}
  1_{-i+2(a+b-t)} E^{(a-t)} \\ F^{(b)} 1_i E^{(a)} &= \sum_{t \ge 0}
  \qbinom{a+b-i}{t} E^{(a-t)} 1_{i-2(a+b-t)} F^{(b-t)}
\end{aligned}
\end{equation}
by induction on $b$, for any $i \in X(n)$. Actually, it is enough to
prove one of them, as we get the other by applying $\omega$. (These
formulas may be compared with similar formulas in
\cite{Lusztig}*{23.1.3}.)

\begin{lem}
  If $[n]! \ne 0$ then $1_{-n} = F^{(n)} 1_n E^{(n)}$ and
  $1_n = E^{(n)} 1_{-n} F^{(n)}$. In particular, $1_{-n}$ belongs to
  $\Schur(n) 1_n \Schur(n)$ and $1_n$ belongs to $\Schur(n) 1_{-n}
  \Schur(n)$.
\end{lem}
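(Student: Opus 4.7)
The plan is to apply the commutation formulas in \eqref{e:Lu} directly. These identities are stated just before the lemma, so the natural strategy is to specialize them at the appropriate indices and observe that almost every term on the right-hand side vanishes.

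To prove $F^{(n)} 1_n E^{(n)} = 1_{-n}$, I would apply the second formula in \eqref{e:Lu} with $a = b = n$ and $i = n$, so that $a+b-i = n$. This yields
\[
F^{(n)} 1_n E^{(n)} = \sum_{t \ge 0} \qbinom{n}{t}\, E^{(n-t)} 1_{-3n+2t} F^{(n-t)}.
\]
Now I would check which summands actually contribute. By the convention that $E^{(m)} = F^{(m)} = 0$ for $m>n$ (and for $m<0$), the divided powers $E^{(n-t)}$, $F^{(n-t)}$ force $0 \le t \le n$. On the other hand, $1_{-3n+2t} \ne 0$ requires $-3n+2t \in X(n)$, i.e., $-n \le -3n+2t \le n$, which forces $t \ge n$. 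The only surviving index is therefore $t = n$, and the corresponding summand is $\qbinom{n}{n} E^{(0)} 1_{-n} F^{(0)} = 1_{-n}$, which is exactly the claim.

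For the dual identity $1_n = E^{(n)} 1_{-n} F^{(n)}$, I would run the same argument using the first formula in \eqref{e:Lu} with $a = b = n$, $i = n$; alternatively, apply the involution $\omega$ to the identity just established, noting that $\omega(E) = F$, $\omega(F) = E$, and $\omega(1_i) = 1_{-i}$. The two containment assertions at the end of the lemma are then immediate: the first expression exhibits $1_{-n}$ as an element of the two-sided ideal $\Schur(n) 1_n \Schur(n)$, and the second exhibits $1_n$ as an element of $\Schur(n) 1_{-n} \Schur(n)$.

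There is no serious obstacle. The only point requiring attention is the simultaneous bookkeeping of two separate vanishing conditions (divided powers out of range, and idempotent subscript outside $X(n)$), and checking that together they pin down a unique nonzero term in the sum.
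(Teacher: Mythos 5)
Your proof is correct and follows the paper's argument exactly: specialize the second formula of \eqref{e:Lu} at $a=b=i=n$, observe that the sum collapses to the single term $t=n$ equal to $1_{-n}$, and obtain the dual identity by applying $\omega$ (or the first formula). You simply make explicit the bookkeeping that the paper summarizes as ``collapses to a single nonzero term,'' and that bookkeeping is carried out correctly.
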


\begin{proof}
Set $a=b=i = n$ in the second formula in \eqref{e:Lu}, to obtain the
identity
\[
F^{(n)} 1_n E^{(n)} = \sum_{t \ge 0} \qbinom{n}{t} E^{(n-t)} 1_{-3n+2t}
F^{(n-t)} .
\]
The right hand side collapses to a single nonzero term (at $t=n$),
which is equal to $1_{-n}$. This proves the first identity. The second
identity follows similarly from the first formula in \eqref{e:Lu}, or
one may simply apply $\omega$ to the first identity.
\end{proof}

By imposing the conditions $1_n = 0 = 1_{-n}$ in the defining
presentation for $\Schur(n)$, we obtain the defining presentation for
$\Schur(n-1)$. More precisely, there is a quotient map $\varphi_n:
\Schur(n) \to \Schur(n-2)$ defined by $E \mapsto E$, $F \mapsto F$,
and $1_i \mapsto 1_i$ if $|i| \le n-2$, $1_{i} \mapsto 0$ if $|i|=n$.

Let $X(n)_+ = \{m \in X(n): m \ge 0\}$. For any $m$ in $X(n)_+$, we
get (from the above) a composite algebra morphism
\begin{equation}\label{e:qout-map}
\varphi_{n-2(t-1)} \cdots \varphi_{n-2} \varphi_n: \Schur(n) \to \Schur(m)
\end{equation}
where $t \ge 0$ is defined by $n-2t = m$. If $[n]! \ne 0$ then of
course $[m]! \ne 0$ for any $2 \le m \le n$. By Lemma~\ref{l:simple},
the module $V(m)$ is simple when regarded as a $\Schur(n)$-module by
composing the $\Schur(m)$-action with the map \eqref{e:qout-map}. In
this way we have constructed a family
\[
  \{V(m) : m \in X(n)_+\}
\]
of simple $\Schur(n)$-modules. The following is proved by induction on
the elements of $X(n)_+$.

\begin{thm}\label{t:S-ss}
Assume that $0 \ne v \in \Bbbk$ satisfies $[n]! \ne 0$.
\begin{enumerate}
  \item The set $\{F^{(a)} 1_n E^{(b)} : 0 \le a,b \le n\}$ is a basis
    for the ideal $\Schur(n) 1_n \Schur(n)$.
  \item There is an exact sequence $0 \to \Schur(n) 1_n \Schur(n) \to
    \Schur(n) \to \Schur(n-2) \to 0$, that is,
    $\Schur(n)/(\Schur(n)1_n\Schur(n)) \cong \Schur(n-2)$.
  \item The algebra $\Schur(n)$ is split semisimple over $\Bbbk$, and
    $\{V(m): m \in X(n)_+\}$ is a complete set of simple
    $\Schur(n)$-modules, up to isomorphism.
  \item The set $\sqcup_{m \in X(n)_+} \{F^{(a)} 1_m E^{(b)} : 0 \le a,b
    \le m\}$ is a basis for $\Schur(n)$.
  \item $\dim_\Bbbk \Schur(n) = \sum_{m \in X(n)_+} (m+1)^2 =
    \binom{n+3}{3}$.
\end{enumerate}
\end{thm}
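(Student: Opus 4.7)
The plan is to proceed by strong induction on $n$, simultaneously establishing parts (a)--(e) in the inductive step. The base cases $n=0$ and $n=1$ are handled by direct inspection of the presentation: $\Schur(0)\cong\Bbbk$ and $\dim\Schur(1)=4$, matching both the claimed basis and the claimed Wedderburn decomposition. Fix $n\ge 2$, assume the theorem for $\Schur(n-2)$, and set $I:=\Schur(n)1_n\Schur(n)$.

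First I would prove (b). The preceding lemma gives $1_{-n}\in I$, so the two-sided ideal $\langle 1_n,1_{-n}\rangle$ coincides with $I$. I would then verify that imposing $1_n=0=1_{-n}$ in the defining relations \eqref{R1'}--\eqref{R3'} of $\Schur(n)$ yields exactly the defining relations of $\Schur(n-2)$: the identity $\sum 1_i=1$ restricts correctly, the commutator $EF-FE=\sum[i]1_i$ loses the $[\pm n]1_{\pm n}$ terms, and the boundary relations $E1_{n-2}=1_n E$, $F1_{-(n-2)}=1_{-n}F$ collapse to the $\Schur(n-2)$ conventions $E1_{n-2}=0=F1_{-(n-2)}$. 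Combined with the given surjection $\varphi_n$, this produces mutually inverse algebra maps between $\Schur(n)/I$ and $\Schur(n-2)$.

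Next I would squeeze $\dim\Schur(n)$ between matching bounds. For the upper bound, the multiplication map $\Schur(n)1_n\otimes_\Bbbk 1_n\Schur(n)\twoheadrightarrow I$ is surjective, and by Lemma~\ref{l:simple} each tensor factor has dimension $n+1$; hence $\dim I\le(n+1)^2$ with $I$ spanned by the divided-power elements $F^{(a)}1_nE^{(b)}$ ($0\le a,b\le n$). Combining with (b) and the inductive form of (e) gives $\dim\Schur(n)\le(n+1)^2+\binom{n+1}{3}=\binom{n+3}{3}$. For the matching lower bound, each $V(m)$ ($m\in X(n)_+$) is an $\Schur(n)$-module via the composite \eqref{e:qout-map} and is absolutely simple---its weight-$m$ space is one-dimensional and cyclic, forcing $\End_{\Schur(n)}(V(m))=\Bbbk$---while distinct $V(m)$ are pairwise non-isomorphic by dimension. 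The standard Artin--Wedderburn inequality then yields $\dim\Schur(n)\ge\sum_{m\in X(n)_+}(m+1)^2=\binom{n+3}{3}$.

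Both bounds coincide, so all inequalities are forced to be equalities and the rest of the theorem follows in one stroke: (e) is the matching dimension; (a) follows because the $(n+1)^2$ spanning elements for $I$ must be linearly independent; saturation of the Artin--Wedderburn bound gives $\Schur(n)\cong\prod_{m\in X(n)_+}M_{m+1}(\Bbbk)$, proving split semisimplicity with exactly the $V(m)$ as simples, which is (c); and (d) is obtained by lifting the inductive basis of $\Schur(n-2)$ through the isomorphism of (b) and adjoining the basis of $I$ from (a). I expect the main obstacle to be the presentation bookkeeping in (b): carefully verifying that no commutation relation at the boundary indices $\pm n$ produces a spurious extra relation in $\Schur(n)/I$ not present in $\Schur(n-2)$. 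Once that is handled, the remainder is a clean dimension count.
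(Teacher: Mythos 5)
Your proposal is correct and follows essentially the route the paper intends: the paper only remarks that the theorem "is proved by induction on the elements of $X(n)_+$," and the scaffolding it sets up beforehand (the spanning set of Lemma~\ref{l:span}, the simple module $V(n)$ of Lemma~\ref{l:simple}, the fact that $1_{-n}\in\Schur(n)1_n\Schur(n)$, and the quotient maps $\varphi_n$) is exactly what you deploy. Your dimension squeeze between the upper bound $(n+1)^2+\dim\Schur(n-2)$ and the Artin--Wedderburn lower bound $\sum_{m\in X(n)_+}(m+1)^2$ is the right way to close the induction, so this is a faithful fleshing-out of the paper's sketch rather than a different proof.
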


The algebra $\Schur(n)$ is not a bialgebra (the category of
$\Schur(n)$-modules is not closed under tensor products). However,
we have the following.

\begin{thm}\label{t:tensors}
For any nonnegative integers $r,s$ there is a unique algebra morphism
$\Delta: \Schur(r+s) \to \Schur(r) \otimes \Schur(s)$ such that
\begin{enumerate}
\item $\Delta(E) = E \otimes 1 + \sum_{i \in X(r)} v^i 1_i \otimes E =
  E \otimes 1 + K \otimes E$.
\item $\Delta(F) = F \otimes \sum_{i \in X(s)} v^{-i} 1_i + 1 \otimes
  F = F \otimes K^{-1} + 1 \otimes F$.
\item $\Delta(1_i) = \sum_{i'+i'' = i} 1_{i'} \otimes 1_{i''}$, where
  $i' \in X(r)$, $i'' \in X(s)$ in the sum. 
\end{enumerate}
\end{thm}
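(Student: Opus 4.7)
The plan is to handle uniqueness first, then existence. Uniqueness is immediate: since $E$, $F$, and the idempotents $\{1_i : i \in X(r+s)\}$ generate $\Schur(r+s)$ as an algebra, any algebra morphism out of $\Schur(r+s)$ is determined by its values on these generators. So all of the work lies in existence---verifying that the formulas in (a)--(c) define elements of $\Schur(r) \otimes \Schur(s)$ that satisfy the defining relations (R$1'$)--(R$3'$) of $\Schur(r+s)$, so that the assignment extends to an algebra morphism by the universal property of the presentation.

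Relation (R$1'$) for the $\Delta(1_i)$ is immediate from the orthogonality and completeness of the idempotents $\{1_{i'}\otimes 1_{i''} : i' \in X(r),\, i'' \in X(s)\}$ in the tensor product: in particular $\sum_{i \in X(r+s)} \Delta(1_i) = \bigl(\sum_{i'} 1_{i'}\bigr)\otimes \bigl(\sum_{i''} 1_{i''}\bigr) = 1\otimes 1$. For the commutation relations (R$3'$), I would expand $\Delta(E)\Delta(1_i)$ and $\Delta(1_{i+2})\Delta(E)$, use $E 1_{i'} = 1_{i'+2}E$ and $K 1_{i'} = v^{i'} 1_{i'}$, and reindex the two resulting sums to match them termwise; any case in which an index falls outside $X(r)$ or $X(s)$ is absorbed by the convention $1_j = 0$ for $j \notin X$. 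The analogous calculation with $F$ handles the second half of (R$3'$).

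The main step is (R$2'$): $\Delta(E)\Delta(F) - \Delta(F)\Delta(E) = \sum_i [i]\Delta(1_i)$. Expanding both products yields four summands each. The ``diagonal'' contributions $EF\otimes K^{-1}$, $FE\otimes K^{-1}$, $K\otimes EF$, $K\otimes FE$ assemble into $(EF-FE)\otimes K^{-1} + K\otimes (EF-FE)$. The two remaining cross terms are $KF\otimes EK^{-1}$ (from $\Delta(E)\Delta(F)$) and $FK\otimes K^{-1}E$ (from $\Delta(F)\Delta(E)$); the commutation identities $KF = v^{-2}FK$ and $EK^{-1} = v^{2}K^{-1}E$, both immediate from (R$3'$), make these two terms coincide and therefore cancel in the difference. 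Applying \eqref{R2} inside each tensor factor and then comparing coefficients of $1_{i'}\otimes 1_{i''}$ on both sides, the identity reduces to the quantum integer identity
\[
[i'+i''] = v^{i'}[i''] + v^{-i''}[i'],
\]
valid for all $i', i'' \in \Z$ and at every $v$ (including $v=1$, where it becomes $i'+i'' = i'+i''$), which is a direct check from the defining formula for $[n]$.

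The main obstacle is the (R$2'$) verification: one must both see that the cross terms cancel via the $K$-commutation relations and then match the surviving terms with $\sum_i [i]\Delta(1_i)$ through the displayed quantum integer identity. Everything else is routine manipulation of the presentation, and once (R$1'$)--(R$3'$) have been checked, the theorem follows from the universal property.
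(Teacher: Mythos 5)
Your proposal is correct and follows the same route as the paper's (sketched) proof: check the defining relations (R$1'$)--(R$3'$) on the images of the generators and invoke the universal property of the presentation, with the key step for (R$2'$) — cancellation of the cross terms via $KF = v^{-2}FK$ and $EK^{-1}=v^{2}K^{-1}E$, followed by the quantum-integer identity $[i'+i'']=v^{i'}[i'']+v^{-i''}[i']$ — worked out exactly as needed. The only slip is bookkeeping: each of $\Delta(E)\Delta(F)$ and $\Delta(F)\Delta(E)$ has four summands, so besides the six terms you list there are also $(E\otimes 1)(1\otimes F)=E\otimes F=(1\otimes F)(E\otimes 1)$, which cancel identically in the commutator and do not affect the argument.
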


\begin{proof}[Proof sketch]
Verify that the elements $(\Delta(E)$, $\Delta(F)$, $\Delta(1_i)$
satisfy the defining relations \eqref{R1}--\eqref{R4} or their
equivalent form \eqref{R1'}--\eqref{R3'} for $\Schur(r+s)$. For
instance, to verify the second part of~\eqref{R1}, expand $1 \otimes
1$ in $\Schur(r) \otimes \Schur(s)$ using the second part of that
axiom for each algebra, and compare with $\sum_{i \in X(r+s)}
\sum_{i'+i'' = i} 1_{i'} \otimes 1_{i''}$.
\end{proof}

There is a unique algebra morphism $\epsilon: \Schur(n) \to \Bbbk$
such that $\epsilon(E) = \epsilon(F) = 0$, $\epsilon(1_i) =
\delta_{0,i}$. If $n$ is even, $\epsilon$ takes $K$ to $1$ (and also
$1$ to $1$) and makes $\Bbbk$ into an $\Schur(n)$-module (the trivial
module, isomorphic to $V(0)$). On the other hand, if $n$ is odd,
$\epsilon = 0$ and thus $\Schur(n)$ doesn't have a trivial module.

In the semisimple case, every finite dimensional $\Schur(n)$-module is
isomorphic to a direct sum of simple modules. This decomposition is
uniquely determined by the character of the module (the formal linear
combination of each of its weight space dimension times a placeholder
function indexed by the weight). From this it is easy to decompose
tensor products (in the semisimple case) and verify simple facts such
as the Pieri rule.

\begin{rmk}
Although we have chosen to give an elementary self-contained account
of the algebras $\Schur(n)$ in this section, the results are closely
related to the papers \cites{DG:rk1,DG:qrk1} for rank $1$ and
\cites{DG:ann,DG:IMRN,Doty:RT} more generally. The inverse limit
\[
\widehat{\UU} := \textstyle \varprojlim_{n\ge 0} \Schur(n)
\]
contains well-defined elements $E$, $F$, $K^{\pm 1}$, $1_i$ ($i \in
\Z$).  The quantized enveloping algebra $\UU = \UU_v(\fsl_2)$ is
isomorphic to the subalgebra of $\widehat{\UU}$ generated by $E$, $F$,
and $K^{\pm 1}$ and its modified form $\dot{\UU}$ (see \cite{Lusztig})
is isomorphic to the subalgebra generated by $E$, $F$ and the $1_i$
($i \in \Z$).

The Schur algebra $\Schur(n)$ is isomorphic to the quotient of
$\dot{\UU}$ by the algebra morphism $\dot{\UU} \to \Schur(n)$ taking
$E \mapsto E$, $F \mapsto F$, $1_i \mapsto 1_i$ if $i \in X(n)$ and
$1_i \mapsto 0$ otherwise. $\Schur(n)$ is also isomorphic to the
quotient of $\UU$ defined by $E \mapsto E$, $F \mapsto F$, $K \mapsto
K$. See \cite{D:invlim} for further details.
\end{rmk}

\section{Orthogonal maximal vectors}\label{s:orthmv}\noindent
Assume that $\Bbbk$ is a field and $0 \ne v$ is an element of $\Bbbk$.
In this section we construct the pairwise orthogonal maximal vectors
$\omax(p)$, under the assumption $[n]! \ne 0$. This leads to an
orthogonal basis for the simple $\TL_n(\delta)$-modules, where $\delta
= \pm(v+v^{-1})$. From this it is easy to prove semisimplicity of
$\TL_n(\delta)$.

Recall the module $V(m)$ constructed in the preceding section, with
its basis $\{x_0, x_1, \dots, x_m\}$. We change notation slightly, setting
\begin{equation}
  y_{m-2i} := x_i,  \quad \text{ for all } 0 \le i \le m. 
\end{equation}
In this notation, the basis $\{x_0, x_1, \dots, x_m\}$ becomes $\{y_k
: k \in X(m)\}$, where $y_k$ has weight $k$ for each $k$. Set
\begin{equation}
  V := V(1), \quad \text{with basis } \{y_1, y_{-1}\}.
\end{equation}
Note that $V$ is a simple $\Schur(1)$-module.  By
Theorem~\ref{t:tensors}, its $n$-fold tensor power $V^{\otimes n} = V
\otimes \cdots \otimes V$ is an $\Schur(n)$-module.  It is convenient
to introduce the set
\[
\Lambda(n) = \{(\lambda_1, \lambda_2): \lambda_1 \ge \lambda_2 \ge 0,
\lambda_1+\lambda_2 = n \}
\]
of partitions of $n$ with not more than two parts. (It is customary to
omit writing zero parts of partitions.) There is a bijection
$\Lambda(n) \to X(n)_+$ given by the rule $(\lambda_1,\lambda_2)
\mapsto \lambda_1 - \lambda_2$. We will interchangeably write
$V(\lambda)$ for the $\Schur(n)$-module that was previously denoted by
$V(m)$, where $m = \lambda_1-\lambda_2$ is an element of $X(n)_+$.

Starting from the empty partition $\emptyset$, we inductively
construct a graph, the Bratteli diagram, as follows. The nodes of the
graph are partitions of at most two parts, with the elements of
$\Lambda(l)$ in level $l$ ordered by reverse dominance. We identify
partitions with their Young diagrams as usual. Put the empty partition
at level zero. Edges are drawn between nodes whose Young diagrams
differ by exactly one box. The resulting graph is infinite; we display
its first six levels in Figure~\ref{Bratteli}.

%%%%%%%%%%%%%%%%%%%%%%%%%%%%%%%
%% BEGIN: Bratteli Diagram
%%
\begin{figure}[ht]
\begin{center}
\begin{tikzpicture}[xscale=4*\UNIT, yscale=-3*\UNIT]
%Sets the coordinates for the partitions up to level 3:
  \coordinate (0) at (0,0);
  \foreach \x in {0,...,0}{\coordinate (1\x) at (2*\x,2);}
  \foreach \x in {0,...,1}{\coordinate (2\x) at (2*\x,4);}
  \foreach \x in {0,...,1}{\coordinate (3\x) at (2*\x,6);}
  \foreach \x in {0,...,2}{\coordinate (4\x) at (2*\x,8);}
  \foreach \x in {0,...,2}{\coordinate (5\x) at (2*\x,10);}
  \foreach \x in {0,...,3}{\coordinate (6\x) at (2*\x,12);}

%edges in lattice:
  \draw (0)--(10);

  \draw (10)--(20) (10)--(21);

  \draw (20)--(30) (20)--(31) (21)--(31);

  \draw (30)--(40) (30)--(41) (31)--(41) (31)--(42);

  \draw (40)--(50) (40)--(51) (41)--(51) (41)--(52) (42)--(52);

  \draw (50)--(60) (50)--(61) (51)--(61) (51)--(62) (52)--(62) (52)--(63);
  
%partitions in lattice:
\def\NULL{\footnotesize$\emptyset$}
\begin{scope}[every node/.style={fill=white}]
  \node at (0) {\NULL};

  \node at (10) {\PART{1}};

  \node at (20) {\PART{2}};
  \node at (21) {\PART{1,1}};

  \node at (30) {\PART{3}};
  \node at (31) {\PART{2,1}};

  \node at (40) {\PART{4}};
  \node at (41) {\PART{3,1}};
  \node at (42) {\PART{2,2}};

  \node at (50) {\PART{5}};
  \node at (51) {\PART{4,1}};
  \node at (52) {\PART{3,2}};

  \node at (60) {\PART{6}};
  \node at (61) {\PART{5,1}};
  \node at (62) {\PART{4,2}};
  \node at (63) {\PART{3,3}};
\end{scope}
\end{tikzpicture}
\end{center}
\caption{Bratteli diagram up to level $6$}\label{Bratteli}
\end{figure}
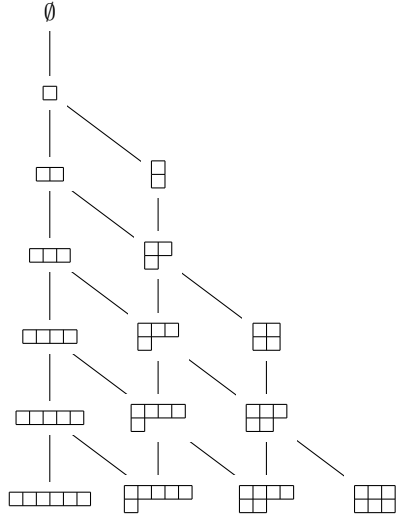
%%%%%%%%%%%%%%%%%%%%%%%%%%%%%%%%%

A \emph{walk} on the Bratteli diagram is a piecewise linear path from
$\emptyset$ to some node, consisting of vertical or diagonal line
segments respectively connecting a node on one level to the node
immediately below it or to the node immediately below and to the
right.  In such a walk, vertical (resp., diagonal) edges correspond to
adding a box to the first (resp., second) row of a partition.  For
each partition $\lambda$ with at most $2$ parts, let
$\mathcal{P}(\lambda)$ be the set of all walks to $\lambda$. The main
point of the Bratteli diagram is that it encodes the multiplicities of
the simple composition factors in the tensor powers of $V$.

\begin{lem}\label{l:path-mult}
Suppose that $[n]! \ne 0$, and let $\lambda$ be a partition of $n$
with at most two parts. The number $\cc_\lambda:=
|\mathcal{P}(\lambda)|$ of walks on the Bratteli diagram terminating
at the node $\lambda$ is equal to the multiplicity $[V^{\otimes
    n}: V(\lambda)]$ of $V(\lambda)$ in $V^{\otimes n}$.
\end{lem}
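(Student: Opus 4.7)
The plan is to argue by induction on $n$, with the engine being the semisimplicity provided by Theorem~\ref{t:S-ss} together with the Pieri rule
\[
  V(m) \otimes V \;\cong\; V(m+1) \oplus V(m-1) \qquad (m \ge 1),
  \qquad V(0) \otimes V \;\cong\; V(1).
\]
(Since $[n]! \ne 0$ implies $[k]! \ne 0$ for every $0 \le k \le n$, both $\Schur(n)$ and $\Schur(n-1)$ are split semisimple, and $V^{\otimes n}$ is a semisimple $\Schur(n)$-module.)

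For the base case $n=0$ we have $V^{\otimes 0} = \Bbbk$, which is the trivial $\Schur(0)$-module $V(\emptyset)$, and $\mathcal{P}(\emptyset)$ consists of a single (empty) walk. For the inductive step, I would write $V^{\otimes n} = V^{\otimes (n-1)} \otimes V$ and use Theorem~\ref{t:tensors} to view the right-hand side as an $\Schur(n)$-module via $\Delta \colon \Schur(n) \to \Schur(n-1) \otimes \Schur(1)$. Applying the Pieri rule to each simple constituent of $V^{\otimes(n-1)}$ yields
\[
  [V^{\otimes n} : V(\lambda)] \;=\; \sum_{\mu \to \lambda} [V^{\otimes(n-1)} : V(\mu)],
\]
where the sum runs over the partitions $\mu$ of $n-1$ (with at most two parts) such that $\lambda$ is obtained from $\mu$ by adding a single box, i.e., the parents of $\lambda$ in the Bratteli diagram. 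By the inductive hypothesis each summand equals $|\mathcal{P}(\mu)|$, and since every walk to $\lambda$ is uniquely obtained by extending a walk to exactly one parent $\mu$ of $\lambda$, the right-hand side equals $|\mathcal{P}(\lambda)| = \cc_\lambda$.

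The only ingredient that still needs justification is the Pieri rule itself, and this is the main obstacle (though a mild one). I would prove it by a character computation: by Lemma~\ref{l:simple} the weights of $V(m)$ are $m, m-2, \dots, -m$, each with multiplicity $1$, while those of $V$ are $\pm 1$. Multiplying characters shows that $V(m) \otimes V$ has the same weight multiplicities as $V(m+1) \oplus V(m-1)$ (with the convention $V(-1)=0$). Because $\Schur(n)$ is semisimple under our hypothesis, the character determines the module up to isomorphism, and the Pieri rule follows. Translating the highest weights back into partition language, adding a box to the first (resp.\ second) row of $\mu = (\mu_1,\mu_2)$ corresponds to passing from weight $\mu_1-\mu_2$ to weight $\mu_1-\mu_2+1$ (resp.\ $\mu_1-\mu_2-1$), and the condition $\mu_2+1 \le \mu_1$ needed for the second option to produce a valid partition is precisely the condition $m \ge 1$ needed for the $V(m-1)$ summand to appear. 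Thus the edges of the Bratteli diagram record exactly the Pieri decomposition, closing the induction.
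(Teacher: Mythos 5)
Your proposal is correct and follows essentially the same route as the paper: induction on $n$ via $V^{\otimes n} = V^{\otimes(n-1)}\otimes V$, with the Pieri rule $V(m)\otimes V \cong V(m+1)\oplus V(m-1)$ justified by the fact that in the semisimple case characters determine the decomposition (a point the paper records just before the lemma). Your write-up merely makes explicit the character computation and the partition/weight dictionary that the paper leaves to the reader.
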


\begin{proof}
This follows by induction on $n$. The base case is trivial. Assuming
the result for $n-1$, we apply a special case of the Pieri rule to
obtain the next case. The point is that the (formal) character of
$V(\mu) \otimes V$ is obtained by summing the characters of all the
$V(\lambda)$ such that $\lambda$ is obtained from $\mu$ by adding one
box.
\end{proof}

The following is an immediate consequence of Lemma~\ref{l:path-mult}. 

\begin{cor}\label{c:path-mult}
Suppose that $[n]! \ne 0$. Then $\sum_{\lambda \in \Lambda(n)}
\cc_\lambda \dim_\Bbbk V(\lambda) = \dim_\Bbbk V^{\otimes n}$.
\end{cor}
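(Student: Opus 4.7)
The plan is to combine semisimplicity of $\Schur(n)$ under the standing hypothesis $[n]! \ne 0$ with the multiplicity formula of Lemma~\ref{l:path-mult}. By Theorem~\ref{t:S-ss}(c), the algebra $\Schur(n)$ is split semisimple over $\Bbbk$, and $\{V(\lambda) : \lambda \in \Lambda(n)\}$ (identifying $\Lambda(n)$ with $X(n)_+$ via $\lambda \mapsto \lambda_1 - \lambda_2$) is a complete set of representatives of isomorphism classes of simple $\Schur(n)$-modules. Hence every finite-dimensional $\Schur(n)$-module, including $V^{\otimes n}$, decomposes as a direct sum of simple submodules, and for each simple $V(\lambda)$ the multiplicity of $V(\lambda)$ in this decomposition equals the composition multiplicity $[V^{\otimes n} : V(\lambda)]$.

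The first step I would carry out is to note that the weights of $V^{\otimes n}$ all lie in $X(n)$ (each tensor factor contributes a weight in $\{\pm 1\}$, and $\Delta(1_i) = \sum_{i'+i''=i} 1_{i'}\otimes 1_{i''}$ from Theorem~\ref{t:tensors} shows that all weights are congruent to $n$ mod~$2$ and bounded by $n$ in absolute value). This justifies restricting the index of summation to $\Lambda(n)$ rather than all of $X(n)_+$. The second step is to invoke semisimplicity to write
\[
V^{\otimes n} \;\cong\; \bigoplus_{\lambda \in \Lambda(n)} V(\lambda)^{\oplus [V^{\otimes n} : V(\lambda)]},
\]
and the third step is to substitute $[V^{\otimes n}:V(\lambda)] = \cc_\lambda$ from Lemma~\ref{l:path-mult}. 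Taking $\dim_\Bbbk$ on both sides yields the claimed identity.

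There is no serious obstacle; the work has already been done in Theorem~\ref{t:S-ss} and Lemma~\ref{l:path-mult}. The only minor point one needs to be careful about is the range of summation (parametrising by $\Lambda(n)$ versus $X(n)_+$), which is handled by the bijection $\lambda \mapsto \lambda_1-\lambda_2$ already spelled out in the section.
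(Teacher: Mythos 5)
Your proposal is correct and is exactly the argument the paper has in mind: the paper states the corollary as an ``immediate consequence'' of Lemma~\ref{l:path-mult}, the implicit mechanism being precisely the split semisimplicity of $\Schur(n)$ from Theorem~\ref{t:S-ss}(c) together with the identification of $\cc_\lambda$ with the multiplicity $[V^{\otimes n}:V(\lambda)]$, followed by taking dimensions. Your extra remark about the weights lying in $X(n)$ is harmless but not needed, since $\{V(m):m\in X(n)_+\}$ already exhausts the simple $\Schur(n)$-modules and $X(n)_+$ is in bijection with $\Lambda(n)$.
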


The next result is the key observation that enables us to construct
all the maximal vectors in $V^{\otimes n}$, under suitable hypotheses.

\begin{lem}\label{l:maximal}
Suppose that $b$ is a maximal vector in $V^{\otimes (m-1)}$ of weight
$i_b$.
\begin{enumerate}
\item $\Phi_1(b) = b \otimes y_1$ is a maximal vector of weight
  $i_b+1$.
\item If $i_b>0$ then $\Phi_2(b) = [i_b] b \otimes y_{-1} - v^{i_b} Fb
  \otimes y_1$ is a maximal vector of weight $i_b-1$.
\end{enumerate}
\end{lem}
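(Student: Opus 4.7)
The plan is to apply the formulas for the coproduct $\Delta$ given in Theorem~\ref{t:tensors} to compute the action of $E$ on each proposed vector directly. Recall that for any $u \in V^{\otimes(m-1)}$ and $y \in V$,
\[
E(u\otimes y) = Eu\otimes y + Ku\otimes Ey, \qquad F(u\otimes y) = Fu\otimes K^{-1}y + u\otimes Fy .
\]
Since $b$ is a maximal weight vector of weight $i_b$, we have $Eb=0$, $1_{i_b}b = b$, and $Kb = v^{i_b}\,b$. In the renormalized basis $\{y_1,y_{-1}\}$ of $V$, Lemma~\ref{l:simple} gives $Ey_1 = 0$, $Ey_{-1} = y_1$, $Fy_1 = y_{-1}$, $Fy_{-1} = 0$, and $K y_{\pm 1} = v^{\pm 1} y_{\pm 1}$. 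These are the only ingredients needed.

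For part (a), both terms of the coproduct formula for $E(b\otimes y_1)$ vanish: the first because $Eb=0$, the second because $Ey_1=0$. The weight of $b\otimes y_1$ is $i_b+1$ by the weight formula $\Delta(1_i)=\sum_{i'+i''=i} 1_{i'}\otimes 1_{i''}$ applied to the factors, so $\Phi_1(b)$ is a weight vector of weight $i_b+1$, and it is evidently nonzero.

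For part (b), I first check that $\Phi_2(b)$ is a sum of weight vectors of common weight $i_b-1$: the first summand lies in the biweight $(i_b,-1)$ component and the second in the $(i_b-2,+1)$ component, and both add up to $i_b-1$. These two biweight spaces are distinct, so $b\otimes y_{-1}$ and $Fb\otimes y_1$ are linearly independent, which together with $[i_b]\ne 0$ (guaranteed by the standing hypothesis $[n]!\ne 0$, since $|i_b|\le m-1\le n$) gives $\Phi_2(b)\ne 0$. To show $E\Phi_2(b)=0$, I expand:
\[
E\bigl([i_b]\,b\otimes y_{-1}\bigr) = [i_b]\bigl(Eb\otimes y_{-1} + Kb\otimes Ey_{-1}\bigr) = [i_b]\,v^{i_b}\,b\otimes y_1,
\]
and
\[
E\bigl(-v^{i_b}Fb\otimes y_1\bigr) = -v^{i_b}\bigl(EFb\otimes y_1 + K(Fb)\otimes Ey_1\bigr) = -v^{i_b}\,EFb\otimes y_1 .
\]
The crucial step is to evaluate $EFb$: from relation~\eqref{R2} and $Eb=0$,
\[
EFb = FEb + \textstyle\sum_{i\in X(m-1)}[i]\,1_i b = [i_b]\,b .
\]
Substituting back, the two contributions cancel exactly, giving $E\Phi_2(b)=0$.

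I expect no real obstacle here beyond careful bookkeeping of the coproduct; the only subtle point is verifying that $\Phi_2(b)$ is nonzero, which is why a biweight argument (rather than a mere weight argument) is the right tool, and why the hypothesis $i_b>0$ (together with $[n]!\ne 0$) is needed so that $[i_b]\ne 0$.
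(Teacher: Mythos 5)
Your proof is correct and follows essentially the same route as the paper's: expand $\Delta(E)$ from Theorem~\ref{t:tensors} on each summand and use $EFb=[i_b]b$, obtained from relation~\eqref{R2} together with $Eb=0$. The only differences are cosmetic — the paper solves for the unknown coefficient of $Fb\otimes y_1$ rather than verifying the given formula directly — and your extra check that $\Phi_2(b)\ne 0$ (via the factorwise weight decomposition and $[i_b]\ne0$) is a welcome addition that the paper leaves implicit.
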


\begin{proof}
Part (a) is a special case of the fact that the tensor product of two
maximal vectors is again maximal. This follows from Theorem
\ref{t:tensors} with $(r,s) = (m-1,1)$. To get part (b), look for a
maximal vector of the form $X = b \otimes y_{-1} + \gamma Fb \otimes
y_1$ for some constant $\gamma$ yet to be determined. Applying
Theorem~\ref{t:tensors},
\begin{align*}
\Delta E (b \otimes y_{-1}) &= v^{i_b}b \otimes Ey_{-1} =
v^{i_b} b \otimes y_1 \\ \Delta E (Fb \otimes y_1) &= 
EF b \otimes y_1 =  [i_b] b \otimes y_1 .
\end{align*}
To get the second equality above, apply~\eqref{R2} to replace $EF$ by
$FE + \sum_i [i]1_i$. It follows that $\gamma = -v^{i_b}/[i_b]$ in
order to have $(\Delta E) X = 0$.  Clearing denominators gives the
formula in (b).
\end{proof}

Let $\bil{-}{-}$ be the standard (nondegenerate) symmetric bilinear
form on $V=V(1)$, defined by $\bil{y_i}{y_j} = \delta_{ij}$ extended
bilinearly. Given any multi-index $(i_1, \dots, i_{n-1}, i_n)$ in
$\{1,-1\}^n$, we set
\begin{equation*}
y_{i_1,\dots, i_{n-1},i_n} := y_{i_1} \otimes \cdots \otimes y_{i_{n-1}}
\otimes y_{i_n}.
\end{equation*}
Extend the above bilinear form to $V^{\otimes n}$, for any $n$, by
defining
\begin{equation}\label{e:bform}
\bil{y_{i_1,\dots,i_n}}{y_{j_1,\dots,j_n}} = \textstyle \prod_\alpha
\bil{y_{i_\alpha}}{y_{j_\alpha}}
\end{equation}
for any $(i_1, \dots, i_n)$, $(j_1, \dots, j_n)$ in $\{1,-1\}^n$.  As
$y_1$, $y_{-1}$ are weight vectors of weight $1$, $-1$ respectively,
it follows that $y_{i_1,\dots, i_{n-1},i_n}$ is a weight vector of
weight $\lambda_1-\lambda_2$, where $\lambda_1$ and $\lambda_2$
respectively count the number of $i_j$ equal to $1$ and $-1$. Thus the
set
\begin{equation}\label{e:ortho-basis}
  \{ y_{i_1, \dots, i_n} : (i_1, \dots, i_n) \in \{1,-1\}^n \}
\end{equation}
is a basis of weight vectors for $V^{\otimes n}$. This basis is an
orthonormal basis with respect to the bilinear form. (In particular,
weight vectors of different weights are orthogonal.)

\begin{example}\label{x:deg2}
Consider the case $n=2$ for a moment; that is, consider $V \otimes V$
with its basis $\{y_{1,1}, y_{1,-1}, y_{-1,1}, y_{-1,-1}\}$. The
vector $y_1$ is, up to a scalar, the sole maximal vector in $V$. By
Lemma~\ref{l:maximal}, the vectors
\[
\Phi_1(y_1) = y_{1,1}, \qquad \Phi_2(y_1) = y_{1,-1} - v y_{-1,1}
\]
are maximal vectors in $V \otimes V$. Notice that they are
orthogonal. We set
\[
   z_0 := \Phi_2(y_1) = y_{1,-1} - v y_{-1,1} .
\]
Observe that $z_0$ is invariant under the action of $\Schur(2)$; more
precisely, it is taken to zero by $E$, $F$ and is a weight vector of
weight $0$. Thus, its linear span is an isomorphic copy of the trivial
module in $V \otimes V$. If $v+v^{-1} = [2] \ne 0$ in $\Bbbk$, then
the $\Schur(2)$-submodule of $V\otimes V$ generated by the other
maximal vector $y_{1,1}$ has basis
\[
  \{ y_{1,1},\ Fy_{1,1}=y_{1,-1}+v^{-1} y_{-1,1}, F^2 y_{1,1} =
  (v+v^{-1})y_{-1,-1} \}
\]
and this subspace is equal to $z_0^\perp$, the orthogonal complement
with respect to the form. Thus, when $v+v^{-1} \ne 0$ we have classified
the maximal vectors in $V \otimes V$.
\end{example}

Our aim is to extend this classification to higher tensor degrees, but
first we introduce an action of $\TL_n(\delta)$ on $V^{\otimes n}$,
where $\delta = \pm(v+v^{-1})$. Let $\dot{z}_0$ be the orthogonal
projection of $V \otimes V$ onto the line $\Bbbk z_0 \subset V \otimes
V$ spanned by $z_0$.  A routine calculation shows that the matrix of
$\dot{z}_0$, with respect to the ordered basis $y_{1,1}$, $y_{1,-1}$,
$y_{-1,1}$, $y_{-1,-1}$, satisfies
\[
(v+v^{-1}) \dot{z}_0 = 
\begin{bmatrix}
  0&0&0&0\\
  0&v^{-1}&-1&0\\
  0&-1&v&0\\
  0&0&0&0
\end{bmatrix}.
\]
For each $i= 1, \dots, n-1$ we define an operator $\dot{e}_i$ on
$V^{\otimes n}$ by the relation 
\begin{equation}\label{e_i-action}
   -\dot{e}_i = 1^{\otimes(i-1)} \otimes (v+v^{-1})\dot{z}_0 \otimes
   1^{\otimes (n-1-i)}
\end{equation}
and we let $e_i$ act on $V^{\otimes n}$ as $\dot{e}_i$. The choice of
sign here is dictated by certain calculations in Section
\ref{s:cellular}; see Remark \ref{r:signs}.

\begin{lem}\label{l:tensor-action}
Letting $e_i$ act as $\dot{e}_i$ (for $i=1, \dots, n-1$) makes
$V^{\otimes n}$ into a $\TL_n(\delta)$-module, where $\delta =
\pm(v+v^{-1})$.
\end{lem}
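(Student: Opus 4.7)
The plan is to verify each of the three defining relations \eqref{e:TL} for the operators $\dot{e}_i$ acting on $V^{\otimes n}$.

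\textbf{Far commutation.} When $|i-j|>1$, the operators $\dot{e}_i$ and $\dot{e}_j$ are (up to the scalar $-(v+v^{-1})$) the tensor products of $\dot{z}_0$ with identities, acting nontrivially on the disjoint pairs of factors $\{i,i+1\}$ and $\{j,j+1\}$. Standard functoriality of the tensor product then gives $\dot{e}_i\dot{e}_j=\dot{e}_j\dot{e}_i$ automatically.

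\textbf{Quadratic relation.} Since $\dot{z}_0$ is by construction the orthogonal projection onto the line $\Bbbk z_0\subset V\otimes V$, we have $\dot{z}_0^2=\dot{z}_0$. From \eqref{e_i-action} this gives
\[
\dot{e}_i^2=(v+v^{-1})^2\bigl(1^{\otimes(i-1)}\otimes \dot{z}_0^2\otimes 1^{\otimes(n-1-i)}\bigr)
=(v+v^{-1})^2\bigl(1^{\otimes(i-1)}\otimes \dot{z}_0\otimes 1^{\otimes(n-1-i)}\bigr)
=-(v+v^{-1})\dot{e}_i,
\]
so the quadratic relation holds with $\delta=-(v+v^{-1})$. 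The result for $\delta=+(v+v^{-1})$ follows from the isomorphism $\TL_n(\delta)\cong\TL_n(-\delta)$ recalled in \S\ref{s:prelim}.

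\textbf{Braid relation.} It suffices to verify $\dot{e}_1\dot{e}_2\dot{e}_1=\dot{e}_1$ and $\dot{e}_2\dot{e}_1\dot{e}_2=\dot{e}_2$ on $V^{\otimes 3}$, since both sides of the relation $e_ie_{i\pm1}e_i=e_i$ act as the identity on the tensor positions outside $\{i-1,i,i+1\}$ and the two relations of type $\dot{e}_i\dot{e}_{i+1}\dot{e}_i=\dot{e}_i$ versus $\dot{e}_{i+1}\dot{e}_i\dot{e}_{i+1}=\dot{e}_{i+1}$ reduce to the two cases above. The key structural remark is that $\dot{e}_1$ has image $\Bbbk z_0\otimes V$ and $\dot{e}_2$ has image $V\otimes \Bbbk z_0$. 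Thus in computing $\dot{e}_1\dot{e}_2\dot{e}_1$ on a basis vector $y_{i_1,i_2,i_3}$, the first application lands in $z_0\otimes V$, and the subsequent calculation reduces to evaluating $\dot{e}_2(z_0\otimes y_{\pm 1})$ and then $\dot{e}_1$ on the result. Using the matrix of $(v+v^{-1})\dot{z}_0$ displayed before \eqref{e_i-action}, these are two short one-dimensional calculations, and one verifies that the composite agrees with $\dot{e}_1$ on every basis vector. The symmetric case $\dot{e}_2\dot{e}_1\dot{e}_2=\dot{e}_2$ is handled identically.

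The only step requiring genuine work is the braid relation, but it is reduced to a finite and explicit computation in $V^{\otimes 3}$ of manageable size, with no conceptual obstacle beyond careful bookkeeping of the scalars arising from $\dot{z}_0$ on $y_{1,-1}$ and $y_{-1,1}$.
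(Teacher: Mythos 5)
Your proposal is correct and follows essentially the same route as the paper: the quadratic and far-commutation relations are formal consequences of $\dot{z}_0$ being an idempotent acting in disjoint tensor positions, and the cubic relation is checked by direct computation on $V^{\otimes 3}$ (a reduction the paper leaves implicit but which is valid, and sign-indifferent since the cubic relation involves three factors of $\dot e$ on each side... rather, an odd number on both sides up to the stated identity). Your handling of the $\pm$ ambiguity via the isomorphism $\TL_n(\delta)\cong\TL_n(-\delta)$ is a slightly cleaner packaging of what the paper does, but it is not a different argument.
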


\begin{proof}
The projection $\dot{z}_0$ satisfies $\dot{z}_0^2 = \dot{z}_0$, so for
any scalar $\delta$, the scalar multiple $\delta\dot{z}_0$ satisfies
$(\delta\dot{z}_0)^2 = \delta(\delta\dot{z}_0)$, and hence
$\dot{e}_i^2 = \delta \dot{e}_i$.  For any $\delta$, it is clear that
$\dot{e}_i \dot{e}_j = \dot{e}_j \dot{e}_i$ if $|i-j|>1$. Now set
$\delta = \pm(v+v^{-1})$. The cubic relations $\dot{e}_i \dot{e}_{i
  \pm1} \dot{e}_i = \dot{e}_i$ are verified by direct computation
(they are indifferent to the choice of sign). Thus $V^{\otimes n}$ is
a $\TL_n(\delta)$-module.
\end{proof}

\begin{lem}\label{l:actions-commute}
If $\delta = \pm(v+v^{-1})$, the above action of $\TL_n(\delta)$ on
$V^{\otimes n}$ commutes with the action of the Schur algebra
$\Schur(n)$.
\end{lem}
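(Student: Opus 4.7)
My plan is to reduce the claim to the case $n=2$ via coassociativity of the coproduct $\Delta$ from Theorem~\ref{t:tensors}, and then verify that $(v+v^{-1})\dot{z}_0$ commutes with the $\Schur(2)$-action on $V \otimes V$ using the explicit structure already computed in Example~\ref{x:deg2}.

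For the reduction, I would first establish coassociativity of $\Delta$, namely $(\Delta \otimes \text{id}) \circ \Delta = (\text{id} \otimes \Delta) \circ \Delta$ as maps $\Schur(r+s+t) \to \Schur(r) \otimes \Schur(s) \otimes \Schur(t)$. Since both composites are algebra morphisms, it suffices to check agreement on the generators $E, F, 1_i$; the computation is direct using $\Delta(K) = K \otimes K$, which follows from Theorem~\ref{t:tensors}(c) by summing $v^i \Delta(1_i)$. Iterating then produces an algebra map $\Schur(n) \to \Schur(i-1) \otimes \Schur(2) \otimes \Schur(n-i-1)$ compatible with the tensor decomposition $V^{\otimes n} = V^{\otimes(i-1)} \otimes V^{\otimes 2} \otimes V^{\otimes(n-i-1)}$. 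Writing the image of any $x \in \Schur(n)$ as $\sum_j a_j \otimes b_j \otimes c_j$, the operator $\dot{e}_i = -\,\text{id} \otimes (v+v^{-1})\dot{z}_0 \otimes \text{id}$ acts only on the middle factor, so $\dot{e}_i$ commutes with $x$ on $V^{\otimes n}$ as soon as $(v+v^{-1})\dot{z}_0$ commutes with each $b_j$ on $V \otimes V$.

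To finish the $n=2$ case, I would argue conceptually whenever possible and fall back on matrices in the edge case. The vector $z_0$ is a weight-$0$ maximal vector killed by both $E$ and $F$ (Example~\ref{x:deg2}), so $\Bbbk z_0$ is an $\Schur(2)$-submodule. From the same example the submodule $U := \Schur(2) y_{1,1}$ has spanning set $\{y_{1,1},\,Fy_{1,1},\,F^2y_{1,1}\}$, and a short pairing calculation shows each of these vectors lies in $z_0^\perp$. When $v+v^{-1} \ne 0$ the set has three linearly independent elements, forcing $U = z_0^\perp$ and yielding the $\Schur(2)$-decomposition $V \otimes V = \Bbbk z_0 \oplus z_0^\perp$; the projection $\dot{z}_0$ onto the first summand is then an $\Schur(2)$-module map. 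In the degenerate case $v+v^{-1}=0$, where the form degenerates on $\Bbbk z_0$, I would simply verify commutativity of the displayed matrix of $(v+v^{-1})\dot{z}_0$ against the $4 \times 4$ matrices of $\Delta(E), \Delta(F), \Delta(1_i)$ by direct multiplication. The main obstacle is precisely the identification of $z_0^\perp$ as an $\Schur(2)$-submodule without invoking semisimplicity or any obvious invariance of $\bil{-}{-}$ under the coproduct; the explicit basis for $\Schur(2)y_{1,1}$ supplied by Example~\ref{x:deg2} is exactly what is needed to sidestep this issue.
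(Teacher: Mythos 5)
Your argument is correct and is, at bottom, the same as the paper's, whose entire proof is the single observation that the $\dot{e}_i$ are multiples of idempotent projections onto $\Schur(n)$-invariant lines and hence commute with the $\Schur(n)$-action. The details you supply --- coassociativity of $\Delta$ to localize the question to the middle $V\otimes V$ factor, the identification of $z_0^\perp$ with the submodule $\Schur(2)y_{1,1}$ so that the \emph{kernel} of the projection is invariant as well as its image, and the separate treatment of the case $v+v^{-1}=0$ where $z_0$ is isotropic and ``orthogonal projection'' only makes sense after rescaling --- are precisely what that one-line proof leaves implicit, so your write-up is a legitimate, and more careful, rendering of the same approach.
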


\begin{proof}
The $\dot{e}_i$ are multiples of idempotent projections onto
$\Schur(n)$-invariant lines, so they commute with the action of
$\Schur(n)$.
\end{proof}

\begin{lem}\label{l:adjointness}
If $b$, $b'$ are any weight vectors in $V^{\otimes (m-1)}$ of
respective weight $i_b$, $i_{b'}$ then the actions of $E$, $F$ satisfy
the following adjointness property:
\[
\bil{Eb}{b'} = v^{i_b+1} \bil{b}{Fb'}.
\]
\end{lem}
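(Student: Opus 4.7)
The plan is to proceed by induction on the tensor degree $n := m-1$. For the base case $n = 1$, so that $b, b' \in V$, I verify the identity directly on the four ordered pairs of basis vectors $(y_\epsilon, y_{\epsilon'})$ with $\epsilon, \epsilon' \in \{1,-1\}$. Using $Ey_{-1} = y_1$, $Fy_1 = y_{-1}$, $Ey_1 = Fy_{-1} = 0$, and the orthonormality of $\{y_1, y_{-1}\}$, only the pair $(y_{-1}, y_1)$ contributes a nonzero value on either side, and both sides equal $1$ there.

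For the inductive step, assume the identity on $V^{\otimes (n-1)}$ and consider $b, b' \in V^{\otimes n}$. Writing $V^{\otimes n} = V^{\otimes(n-1)} \otimes V$ and invoking bilinearity of both sides, I reduce to $b = b_0 \otimes y_\epsilon$ and $b' = b'_0 \otimes y_{\epsilon'}$, with $b_0, b'_0$ weight vectors of respective weights $i_{b_0}, i_{b'_0}$ and $\epsilon, \epsilon' \in \{1,-1\}$; note $i_b = i_{b_0}+\epsilon$. Applying Theorem~\ref{t:tensors} with $(r,s) = (n-1,1)$ gives
\begin{align*}
Eb &= Eb_0 \otimes y_\epsilon + v^{i_{b_0}}\, b_0 \otimes Ey_\epsilon, \\
Fb' &= v^{-\epsilon'}\, Fb'_0 \otimes y_{\epsilon'} + b'_0 \otimes Fy_{\epsilon'}.
\end{align*}
Expanding $\bil{Eb}{b'}$ and $v^{i_b+1}\bil{b}{Fb'}$ via the product formula~\eqref{e:bform} then yields two summands on each side.

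A term-by-term comparison closes the induction. The $\bil{y_\epsilon}{y_{\epsilon'}}$-summand requires $\bil{Eb_0}{b'_0} = v^{i_{b_0}+1}\bil{b_0}{Fb'_0}$, which is the inductive hypothesis (a nonzero $\bil{y_\epsilon}{y_{\epsilon'}}$ forces $\epsilon = \epsilon'$, so that the exponent $i_b+1-\epsilon'$ collapses to $i_{b_0}+1$). The $\bil{b_0}{b'_0}$-summand requires $\bil{Ey_\epsilon}{y_{\epsilon'}} = v^{\epsilon+1}\bil{y_\epsilon}{Fy_{\epsilon'}}$, which is exactly the base case. The main obstacle is purely bookkeeping: one must verify that the factor $v^{i_{b_0}}$ coming from $K$ in $\Delta(E)$ and the factor $v^{-\epsilon'}$ coming from $K^{-1}$ in $\Delta(F)$ combine with the inductive and base-case exponents to produce precisely $v^{i_b+1}$, which follows from $i_b = i_{b_0}+\epsilon$.
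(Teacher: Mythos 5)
Your proof is correct and follows essentially the same route as the paper's: induction on the tensor degree, reduction to simple tensors, and expansion of $\Delta(E)$, $\Delta(F)$ via Theorem~\ref{t:tensors}, with the exponent bookkeeping ($i_b = i_{b_0}+\epsilon$) doing the work that the paper summarizes by observing that $i_b+1$ is the average of $i_b$ and $i_{b'}$. You have simply written out in full the details that the paper's two-line proof leaves to the reader.
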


\begin{proof}
This is proved by induction, using Theorem \ref{t:tensors}, and it
obviously suffices to check it on simple tensors. Since weight vectors
of differing weights are orthogonal, both sides are zero unless $i_b+2
= i_{b'}$. In that case, the exponent $i_b+1$ is the average of $i_b$,
$i_{b'}$.
\end{proof}

\begin{lem}\label{l:orthogonality}
  If $b,b'$ are (not necessarily distinct) weight vectors in
  $V^{\otimes (m-1)}$ of respective weight $i_b$, $i_{b'}$ then
  \begin{enumerate}
  \item $\bil{\Phi_1(b)}{\Phi_1(b')}= \bil{b}{b'}$.
  \item $\bil{\Phi_1(b)}{\Phi_2(b')}=0$ if $b$ is maximal.
  \item $\bil{\Phi_2(b)}{\Phi_2(b')} = v[i_b][i_b+1] \bil{b}{b'}$ if
    $b$ is maximal and 
    $i_b>0$, $i_{b'}>0$.
  \end{enumerate}
\end{lem}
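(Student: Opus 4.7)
The form on $V^{\otimes n}$ factors as a product over tensor factors, so each of the three inner products reduces to a sum of four terms indexed by the second tensor factors being $y_1$ or $y_{-1}$, with the cross terms $\langle y_1, y_{-1}\rangle = \langle y_{-1}, y_1\rangle = 0$ immediately killing half the contributions. Part (a) is then just the calculation
\[
\langle b\otimes y_1, b'\otimes y_1\rangle = \langle b,b'\rangle \langle y_1, y_1\rangle = \langle b,b'\rangle.
\]

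For part (b), after the cross terms die, one is left with
\[
\langle \Phi_1(b),\Phi_2(b')\rangle = -v^{i_{b'}}\langle b, Fb'\rangle.
\]
I would then apply Lemma~\ref{l:adjointness} in the form $\langle Eb, b'\rangle = v^{i_b+1}\langle b, Fb'\rangle$. Since $b$ is maximal, $Eb=0$, and dividing by the nonzero scalar $v^{i_b+1}$ forces $\langle b, Fb'\rangle = 0$.

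Part (c) is the main calculation. Expanding and discarding the two cross terms gives
\[
\langle \Phi_2(b),\Phi_2(b')\rangle = [i_b][i_{b'}]\langle b,b'\rangle + v^{i_b+i_{b'}}\langle Fb, Fb'\rangle.
\]
If $i_b\ne i_{b'}$ then $\langle b,b'\rangle = 0$ and the two sides are both zero by weight considerations (note $\Phi_2(b), \Phi_2(b')$ have weights $i_b-1, i_{b'}-1$), so one may assume $i_b=i_{b'}$. To handle the term $\langle Fb, Fb'\rangle$, I would again invoke Lemma~\ref{l:adjointness}, rewriting $v^{i_b-1}\langle Fb, Fb'\rangle = \langle EFb, b'\rangle$, and then use the commutator relation \eqref{R2'} together with $Eb=0$ to get $EFb = [i_b]b$. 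This yields $\langle Fb, Fb'\rangle = v^{1-i_b}[i_b]\langle b,b'\rangle$, so
\[
\langle \Phi_2(b),\Phi_2(b')\rangle = [i_b]\bigl([i_b] + v^{i_b+1}\bigr)\langle b,b'\rangle.
\]

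The final step — the only part that is not purely mechanical — is the identity $[i_b] + v^{i_b+1} = v[i_b+1]$, which one verifies instantly from the definition of the balanced quantum integer: both sides equal $v^{1-i_b} + v^{3-i_b} + \cdots + v^{i_b+1}$. Substituting completes the proof of (c). The only conceptual obstacle is keeping the powers of $v$ straight when moving $E$ past the tensor factors via adjointness; everything else is bookkeeping.
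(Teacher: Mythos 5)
Your proposal is correct and follows essentially the same route as the paper: expand the form factorwise, kill the cross terms, use Lemma~\ref{l:adjointness} to convert $\bil{b}{Fb'}$ and $\bil{Fb}{Fb'}$ into statements about $Eb$ and $EFb$, apply maximality together with relation~\eqref{R2} to get $EFb=[i_b]b$, and finish with the identity $[i_b]+v^{i_b+1}=v[i_b+1]$. The bookkeeping of the powers of $v$ matches the paper's computation exactly.
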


\begin{proof}
The calculation $\bil{\Phi_1b}{\Phi_1b'} = \bil{b}{b'}\bil{y_1}{y_1} =
\bil{b}{b'}$ proves part (a).

We now apply the adjointness property in Lemma~\ref{l:adjointness} to
prove part (b). First, if $i_{b'}=0$ then $\Phi_2 b' = 0$ and we are
done. Otherwise, by the definitions we have
\[
\bil{\Phi_1b}{\Phi_2b'} = [i_{b'}]\bil{b}{b'}\bil{y_{-1}}{y_1} - v^{i_{b'}}
\bil{b}{Fb'}\bil{y_1}{y_1} = -v^{i_{b'}} \bil{b}{Fb'} .
\]
By adjointness, $\bil{b}{Fb'} = v^{-i_b-1} \bil{Eb}{b'} = 0$, since
$b$ is maximal, proving part (b).

Finally, we prove part (c). First, we observe that if $i_b \ne i_{b'}$
then both sides of the formula in (c) are zero, because weight vectors
of different weight are orthogonal, as noted above. So we assume for
the rest of the proof that $i_b = i_{b'}$.  Since the cross terms are
zero, we have
\begin{align*}
\bil{\Phi_2b}{\Phi_2b'} &= [i_b]^2\bil{b}{b'}
\bil{y_{-1}}{y_{-1}} + v^{2i_b} \bil{Fb}{Fb'}
\bil{y_1}{y_1} \\
&= [i_b]^2\bil{b}{b'} + v^{2i_b} \bil{Fb}{Fb'}.
\end{align*}
Again applying adjointness, $\bil{Fb}{Fb'} = v^{-i_b+1}
\bil{EFb}{b'}$. Since $b$ is maximal, $FEb = 0$ and thus by
relation~\eqref{R2} we have $EFb = \sum [i]1_i b = [i_b]b$. Thus
$\bil{EFb}{b'} = [i_b] \bil{b}{b'}$ and
\[
\bil{Fb}{Fb'} = v^{-i_b+1}[i_b] \bil{b}{b'}.
\]
Putting this back into the above formula for $\bil{\Phi_2b}{\Phi_2b'}$
yields
\[
\bil{\Phi_2b}{\Phi_2b'} = \big([i_b]^2 + v^{i_b+1}
    [i_b]\big)\bil{b}{b'}.
\]
Now the equality $[i_b]^2 + v^{i_b+1} [i_b] = v[i_b][i_b+1]$ is
verified by an elementary calculation, completing the proof of part
(c).
\end{proof}

Now we construct maximal vectors in tensor degree $n$.  Let
$\mathcal{P}(n)$ be the set of length $n$ walks from the origin to
some node in the Bratteli diagram.  For each $p \in \mathcal{P}(n)$,
let
\[
\omax(p) = \left(\Upsilon_n\Upsilon_{n-1}\cdots \Upsilon_1\right)(1)
\]
where $\Upsilon_i = \Phi_1$ or $\Phi_2$ according to whether the
corresponding edge in $p$ is vertical or diagonal.

\begin{lem}\label{l:independence}
  For every $p \in \mathcal{P}(n)$, the vector $\omax(p)$ is
  maximal. If $[n]!  \ne 0$ then these maximal vectors are
  non-isotropic and pairwise orthogonal, hence linearly independent.
\end{lem}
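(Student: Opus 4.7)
The plan is to prove both parts of the lemma by induction on $n$. The base case $n=0$ is trivial: the unique empty walk gives $\omax = 1 \in \Bbbk = V^{\otimes 0}$, which is (vacuously) maximal with self-pairing $1$.

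For the inductive step on maximality, fix $p \in \mathcal{P}(n)$ and let $q \in \mathcal{P}(n-1)$ denote its restriction obtained by deleting the final edge, so that $\omax(p) = \Upsilon_n(\omax(q))$ with $\omax(q)$ maximal by induction. If the final edge of $p$ is vertical, then $\Upsilon_n = \Phi_1$ and Lemma~\ref{l:maximal}(a) gives maximality immediately. If the final edge is diagonal, say from a node $\mu$ to $(\mu_1,\mu_2+1)$, then the partition condition $\mu_2 + 1 \le \mu_1$ forces $i_q = \mu_1 - \mu_2 > 0$, which is exactly the hypothesis needed to invoke Lemma~\ref{l:maximal}(b); so $\omax(p) = \Phi_2(\omax(q))$ is maximal. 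Thus the applicability condition for $\Phi_2$ matches the Bratteli diagram structure precisely.

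For orthogonality and non-isotropy, fix two walks $p, p' \in \mathcal{P}(n)$ with respective predecessors $q, q' \in \mathcal{P}(n-1)$, and compute $\bil{\omax(p)}{\omax(p')}$ by case analysis on the types of the final edges, using Lemma~\ref{l:orthogonality} with $b = \omax(q)$ maximal (by the first part). If both final edges are vertical, part (a) gives $\bil{\omax(p)}{\omax(p')} = \bil{\omax(q)}{\omax(q')}$. If one is vertical and the other diagonal, part (b) gives $0$. If both are diagonal, part (c) gives $v[i_q][i_q+1]\bil{\omax(q)}{\omax(q')}$. Since $p,p'$ are determined by $(q, \text{edge type})$ and $(q', \text{edge type})$, the pair $p = p'$ forces $q=q'$ (once the edge types match), and conversely. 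Therefore, the induction hypothesis produces $\bil{\omax(p)}{\omax(p')} = 0$ whenever $p \ne p'$, and non-isotropy when $p = p'$ reduces to non-vanishing of the scalar $v[i_q][i_q+1]$; since $\omax(q) \in V^{\otimes(n-1)}$ has weight $i_q \le n-1$, both $[i_q]$ and $[i_q+1]$ divide $[n]!$ and are thus nonzero by hypothesis.

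Finally, linear independence is automatic: if $\sum_p c_p \omax(p) = 0$, pairing with $\omax(p')$ isolates $c_{p'} \bil{\omax(p')}{\omax(p')} = 0$, and non-isotropy forces $c_{p'} = 0$. The argument has no serious obstacle; the main point is to confirm that the three parts of Lemma~\ref{l:orthogonality} cover exactly the three edge-type combinations that can occur, with the Bratteli structure ensuring $\Phi_2$ is applied only when its weight hypothesis holds, and $[n]! \ne 0$ ensuring the inductive scalar does not vanish.
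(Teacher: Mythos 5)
Your proof is correct and follows essentially the same route as the paper, which likewise argues by induction on $n$ using Lemma~\ref{l:maximal} for maximality and Lemma~\ref{l:orthogonality} for orthogonality and non-isotropy; you have simply spelled out the case analysis (edge types versus the three parts of Lemma~\ref{l:orthogonality}, the Bratteli condition guaranteeing $i_q>0$ for diagonal steps, and the nonvanishing of $v[i_q][i_q+1]$ under $[n]!\ne 0$) that the paper leaves implicit.
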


\begin{proof}
The maximality follows by induction from Lemma~\ref{l:maximal}.  If
$[n]! \ne 0$, the fact that the $\omax(p)$ are pairwise orthogonal and
non-isotropic follows by induction from Lemma~\ref{l:orthogonality}.
\end{proof}

Let $\M(\lambda) = \{\omax(p) : p \in \mathcal{P}(\lambda)\}$. Any
$\omax(p)$ in $\M(\lambda)$ has weight $\lambda_1-\lambda_2$.  By
construction, we have
\begin{equation}\label{e:C-decomp}
  \M(\lambda) = \Phi_1 \M(\lambda_1-1,\lambda_2) \sqcup \Phi_2
  \M(\lambda_1,\lambda_2-1)
\end{equation}
where the first (resp., second) term in the disjoint union is omitted
if $\lambda_2 = \lambda_1$ (resp. $\lambda_2=0$). Recall the notation
$\cc_\lambda = |\mathcal{P}(\lambda)|$ from
Lemma~\ref{l:path-mult}; we have $\cc_\lambda = |\M(\lambda)|$. Write
$\cc_{\lambda_1,\lambda_2} = \cc_\lambda$ when $\lambda = (\lambda_1,
\lambda_2)$. If $[n]! \ne 0$, the $\cc_\lambda$ satisfy the recurrence
\[
\cc_{\lambda_1,\lambda_2} = \cc_{\lambda_1-1,\lambda_2} +
\cc_{\lambda_1,\lambda_2-1}
\]
where we define $\cc_\lambda = 0$ if $\lambda_1 < \lambda_2$ or
$\lambda_2 < 0$. It is clear that $\cc_{n,0} = 1$. Solving the
above recurrence with these boundary conditions gives the closed formula
\begin{equation}
  \cc_\lambda = \binom{n}{\lambda_2} - \binom{n}{\lambda_2-1}
\end{equation}
where as usual $\binom{n}{-1} = 0$.  For each $\lambda$ in
$\Lambda(n)$, we define
\[
  C(\lambda) := \text{$\Bbbk$-linear span of $\M(\lambda)$}.
\]
By Lemma~\ref{l:independence}, $\M(\lambda)$ is a basis of
$C(\lambda)$, so
\[
\dim_\Bbbk C(\lambda) = \cc_\lambda = \cc_{\lambda_1, \lambda_2} .
\]
Observe that $\cc_{n,n} = \frac{1}{n+1} \binom{2n}{n}$, the $n$th
Catalan number.

\begin{lem}\label{l:commute-Phi}
For any $1 \le i \le n-2$, the action of $e_i$ commutes
with both $\Phi_1$, $\Phi_2$ in the sense that
\[
e_i(\Phi_1 \omax(p)) = \Phi_1(e_i \omax(p)), \quad
e_i(\Phi_2 \omax(p)) = \Phi_2(e_i
\omax(p))
\]
for all walks $p$ of length $n-1$.
\end{lem}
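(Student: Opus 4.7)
The plan is to exploit the locality of the Temperley--Lieb action: for $1 \le i \le n-2$, the operator $e_i$ acts on $V^{\otimes n}$ through the tensor slots in positions $i$ and $i+1$ only, and both of these indices lie in $\{1,\dots,n-1\}$. Under the identification $V^{\otimes n} = V^{\otimes(n-1)} \otimes V$, the defining formula \eqref{e_i-action} shows that $e_i$ acts as $\tilde{e}_i \otimes \mathrm{id}_V$, where $\tilde{e}_i$ denotes the corresponding generator of $\TL_{n-1}$ on $V^{\otimes(n-1)}$. This is the one observation on which everything else hinges.

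With that identification in hand, the $\Phi_1$ case is immediate:
\[
e_i\bigl(\Phi_1\omax(p)\bigr) = e_i\bigl(\omax(p)\otimes y_1\bigr) = \bigl(\tilde{e}_i\omax(p)\bigr)\otimes y_1 = \Phi_1\bigl(\tilde{e}_i\omax(p)\bigr).
\]
One should note that $\tilde{e}_i\omax(p)$ is either zero or itself a maximal weight vector of the same weight as $\omax(p)$: by Lemma~\ref{l:actions-commute} applied in degree $n-1$, $\tilde{e}_i$ commutes with $E$ and with every idempotent $1_j$, so it preserves both maximality and weight.

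For the $\Phi_2$ case, assume $\omax(p)$ has weight $i_p>0$ (otherwise both sides vanish). Expanding $\Phi_2$ linearly and using locality yields
\begin{align*}
e_i\bigl(\Phi_2\omax(p)\bigr) &= [i_p]\,e_i\bigl(\omax(p)\otimes y_{-1}\bigr) - v^{i_p}\,e_i\bigl(F\omax(p)\otimes y_1\bigr)\\
&= [i_p]\,\bigl(\tilde{e}_i\omax(p)\bigr)\otimes y_{-1} - v^{i_p}\,\bigl(\tilde{e}_i F\omax(p)\bigr)\otimes y_1.
\end{align*}
To finish, I would invoke the commutation $\tilde{e}_i F = F\tilde{e}_i$ on $V^{\otimes(n-1)}$, which is again Lemma~\ref{l:actions-commute} in degree $n-1$. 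The resulting expression is exactly $\Phi_2(\tilde{e}_i\omax(p))$; this is well-formed because $\tilde{e}_i\omax(p)$ is either zero or a maximal vector of the same positive weight $i_p$.

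The argument is essentially routine once the locality is noted; there is no serious obstacle, only the bookkeeping required to check that the domain of $\Phi_2$ (maximality plus positive weight) is preserved by $\tilde{e}_i$, which is handled by the commutation with $E$ and with the weight idempotents. The hypothesis $1\le i\le n-2$ is used twice: it ensures the identification $e_i = \tilde{e}_i\otimes\mathrm{id}_V$, and it places $\tilde{e}_i$ in $\TL_{n-1}$ so that Lemma~\ref{l:actions-commute} applies at the smaller degree.
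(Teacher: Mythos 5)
Your proof is correct and takes essentially the same route as the paper: both arguments rest on the locality observation that $e_i$ acts as the identity in the last tensor slot when $i \le n-2$, dispose of $\Phi_1$ immediately, and handle $\Phi_2$ by expanding linearly and invoking the commutation of $e_i$ with $F$ from Lemma~\ref{l:actions-commute}. Your extra bookkeeping about $\tilde{e}_i\omax(p)$ remaining maximal of the same weight is a reasonable (and correct) addition that the paper leaves implicit.
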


\begin{proof}
For $\Phi_1$ this is obvious, since the action of $e_i$ is identity in
the last tensor position. Suppose that $b = \omax(p)$ for some walk $p$.
Then
\begin{align*}
  e_i(\Phi_2(b)) &= e_i([i_b] b \otimes y_{-1} - v^{i_b} Fb \otimes y_{1})\\
  &= [i_b] (e_i b) \otimes v_{-1} - v^{i_b} e_i (Fb) \otimes y_{1}\\
  &= [i_b] (e_i b) \otimes v_{-1} - v^{i_b} F (e_i b) \otimes y_{1}
\end{align*}
as $e_i$ commutes with $F$. The last line is the same as $\Phi_2(e_i
b)$, so this proves the claim for $\Phi_2$.
\end{proof}

Recall that we write $V(\lambda)$ interchangeably for
$V(\lambda_1-\lambda_2)$.  The following is the main result of this
section.

\begin{thm}\label{t:main}
  Fix $0 \ne v \in \Bbbk$. Suppose that $[n]! \ne 0$. Let $\delta =
  \pm(v+v^{-1})$, and write $\TL_n$ short for $\TL_n(\delta)$. Then:
  \begin{enumerate}
  \item For $\lambda \in \Lambda(n)$, the set $\{\omax(p) \mid p \in
    \mathcal{P}(\lambda)\}$ is a basis for the subspace of maximal
    vectors of weight $\lambda_1-\lambda_2$ in $V^{\otimes n}$.
  \item $V^{\otimes n} = \textstyle \bigoplus_{p \in \mathcal{P}(n)}
    \Schur(n) \omax(p) \cong \bigoplus_{\lambda \in \Lambda(n)} \cc_\lambda
    V(\lambda)$ is a semisimple $\Schur(n)$-module decomposition.
  \item If $p \in \mathcal{P}(n)$ has weight $k$, $\{F^a \omax(p): 0
    \le a \le k \}$ is an orthogonal basis for $\Schur(n) \omax(p)$.
    Furthermore, $\bigcup_{p \in \mathcal{P}(n)} \{F^a \omax(p): 0 \le
    a \le \operatorname{weight}(\omax(p)) \}$ is a basis of $V^{\otimes n}$,
    the elements of which are pairwise orthogonal.
  \item $C(\lambda)$ is a simple $\TL_n$-module of dimension
    $\cc_\lambda$, and $\{C(\lambda): \lambda \in \Lambda(n)\}$ is a set
    of pairwise non-isomorphic simple $\TL_n$-modules.
  \item The image of the representation $\TL_n
    \to \End_\Bbbk(V^{\otimes n})$ is equal to the centralizer algebra
    $\End_{\Schur(n)}(V^{\otimes n})$. 
  \item $\TL_n \cong \End_{\Schur(n)}(V^{\otimes n})$ is (split)
    semisimple, acts faithfully on $V^{\otimes n}$, and the set of
    simple $\TL_n$-modules in part (c) is a complete set.
  \item $V^{\otimes n}$ satisfies Schur--Weyl duality, as an
    $(\Schur(n), \TL_n)$-bimodule; i.e., the image of each action
    equals the full centralizer of the other.
  \item $V^{\otimes n} \cong \bigoplus_{\lambda \in \Lambda(n)}
      V(\lambda) \otimes C(\lambda)$ as $(\Schur(n),
      \TL_n)$-bimodules.
  \end{enumerate}
\end{thm}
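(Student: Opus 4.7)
The plan is to dispatch parts (a)--(c) directly from the semisimple structure of $\Schur(n)$ (Theorem \ref{t:S-ss}) and then obtain (d)--(h) via a dimension count combined with the double centralizer theorem. For (a), Lemma \ref{l:independence} supplies $\cc_\lambda$ linearly independent maximal vectors $\omax(p)$ of weight $\lambda_1 - \lambda_2$; since $\Schur(n)$ is split semisimple, the dimension of the space of maximal vectors of this weight equals the multiplicity $[V^{\otimes n}: V(\lambda)] = \cc_\lambda$ furnished by Lemma \ref{l:path-mult}, so $\M(\lambda)$ must be a basis of that space. Part (b) is then immediate: each $\Schur(n)\omax(p) \cong V(\lambda(p))$ is simple by Lemma \ref{l:simple}, the sum is direct because linearly independent maximal vectors in a semisimple module generate independent simple summands, and the total dimension matches $\dim V^{\otimes n}$ by Corollary \ref{c:path-mult}. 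For (c), Lemma \ref{l:simple} gives the basis $\{F^a \omax(p)\}$ of each cyclic piece, with orthogonality within a single piece automatic from distinct weights; orthogonality between $F^a \omax(p)$ and $F^a \omax(p')$ for $p \ne p'$ of the same weight reduces, via $a$-fold application of Lemma \ref{l:adjointness} together with the $E$-eigenvalue formula of Lemma \ref{l:simple}, to a scalar multiple of $\bil{\omax(p)}{\omax(p')} = 0$.

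For (d)--(h), by Lemma \ref{l:actions-commute} the representation $\rho: \TL_n \to \End_\Bbbk(V^{\otimes n})$ factors through $B := \End_{\Schur(n)}(V^{\otimes n})$. Using (b) and split semisimplicity of $\Schur(n)$,
\[
B \cong \bigoplus_{\lambda \in \Lambda(n)} \End_\Bbbk(C(\lambda)), \qquad \dim_\Bbbk B = \sum_{\lambda \in \Lambda(n)} \cc_\lambda^2,
\]
with each $C(\lambda)$ a simple $B$-module of dimension $\cc_\lambda$ (via the identification of $C(\lambda)$ with $\Hom_{\Schur(n)}(V(\lambda),V^{\otimes n})$). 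Combining the closed form $\cc_\lambda = \binom{n}{\lambda_2} - \binom{n}{\lambda_2-1}$ with the classical Catalan identity
\[
\sum_{k=0}^{\lfloor n/2 \rfloor}\left(\binom{n}{k} - \binom{n}{k-1}\right)^2 = \tfrac{1}{n+1}\binom{2n}{n}
\]
then equates $\dim B$ with $\dim \TL_n$. Once $\rho$ is shown to be injective, it becomes an isomorphism $\TL_n \xrightarrow{\sim} B$; this transfers the matrix-algebra structure of $B$ and its simple modules $C(\lambda)$ to $\TL_n$, giving (d), (e), and (f) at once. For (g), $\Schur(n)$ acts faithfully on $V^{\otimes n}$ (every $V(\lambda)$ appears, since $\cc_\lambda \ge 1$) and $B$ is semisimple, so the double centralizer theorem yields $\rho(\Schur(n)) = \End_B(V^{\otimes n})$. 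Finally, (h) is the bimodule form of the $\Schur(n)$-isotypic decomposition of $V^{\otimes n}$, paired with the simple $B$-module $C(\lambda)$ on each isotype.

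The main obstacle is the injectivity of $\rho$, since the dimension equality only makes injectivity and surjectivity equivalent. I would establish it by induction on $n$ using Lemma \ref{l:commute-Phi}: the inductive hypothesis implies that $\rho(\TL_{n-1})$ realizes every block-diagonal endomorphism of $C(\lambda)$ relative to the decomposition $C(\lambda) = \Phi_1 C(\lambda_1-1,\lambda_2) \sqcup \Phi_2 C(\lambda_1,\lambda_2-1)$ from \eqref{e:C-decomp}, while the new generator $e_{n-1}$ acts via the rank-one projection $\dot z_0$ in positions $n-1, n$ (see \eqref{e_i-action}) and therefore couples the two blocks. A direct computation of $\rho(e_{n-1})$ applied to pairs of vectors of the forms $\Phi_1\omax(p')$ and $\Phi_2\omax(p'')$ produces nonzero off-diagonal contributions, and together with the block-diagonal part these fill out all of $\End_\Bbbk(C(\lambda))$; summing over $\lambda$ then gives surjectivity of $\rho$. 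An alternative, possibly cleaner, route is to appeal to the cellular half-diagram basis of $\TL_n$ built in \S\ref{s:cellular} and verify that its image under $\rho$ is linearly independent, using the action formulas of \S\ref{s:action} to reduce the verification to the already-established orthogonality of the $\omax(p)$.
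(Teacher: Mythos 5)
Your parts (a)--(c) match the paper's argument essentially verbatim (orthogonality gives independence, Corollary~\ref{c:path-mult} gives the dimension count, adjointness gives orthogonality of the $F^a\omax(p)$). For (d)--(h) you reorganize the logic: the paper first proves \emph{simplicity} of $C(\lambda)$ over $\TL_n$ by induction (any nonzero submodule would restrict to $\Phi_1C(\lambda')$ or $\Phi_2C(\lambda'')$ by Schur's lemma, but the action of $e_{n-1}$ preserves neither block), then gets surjectivity of $\TL_n\to\End_{\Schur(n)}(V^{\otimes n})$ from the inequality $\dim Y\ge\sum\cc_\lambda^2$ and injectivity from the Catalan identity; you instead prove \emph{surjectivity} directly (block-diagonal part from $\TL_{n-1}$ plus off-diagonal coupling by $e_{n-1}$ generates $\End_\Bbbk(C(\lambda))$) and deduce simplicity afterwards. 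Both routes consume the same three ingredients --- induction via \eqref{e:C-decomp} and Lemma~\ref{l:commute-Phi}, the fact that $e_{n-1}$ does not preserve the two blocks, and the Catalan dimension count --- so this is a legitimate variant rather than a new method.

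Two steps in your surjectivity argument need shoring up. First, ``summing over $\lambda$'' does not follow from surjectivity onto each $\End_\Bbbk(C(\lambda))$ separately: a subalgebra of $\bigoplus_\lambda\End_\Bbbk(C(\lambda))$ can surject onto every factor without being the whole sum (a diagonal copy), and this is a live concern here since distinct cell modules can share a dimension (e.g.\ $\cc_{(5,1)}=\cc_{(3,3)}=5$ for $n=6$). You must additionally check that the $C(\lambda)$ are pairwise non-isomorphic as $\TL_n$-modules, which the paper does by restriction to $\TL_{n-1}$. Second, to generate all of $\End_\Bbbk(A\oplus B)$ from $\End_\Bbbk(A)\oplus\End_\Bbbk(B)$ you need \emph{both} off-diagonal blocks of $e_{n-1}$ to be nonzero; one nonzero block only yields a block-triangular algebra. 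This is true --- $e_{n-1}$ is self-adjoint for the form, the two blocks are orthogonal by Lemma~\ref{l:orthogonality}(b), and the form is nondegenerate on each, so the two off-diagonal blocks are adjoints of one another --- but it should be said. Finally, your proposed ``cleaner'' alternative via \S\ref{s:cellular} and \S\ref{s:action} is circular: those sections presuppose Theorem~\ref{t:main} (the simplicity of $C(\lambda)$ and the identification $\TL_n\cong\End_{\Schur(n)}(V^{\otimes n})$), so they cannot be used to prove it.
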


\begin{proof}
The set in (a) is linearly independent, since its elements are
pairwise orthogonal. Hence $\cc_\lambda \le \dim_\Bbbk
C(\lambda)$. Thanks to the hypothesis, each maximal vector in the set
generates an isomorphic copy of $V(\lambda)$. Thus $V^{\otimes n}
\supseteq \textstyle \bigoplus_{p \in \mathcal{P}(n)} \Schur(n)
\omax(p)$. By Corollary~\ref{c:path-mult} the dimensions coincide, so
the inclusion is actually equality. This proves (b), and also the
reverse inequality in (a).
  
To prove (c), suppose that $b=\omax(p)$, $b'=\omax(p')$ are maximal
vectors of the same weight $k$. By Lemma~\ref{l:simple}, $\{F^a b: 0
\le a \le k\}$ is a basis of the simple module $\Schur(n)\omax(p)
\cong V(k)$. The basis elements all have different weights, so are
orthogonal. Furthermore, $\bil{F^a b}{F^c b'} = 0$ unless $a=c$ for
the same reason. Finally, applying adjointness
(Lemma~\ref{l:adjointness}) to $\bil{F^ab}{F^ab'}$ gives
\[
\bil{F^ab}{F^ab'} = v^{k-2a+1} \bil{EF^ab}{F^{a-1}b'}
\]
which by Lemma~\ref{l:simple} is the same as $[a][k+1-a]
\bil{F^{a-1}b}{F^{a-1}b'}$, up to a power of $v$. By induction it
follows that $\bil{F^ab}{F^ab'}$ is, up to a power of $v$, equal to
$[a]![k][k-1]\cdots[k-a+1] \bil{b}{b'}$. This completes the proof of
(c).

To prove (d), set $\lambda' = (\lambda_1-1,\lambda_2)$, $\lambda'' =
(\lambda_1,\lambda_2-1)$. By \eqref{e:C-decomp} we have the
decomposition
\[
C(\lambda) = \Phi_1 C(\lambda') \oplus \Phi_2 C(\lambda'')
\]
where the first or second factor is omitted if $\lambda'$ or
$\lambda''$ is not a partition.  By Lemma \ref{l:commute-Phi} and the
inductive hypothesis, the above is a decomposition as
$\TL_{n-1}$-modules. We note that $\Phi_1$, $\Phi_2$ induce respective
isomorphisms $C(\lambda') \cong \Phi_1 C(\lambda')$, $C(\lambda'')
\cong \Phi_2 C(\lambda'')$ as $\TL_{n-1}$-modules. By the inductive
hypothesis $C(\lambda') \ncong C(\lambda'')$ as $\TL_{n-1}$-modules.

Assume for a contradiction that there is a is a non-trivial
$\TL_n$-submodule $D$ of $C(\lambda)$. Then $0 \ne
\Hom_{\TL_n}(D,C(\lambda))$. By restriction to $\TL_{n-1}$, we have
\[
0 \ne \Hom_{\TL_{n-1}}(D,C(\lambda)).
\]
By Schur's Lemma and the fact that $C(\lambda') \ncong C(\lambda'')$,
it follows that either $D \cong \Phi_1 C(\lambda')$ or $D \cong
\Phi_2C(\lambda'')$ as $\TL_{n-1}$-modules. But this leads to a
contradiction, as the explicit formula for the action of $e_n$ shows
that neither $\Phi_1C(\lambda')$ nor $\Phi_2C(\lambda'')$ is a
$\TL_n$-module. Hence $C(\lambda)$ is a simple $\TL_n$-module, as
required. Furthermore, for two elements $\lambda \ne \mu$ of
$\Lambda(n)$, it is clear that $C(\lambda) \ncong C(\mu)$ as
$\TL_n$-modules, as they are not isomorphic on restriction to
$\TL_{n-1}$. Part (d) is proved.

Now let $Y$ be the image of the representation $\TL_n
\to \End_\Bbbk(V^{\otimes n})$, and let $Z
= \End_{\Schur(n)}(V^{\otimes n})$, the commutant of the
$\Schur(n)$-action. The commutativity of the actions of $\Schur(n)$,
$\TL_n$ (Lemma~\ref{l:actions-commute}) implies that $Y \subseteq Z$.
The semisimplicity of $\Schur(n)$ and standard double-commutant theory
implies that $Z$ is semisimple, $\{C(\lambda) \mid \lambda \in
\Lambda(n)\}$ is a complete set of pairwise non-isomorphic simple
$Z$-modules, and $\dim_\Bbbk Z = \sum_{\lambda \in \Lambda(n)}
\cc_\lambda^2$.

On the other hand, by (c) we know that the $C(\lambda)$'s are pairwise
non-isomorphic simple modules for the semisimple algebra $Y/(\text{rad
} Y)$, so
\[
\dim_\Bbbk Y \ge \dim_\Bbbk (Y/(\text{rad } Y)) \ge \textstyle
\sum_{\lambda \in \Lambda(n)} \cc_\lambda^2 = \dim_\Bbbk Z.
\]
Combining this with the opposite inequality $\dim_\Bbbk Y \le
\dim_\Bbbk Z$ coming from the inclusion $Y \subseteq Z$ proves that $Y
= Z$ (and $\text{rad } Y = 0$). In other words, $Y
= \End_{\Schur(n)}(V^{\otimes n})$ and the induced morphism from
$\TL_n$ to $\End_{\Schur(n)}(V^{\otimes n})$ is surjective, so (e) is
proved. Finally, $\dim_\Bbbk \TL_n = \sum_{\lambda \in \Lambda(n)}
\cc_\lambda^2$, by the combinatorial identity
\[
  \frac{1}{n+1}\binom{2n}{n} = \sum_{0 \le l
  \le n:\atop l \equiv n \text{ mod } 2} \left(\binom{n}{l} -
  \binom{n}{l-1}\right)^2.
\]
This shows that in fact the induced morphism $\TL_n \to Z
= \End_{\Schur(n)}(V^{\otimes n})$ is an isomorphism. In particular,
$\TL_n$ acts faithfully on $V^{\otimes n}$, so (f) is proved.  The
remaining claims (g), (h) now follow immediately by standard
arguments.
\end{proof}

\begin{rmk}
It makes sense to set $v=1$ in the above results. Then $\Schur(n)$
becomes a classical Schur algebra, and $\TL_n(\delta) =
\TL_n(\pm2)$. The condition $[n]! \ne 0$ becomes $n!  \ne 0$. In
particular, $\TL_n(\pm 2)$ is semisimple if the characteristic of
$\Bbbk$ is zero or greater than $n$.
\end{rmk}

\iffalse %%%%%%%%%%%%%%%%%%%%%%%%%%%%%%%%%%%%%%%%%%%%%%%%%%%%%%%%%%%%%
\begin{rmk}
An alternative approach to the proof of part (d) in Theorem
\ref{t:main} goes as follows. Using the same notations $Y$, $Z$ as
defined in the above argument, observe that the endomorphisms
$\dot{e}_i$ satisfy the defining relations of $\TL_n$, so there is a
surjective algebra morphism $\TL_n$ onto the subalgebra of $Z$
generated by the $\dot{e}_i$. This morphism is given by sending $e_i
\mapsto \dot{e}_i$ for all $i$. Now it suffices to show that the
$\dot{e}_i$ generate $Z$, as the morphism will then be surjective.

How can we show this? We can use a counting argument. Observe that
every diagram in $\TL_n$ induces an endomorphism in $Z$, obtained by
writing the diagram in terms of the generators and applying this map.
It suffices to show that the set of endomorphisms so constructed is
linearly independent, by dimension considerations. That is, it
suffices to know that the action of $\TL_n$ is faithful. However, I
get stuck here, as I don't see any easy argument to show the action is
faithful.
\end{rmk}
\fi %%%%%%%%%%%%%%%%%%%%%%%%%%%%%%%%%%%%%%%%%%%%%%%%%%%%%%%%%%%%%%%%%%%%

%%%%%%%%%%%%%%%%%%%%%%%%%%%%%%%%%%%%%%%%%%%%%%%%%%%%%%%%%%%%%%%%%%%%%%%

\section{A second construction of maximal vectors}\label{s:2nd}\noindent
Let $\Bbbk$ be a field.  \emph{Assume throughout this section that $0
\ne v \in \Bbbk$ is such that $[n]! \ne 0$, so that $\Schur(n)$ is
semisimple}.  Now we explore a second way to construct maximal
vectors, based on the fact that a tensor product of maximal vectors is
again maximal.  We already know that $y_1$ and $z_0 = y_{1,-1} - v
y_{-1,1}$ are maximal vectors in tensor degrees $1$ and $2$,
respectively. The vector $y_1$ is the unique (up to scalar) maximal
vector in $V = V^{\otimes 1}$, and the vectors $y_{1,1}$, $z_0$ are a
basis for the set of maximal vectors in $V^{\otimes 2}$.

If $b$ is a maximal vector in some finite dimensional
$\Schur(n)$-module then $b$ is $\Schur(n)$-invariant $\iff Fb=0 \iff
i_b=0$.  The following result constructs new invariants from existing
ones.

\begin{lem}\label{l:nesting}
  Suppose that $0 \ne v \in \Bbbk$ is such that $[n]! \ne 0$. Let $b$
  be a maximal vector in $V^{\otimes k}$, where $k \le n$.
  \begin{enumerate}
  \item $b$ is $\Schur(n)$-invariant $\iff \Phi_2(b)=0$.

  \item If $b$ is an $\Schur(n)$-invariant then so also is
  \[
  \Psi(b) := \Phi_2(y_1 \otimes b) = y_1 \otimes b \otimes y_{-1} - v
  y_{-1} \otimes b \otimes y_1.
  \]
  \end{enumerate}
\end{lem}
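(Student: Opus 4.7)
The plan is to handle (a) by interpreting $\Phi_2(b)$ as the formula $[i_b]\, b \otimes y_{-1} - v^{i_b}\, Fb \otimes y_1$ evaluated at the actual value of $i_b$ (this makes sense for all $i_b \ge 0$), and (b) by a direct application of Lemma~\ref{l:maximal} together with the $\Delta(F)$ formula in Theorem~\ref{t:tensors}.

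For part (a), first recall that since $b$ is maximal, $b$ is $\Schur(n)$-invariant iff $Fb = 0$. Because $b$ generates a simple submodule isomorphic to $V(i_b)$ by Lemma~\ref{l:simple}, and $[n]! \ne 0$ ensures $[i_b]! \ne 0$, the condition $Fb = 0$ is equivalent to $i_b = 0$. Now the tensors $b \otimes y_{-1}$ and $Fb \otimes y_1$ lie in the two orthogonal summands $V^{\otimes k}\otimes \Bbbk y_{-1}$ and $V^{\otimes k}\otimes \Bbbk y_1$, so they are linearly independent whenever both are nonzero. Hence $\Phi_2(b) = 0$ forces $[i_b] b = 0$ and $Fb = 0$; since $b \ne 0$ and $[i_b] = 0 \iff i_b = 0$ under our hypothesis, this gives $i_b = 0$ and $Fb = 0$, i.e.\ invariance. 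The converse is immediate from the formula, with both terms vanishing at $i_b = 0$.

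For part (b), observe that $y_1$ is a maximal vector of weight $1$ and $b$ is a maximal vector of weight $0$ (by part (a)); by the special case of Lemma~\ref{l:maximal}(a) noted in its proof, $y_1 \otimes b$ is then a maximal vector of weight $1$. Since this weight is positive, Lemma~\ref{l:maximal}(b) applies, so $\Psi(b) := \Phi_2(y_1 \otimes b)$ is a maximal vector of weight $0$, which is $\Schur(n)$-invariant by part (a). It then suffices to verify the closed formula: using $[1] = 1$ and applying $\Delta(F) = F \otimes K^{-1} + 1 \otimes F$ from Theorem~\ref{t:tensors},
\[
F(y_1 \otimes b) = Fy_1 \otimes K^{-1} b + y_1 \otimes Fb = y_{-1} \otimes b,
\]
because $Fb = 0$ and $K^{-1}b = b$ (weight~$0$). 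Substituting into the defining formula for $\Phi_2$ yields the displayed expression for $\Psi(b)$.

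There is no real obstacle here; the only subtlety is the convention for $\Phi_2$ at weight $0$ used in (a), which is resolved by evaluating the defining formula directly rather than by appealing to the statement of Lemma~\ref{l:maximal}(b) (whose hypothesis $i_b > 0$ is not met). Everything else is a short computation using the coproduct and the adjointness/weight machinery already established.
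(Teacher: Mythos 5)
Your proof is correct and follows essentially the same route as the paper's: part (a) via the linear independence of $b\otimes y_{-1}$ and $Fb\otimes y_1$ in complementary summands (plus $[i_b]\ne 0$ for $0<i_b\le n$), and part (b) by noting $y_1\otimes b$ has weight $1$ and computing $F(y_1\otimes b)=y_{-1}\otimes b$ from the coproduct. The extra details you supply (the equivalence $Fb=0\iff i_b=0$ via Lemma~\ref{l:simple}, and the explicit use of $\Delta(F)$) are consistent with what the paper leaves implicit.
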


\begin{proof}
(a)
The linear independence of the tensors $b\otimes y_{-1}$, $Fb \otimes
y_1$ implies that $\Phi_2(b) = [i_b]b\otimes y_{-1} - v^{i_b}Fb
\otimes y_1 = 0$ if and only if $i_b=0$ and $Fb=0$.

(b) The vector $y_1 \otimes b$ has weight $1$, so by
Theorem~\ref{t:tensors} and the definition of $\Phi_2$ we have
\begin{align*}
\Phi_2(y_1\otimes b) &= y_1\otimes b \otimes y_{-1} - v F(y_1\otimes b)
\otimes y_1\\
&= y_1\otimes b \otimes y_{-1} - v y_{-1} \otimes b \otimes y_1
\end{align*}
since $Fb=0$, and the result is proved.
\end{proof}

\begin{rmk}
The vector $\Psi(b)$ in Lemma~\ref{l:nesting} is constructed from the
formula $z_0 = \Phi_2(y_1) = y_1\otimes y_{-1} - v y_{-1} \otimes y_1$
by inserting a copy of $b$ between its two tensor factors in each
term. We refer to this construction as \emph{nesting} and we say that
$b$ is nested within $\Psi(b)$.
\end{rmk}

It will turn out that (at least in the semisimple case) every
$\Schur(n)$-invariant is a linear combination of ones obtained from
invariants in lower tensor degree by either tensoring or nesting.
Furthermore, we will show that any maximal vector is a linear
combination of maximal vectors constructed by tensoring
$\Schur(n)$-invariants with copies of $y_1$, in any order.

It will be useful to have a diagrammatic calculus for depicting
maximal vectors constructed by the above process.  To start, depict
the maximal vector $1$ in $\Bbbk = V^{\otimes 0}$ by an empty symbol,
and depict the maximal vector $y_1$ in $V = V^{\otimes 1}$ by a dot
with a hanging line segment $\ypic$. Tensor products of maximal
vectors are depicted by juxtaposition. Finally, if $b$ is an invariant
then we depict the invariant $\Psi(b)$ by nesting the depiction of $b$
within a link of the form $\link$.  (In particular, $\link$ depicts
the invariant $z_0 = \Psi(1)$ in $V^{\otimes 2}$.) Any diagram
obtained by juxtaposing nestings and segments $\ypic$ in any order is
called a \emph{link diagram}.  The segments are often called
\emph{defects} in the literature.  If $d$ is a link diagram, let
$\nat(d)$ be the tensor product (in the given order) of the elements
$y_1$ corresponding to defects and invariants $\Psi(b)$ corresponding
to link nestings, constructed recursively by the process of
Lemma~\ref{l:nesting}.

\begin{example}\label{ex:link-dia}
The link diagram $d$ displayed below
\[
\begin{tikzpicture}[scale = 0.35,thick, baseline={(0,-1ex/2)}] 
  \tikzstyle{vertex} = [shape = circle, minimum size = 4pt, inner sep = 1pt,
    fill=black] 
\node[vertex] (G-1) at (0.0, 1) [shape = circle, draw] {}; 
\node[vertex] (G-2) at (1.5, 1) [shape = circle, draw] {}; 
\node[vertex] (G-3) at (3.0, 1) [shape = circle, draw] {}; 
\node[vertex] (G-4) at (4.5, 1) [shape = circle, draw] {}; 
\node[vertex] (G-5) at (6.0, 1) [shape = circle, draw] {}; 
\node[vertex] (G-6) at (7.5, 1) [shape = circle, draw] {};
\node[vertex] (G-7) at (9.0, 1) [shape = circle, draw] {};
\node[vertex] (G-8) at (10.5, 1) [shape = circle, draw] {};
\draw[] (G-8) -- (10.5,0); 
\draw[] (G-1) -- (0,0); 
\draw[] (G-2) .. controls +(1, -1) and +(-1, -1) .. (G-7); 
\draw[] (G-3) .. controls +(0.5, -0.5) and +(-0.5, -0.5) .. (G-4); 
\draw[] (G-5) .. controls +(0.5, -0.5) and +(-0.5, -0.5) .. (G-6); 
\end{tikzpicture}
\]
with $8$ vertices, $3$ links, and $2$ defects depicts the maximal
vector
\[
\nat(d)=  y_1 \otimes \Psi(z_0 \otimes z_0) \otimes y_1
\]
in tensor degree $8$.
\end{example}

A $1$-\emph{factor} is a sequence $\alpha = (\alpha_1, \dots,
\alpha_n)$ such that each $\alpha_i = \pm 1$ and the partial sums
$\alpha_1 + \cdots + \alpha_i \ge 0$ for all $i$. For each $i$ with
$\alpha_i=1$, let $j$ be the smallest index (if any) for which $i < j
\le n$ and $\alpha_i + \cdots + \alpha_j = 0$. Whenever this happens,
the indices $(i,j)$ are said to be \emph{paired}; otherwise the index
$i$ is \emph{unpaired}. Unpaired indices are also called
\emph{defects}.

\begin{lem}\label{l:bijections}
  For any $n$, $p$ such that $0 \le 2p \le n$, the following sets are
  all in bijective correspondence with one another:
  \begin{enumerate}
  \item The set $\mathcal{P}(n-p,p)$ of walks in the Bratteli diagram
    from $\emptyset$ to $(n-p,p)$.
  \item The set of link diagrams on $n$ vertices with $p$ links.
  \item The set of $1$-factors of length $n$ with $p$ pairings.
  \item The set of standard tableaux of shape $(n-p,p)$.
  \end{enumerate}
\end{lem}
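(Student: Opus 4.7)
The plan is to use set (c), the 1-factors, as a central hub and give explicit maps from it to each of (a), (b), and (d), verifying bijectivity either by exhibiting inverses or by comparing cardinalities. All four sets will turn out to have common cardinality $\cc_{(n-p,p)} = \binom{n}{p} - \binom{n}{p-1}$, so whenever an injection is clear, surjectivity follows from the ballot-number count established in Section~\ref{s:orthmv}.

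For (c) $\leftrightarrow$ (a), given a $1$-factor $\alpha = (\alpha_1,\dots,\alpha_n)$, I would build a walk $\emptyset = \mu^{(0)}, \mu^{(1)}, \dots, \mu^{(n)}$ inductively, adding a box to row $1$ of $\mu^{(i-1)}$ (vertical edge in the Bratteli diagram) when $\alpha_i = +1$ and a box to row $2$ (diagonal edge) when $\alpha_i=-1$. The partial-sum condition $\alpha_1+\cdots+\alpha_i\ge 0$ says that at every step the length of row $1$ is at least the length of row $2$, which is precisely the condition for $\mu^{(i)}$ to be a partition with at most two rows. Since $\alpha$ contains $p$ entries $-1$ and $n-p$ entries $+1$, the walk terminates at $(n-p,p)$. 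The inverse reads off the step type at each level of the walk.

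For (c) $\leftrightarrow$ (d), I would send a $1$-factor $\alpha$ to the unique filling of the Young diagram of shape $(n-p,p)$ that places $i$ in row $1$ when $\alpha_i=+1$ and in row $2$ when $\alpha_i=-1$, entries within each row appearing in their natural order. Rows are increasing by construction; column-strictness is equivalent to the condition that in every prefix of $\alpha$ the number of $+1$'s is at least the number of $-1$'s, so the partial-sum condition in (c) matches exactly the standardness of the tableau. (This step is essentially identical to (c)$\leftrightarrow$(a), since standard tableaux of two-row shapes are themselves encoded by such walks, which gives a direct (a)$\leftrightarrow$(d) bijection as a bonus.)

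For (c) $\leftrightarrow$ (b), from a $1$-factor I would construct a link diagram on $n$ vertices by joining the $i$-th and $j$-th vertices by an arc whenever $(i,j)$ is a paired pair of $\alpha$ and attaching a pendant segment $\ypic$ at the $i$-th vertex whenever $i$ is a defect. The inverse assigns $\alpha_i = +1$ if vertex $i$ is either a defect or the left endpoint of a link, and $\alpha_i=-1$ if vertex $i$ is the right endpoint of a link. The main obstacle here is verifying that the arcs produced by the pairing rule in (c) are automatically non-crossing (so that the result is a genuine link diagram in the sense of Example~\ref{ex:link-dia}): if $i<i'<j<j'$ with $(i,j)$ and $(i',j')$ both paired, then by definition of pairing $j$ is the smallest index after $i$ with partial sum returning to the level at $i-1$, forcing $\alpha_i+\cdots+\alpha_j=0$; this makes the partial sum at $i'-1$ strictly greater than at $j$, contradicting $\alpha_{i'}+\cdots+\alpha_{j'}=0$ with $j<j'$, a routine check. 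Closing the loop of four bijections completes the proof.
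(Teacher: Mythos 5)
Your proposal is correct and follows essentially the same route as the paper: the paper also uses the $1$-factors as the hub, matching vertical/diagonal Bratteli edges with $\pm1$, labeling left/right endpoints of links by $1$/$-1$ and defects by $1$, and placing the indices of the $+1$ (resp.\ $-1$) entries into the first (resp.\ second) row of the tableau. You simply spell out the routine verifications (the prefix-sum condition matching the partition/standardness conditions, and the non-crossing of the arcs) that the paper leaves implicit.
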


\begin{proof}
The bijection between the sets in (a), (c) is obtained by matching
vertical (resp., diagonal) edges in a Bratteli walk with $1$ (resp.,
$-1$).  The bijection between the sets in (b), (c) is obtained by
labeling linked vertices in a link diagram by $1$ and $-1$ on the
left and right, resp., and labeling all defect vertices by $1$.
The bijection between the sets in (c), (d) comes from writing
the indices $i$ such that $\alpha_i=1$ (resp., $\alpha_i=-1$) into the first
(resp., second) row of a Young diagram of shape $(n-p,p)$. For
instance, the tableau
\[
\small \young(12358,467) 
\]
corresponds to the $1$-factor $\alpha=(1,1,1,-1,1,-1,-1,1)$.  Note
that the indices are entered in the tableau in order in each row from
left to right, so this always produces a standard tableau.
\end{proof}

Henceforth, we are free to index maximal vectors using any of the
equivalent indexing sets in Lemma~\ref{l:bijections}.  In other words,
if $p$, $d$, $\alpha$, $t$ are corresponding elements in the sets in parts
(a)--(d) of the lemma, then we may use any of the notations
\[
\nat(p)=\nat(d)=\nat(\alpha)=\nat(t)
\]
interchangeably for the corresponding maximal vector. The same
convention applies to the $\omax$ vectors defined in the previous
section. In the following results we use the set of $1$-factors as our
preferred indexing set.

We note the following explicit formula (which makes sense without any
hypothesis on $v$) for the expansion of a $\nat(\alpha)$ in terms of
the basis $y_{i_1,\dots,i_l}$.

\begin{lem}\label{l:N-formula}
  Suppose that $\alpha$ is a $1$-factor of length $l$.  Let
  $\mathcal{S}(\alpha)$ be the set of all sequences obtainable from
  $\alpha$ by switching the signs of any number of pairs of paired
  entries in $\alpha$. Then
  \[
  \nat(\alpha) = \sum_{i=(i_1,\dots, i_l) \in \mathcal{S}(\alpha)}
  (-v)^{\sigma(\alpha,i)} y_{i_1, \dots, i_l}
  \]
  where $\sigma(\alpha,i)$ is the minimum number of such sign
  interchanges in getting from $\alpha=(\alpha_1,\dots, \alpha_l)$ to
  $i=(i_1,\dots,i_l)$ and $y_i = y_{i_1, \dots, i_l} = y_{i_1} \otimes
  \cdots \otimes y_{i_l}$.
\end{lem}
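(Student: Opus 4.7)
The plan is to induct on the length $l$ of $\alpha$, using the recursive construction of $\nat(\alpha)$ from the corresponding link diagram $d$. The base case $l=0$ is the empty product ($\nat = 1$, with $\mathcal{S}(\alpha) = \{\emptyset\}$), and $l=1$ gives $\alpha=(1)$, $\nat = y_1$, with a single term of sign $+1$. For the inductive step, look at the rightmost top-level component of $d$: it is either a defect or an outermost link.

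\emph{Case 1: the rightmost top-level component is a defect at position $l$.} Then $\alpha_l=1$ is unpaired and $\alpha = (\alpha', 1)$, where $\alpha'$ is the $1$-factor for the diagram $d'$ obtained by deleting this defect. By construction $\nat(\alpha) = \nat(\alpha')\otimes y_1$. The pairs of $\alpha$ are exactly the pairs of $\alpha'$, so $\mathcal{S}(\alpha)=\{(i',1)\mid i'\in\mathcal{S}(\alpha')\}$ and $\sigma(\alpha,(i',1))=\sigma(\alpha',i')$. Tensoring the inductive expansion of $\nat(\alpha')$ with $y_1$ on the right yields the claimed formula.

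\emph{Case 2: the rightmost top-level component is an outermost link, spanning positions $k$ and $l$, enclosing a sub-diagram $b$.} Then $b$ is an invariant, with $1$-factor $\beta$ of length $l-k-1$, and the piece $d''$ to the left of the link has $1$-factor $\alpha''$ of length $k-1$. Thus $\alpha = (\alpha'',\,1,\,\beta,\,-1)$, and by the construction together with Lemma~\ref{l:nesting}(b),
\[
\nat(\alpha)=\nat(\alpha'')\otimes\Psi(\nat(\beta))
= \nat(\alpha'')\otimes y_1\otimes \nat(\beta)\otimes y_{-1}
-v\,\nat(\alpha'')\otimes y_{-1}\otimes \nat(\beta)\otimes y_1.
\]
The paired-index set of $\alpha$ is the disjoint union of the paired-index set of $\alpha''$, that of $\beta$, and the outermost pair $(k,l)$; consequently $\mathcal{S}(\alpha)$ is naturally in bijection with $\mathcal{S}(\alpha'')\times\mathcal{S}(\beta)\times\{0,1\}$, where the last factor records whether the outermost pair is flipped, and $\sigma$ is the sum of the three corresponding contributions. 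Substituting the inductive expansions of $\nat(\alpha'')$ and $\nat(\beta)$ into the display above and distributing tensors, the first summand indexes those $i\in\mathcal{S}(\alpha)$ in which the outermost pair is not flipped, while the second summand (with its prefactor $-v$) indexes those in which it is, contributing exactly the extra factor of $-v$ required by $\sigma(\alpha,i)$. Reassembling the terms gives the claimed formula for $\nat(\alpha)$.

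The only real subtlety, and hence the main thing to verify carefully, is that the combinatorial decomposition of pairs is compatible with the recursive diagram structure: every pair of $\alpha$ lies either entirely in $\alpha''$, entirely in $\beta$, or is the outermost pair itself, with no pair crossing the boundaries. This is immediate from the fact that link diagrams are noncrossing, so the nesting of pairs mirrors the nesting of the $\Psi$-construction. Once this is noted, the uniqueness of $i$ as a record of which pairs were flipped (pairs occupy disjoint index sets) shows that $\sigma(\alpha,i)$ really is the number of flipped pairs, and the induction goes through.
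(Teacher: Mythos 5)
Your proof is correct and follows essentially the same route as the paper: the paper's proof is the one-line remark that the formula ``follows directly from the definition of $\nat(\alpha)$ and the formula in Lemma~\ref{l:nesting},'' and your induction on the rightmost top-level component (defect versus outermost link) is exactly the careful unwinding of that recursive definition. The point you single out as the main thing to verify --- that the noncrossing structure makes every pair lie entirely in one recursive piece, so the flipped pairs are recorded unambiguously by $i$ and $\sigma(\alpha,i)$ is just their count --- is indeed the only content beyond bookkeeping, and you handle it correctly.
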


\begin{proof}
This follows directly from the definition of $\nat(\alpha)$ and the
formula in Lemma \ref{l:nesting}.
\end{proof}

If $\alpha$ is a $1$-factor then we let $i_\alpha = \sum \alpha_j$ be
the sum of its entries. This is the same as the weight of $b =
\nat(\alpha)$, the same as the weight of $b' = \omax(\alpha)$,
and the same as the number of defects in the link diagram
corresponding to $\alpha$.  Let $||$ denote the usual juxtaposition
operation on sequences, defined by
\[
(\alpha_1,\dots,\alpha_k) || (\beta_1, \dots, \beta_l) =
(\alpha_1, \dots, \alpha_k, \beta_1, \dots, \beta_l).
\]
Furthermore, define
\[
\alpha^+ := \alpha||(1) \quad\text{and}\quad \alpha^- := \alpha||(-1).
\]
If $\alpha$ is a $1$-factor then so is $\alpha^+$, but in order to
have $\alpha^-$ also be a $1$-factor it is necessary that $i_\alpha >
0$.

If $\alpha$ is a $1$-factor with $i_\alpha > 1$, write $\alpha^{(j)}$
for the $1$-factor obtained by linking its $j$th defect with the next
one. (If the two defects in question appear in positions $j<k$ then
this linking amounts to setting $\alpha_k= -1$.)

\begin{thm}\label{t:Phi-on-N}
Let $\alpha$ be a $1$-factor of length less than $n$ with $d=i_\alpha$
defects in its corresponding link diagram, and assume that $d>0$ in
part \upshape{(b)}. Then:
\begin{enumerate}
\item $\Phi_1 \nat(\alpha) = \nat(\alpha^+)$.
\item $\Phi_2 \nat(\alpha) = \sum_{j=1}^d
  [j]\nat(\alpha^{+(j)}) = [d]\nat(\alpha^-) + \sum_{j=1}^{d-1}
  [j]\nat(\alpha^{(j)+})$.
\end{enumerate}
\end{thm}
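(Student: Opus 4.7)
The plan is to dispatch part (a) directly and to prove part (b) by induction on the length $l$ of $\alpha$. For (a), since $\Phi_1(b)=b\otimes y_1$ and appending a $+1$ to a $1$-factor produces an unpaired terminal defect (no later $-1$ is available), the link diagram of $\alpha^+$ is obtained from that of $\alpha$ by adjoining one extra defect on the right; the recursive definition of $\nat$ then gives $\nat(\alpha^+) = \nat(\alpha)\otimes y_1 = \Phi_1\nat(\alpha)$.

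For (b), let $k$ be the position of the rightmost defect of $\alpha$ and write $\alpha = \beta\|(1)\|\gamma$ with $|\beta|=k-1$. Any $+1$ lying in $\gamma$ would be a defect past position $k$, contradicting the choice of $k$, so $\gamma$ is a Dyck sequence; hence $\nat(\gamma)$ is an $\Schur(n)$-invariant, and in particular $F\nat(\gamma)=0$. Because no pairings cross either separator, $\nat(\alpha)=\nat(\beta)\otimes y_1\otimes\nat(\gamma)$, and $\beta$ is a $1$-factor of length $<l$ with $d-1$ defects, to which the induction hypothesis will apply.

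Two key computations then drive the induction step. First, unfolding $\nat(\alpha^-)=\nat(\beta)\otimes\Psi(\nat(\gamma))$ via $\Psi(b)=y_1\otimes b\otimes y_{-1}-v\,y_{-1}\otimes b\otimes y_1$ yields
\[
\nat(\alpha)\otimes y_{-1} = \nat(\alpha^-) + v\,\nat(\beta)\otimes y_{-1}\otimes\nat(\gamma)\otimes y_1.
\]
Second, iterating Theorem~\ref{t:tensors} gives $\Delta^{(2)}F = F\otimes K^{-1}\otimes K^{-1} + 1\otimes F\otimes K^{-1} + 1\otimes 1\otimes F$, and applying this to $\nat(\alpha)$, using $K^{-1}\nat(\gamma)=\nat(\gamma)$ (weight $0$) together with $F\nat(\gamma)=0$, produces
\[
F\nat(\alpha) = v^{-1} F\nat(\beta)\otimes y_1\otimes \nat(\gamma) + \nat(\beta)\otimes y_{-1}\otimes\nat(\gamma).
\]
Substituting into $\Phi_2\nat(\alpha)=[d]\nat(\alpha)\otimes y_{-1}-v^d F\nat(\alpha)\otimes y_1$ and applying the elementary identity $v[d]-v^d=[d-1]$, the expression collapses to
\[
\Phi_2\nat(\alpha) = [d]\,\nat(\alpha^-) + \bigl(\Phi_2\nat(\beta)\bigr)\otimes\nat(\gamma)\otimes y_1.
\]

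The induction hypothesis expands $\Phi_2\nat(\beta) = [d-1]\nat(\beta^-) + \sum_{j=1}^{d-2}[j]\nat(\beta^{(j)+})$, with the convention $\Phi_2\nat(\emptyset)=0$ handling the base case $d=1$. The final step is a combinatorial identification: for $1\le j\le d-2$ the modification $\beta^{(j)}$ is interior to $\beta$, so the three link diagrams combine cleanly into $\nat(\beta^{(j)+})\otimes\nat(\gamma)\otimes y_1 = \nat(\alpha^{(j)+})$; and the link created in $\beta^-$ between the last defect of $\beta$ and its appended $-1$ becomes exactly the link between the $(d-1)$-st and $d$-th defects of $\alpha$ in $\alpha^{(d-1)+}$, giving $\nat(\beta^-)\otimes\nat(\gamma)\otimes y_1 = \nat(\alpha^{(d-1)+})$. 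Assembling these yields the second (and thus also the first) stated formula. I expect the main obstacle to be precisely this link-diagram bookkeeping — verifying that the diagrams factor cleanly across all three separators in every configuration, including the degenerate cases $\beta=\emptyset$ and $\gamma=\emptyset$ — while the algebraic backbone of the argument is the single quantum-integer identity $v[d]-v^d=[d-1]$.
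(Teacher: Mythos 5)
Your proof is correct, but it takes a genuinely different route from the paper's. The paper first establishes part (b) in the pure-defect case $\alpha=(1^d)$ by an explicit telescoping computation of $F(y_1^{\otimes d})$ using the identity $v^{m+1}=v[m+1]-[m]$, and then passes to general $\alpha$ by induction on the number of invariant tensor factors, checking that inserting an invariant factor $b_2$ between two tensor positions does not disturb the computation of $\Phi_2$. You instead induct on the length of $\alpha$, splitting at the rightmost defect as $\alpha=\beta||(1)||\gamma$ with $\gamma$ balanced, and collapse everything to the single reduction $\Phi_2\nat(\alpha)=[d]\,\nat(\alpha^-)+\bigl(\Phi_2\nat(\beta)\bigr)\otimes\nat(\gamma)\otimes y_1$; your identity $v[d]-v^d=[d-1]$ is the same quantum-integer identity the paper uses, in disguise. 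Your version treats the pure-defect and general cases uniformly and makes the coefficient $[d]$ on $\nat(\alpha^-)$ structurally transparent, at the price of the link-diagram bookkeeping you anticipate. One spot needs tightening: your argument that $\gamma$ is balanced only rules out unpaired $+1$'s in $\gamma$; you must also rule out a $-1$ in $\gamma$ being paired with a $+1$ in $\beta$, i.e.\ an arc enclosing the defect at position $k$. This is true --- if such an arc $(i,j)$ existed, the partial sums would have to descend from their value at $k$ to the value at $i-1$, hence pass through the level just below $k$, which would pair the index $k$ and contradict its being a defect --- and it is exactly this fact that underwrites both factorizations $\nat(\alpha)=\nat(\beta)\otimes y_1\otimes\nat(\gamma)$ and $\nat(\alpha^-)=\nat(\beta)\otimes\Psi(\nat(\gamma))$, as well as the terminal identification $\nat(\beta^-)\otimes\nat(\gamma)\otimes y_1=\nat(\alpha^{(d-1)+})$, so it deserves an explicit line rather than the one-clause dismissal.
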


\begin{proof}
(a) is immediate from the definition of $\nat(\alpha)$, $\nat(\alpha^+)$.

(b)
First suppose that $\alpha=(1^d)$, so that $b=\nat(1^d)=y_1^{\otimes
  d}$. The desired formula is then proved by expanding the coproduct
repeatedly using Theorem \ref{t:tensors}. We have
\[
Fb = F(y_1^{\otimes d}) = \sum_{m=0}^{d-1} y_1^{\otimes m} \otimes
y_{-1} \otimes v^{-(d-1-m)}y_1^{\otimes (d-1-m)}.
\]
Plugging this into the definition of $\Phi_2(b)$ yields
\begin{align*}
  \Phi_2(b) &= [d]b\otimes y_{-1} - v^dFb\otimes y_1 \\ &= [d]
  y_1^{\otimes d} \otimes y_{-1} - \sum_{m=0}^{d-1} v^{m+1}
  y_1^{\otimes m} \otimes y_{-1} \otimes y_1^{\otimes
    (d-m)}.\\
  \intertext{Replacing $v^{m+1}$ in the above by the right hand side of the
    (easily verified) formal identity
  $v^{m+1} = v[m+1] - [m]$ gives}
  &= [d]y_1^{\otimes d} \otimes y_{-1} - \sum_{m=0}^{d-1} (v[m+1] - [m])\,
  y_1^{\otimes m} \otimes y_{-1} \otimes y_1^{\otimes (d-m)}\\
  \intertext{and by combining the first and last terms we get}
  &= [d]\nat(1^d,-1) + [d-1] y_1^{\otimes(d-1)} \otimes y_{-1} \otimes y_1 \\
  &\hspace{1.05in} -
  \sum_{m=0}^{d-2} (v[m+1] - [m])\,
  y_1^{\otimes m} \otimes y_{-1} \otimes y_1^{\otimes (d-m)} 
\end{align*}
in which the expression after the first term on the right hand side
has the same form as the previous line, but with $d$ replaced by $d-1$.
Repeating the last simplification an appropriate number of times gives
the result.

In the general case, $b = \nat(\alpha)$ is a tensor product of $d$
copies of $y_1$ with a certain number of invariants, up to
reordering. The case of no invariant factors is already proved
above. If there is a single invariant factor then
\[
b = b_1 \otimes b_2 \otimes b_3,\quad  \text{where $b_1 = y_1^{\otimes m}$,
  $b_2$ is invariant, and $b_3 = y_1^{\otimes (d-m)}$}
\]
and where $0 \le m \le d$. Then by expanding the definition of $\Phi_2$,
\begin{align*}
  \Phi_2(b) &= [d] b \otimes y_{-1} - v^{d} Fb \otimes y_1\\ &=
      [d] b_1 \otimes b_2 \otimes b_3 \otimes y_{-1}\\ &\qquad\qquad
      - v^{d} (Fb_1 \otimes b_2 \otimes v^{-(d-m)} b_3 + b_1 \otimes
      b_2 \otimes Fb_3) \otimes y_1.
\end{align*}
This agrees exactly with the definition of $\Phi_2(b_1 \otimes b_3)$
if we insert a tensor factor of $b_2$ between the tensor positions
occupied by the inputs. This proves the first equality in part (b) in
the case $b$ has a single invariant factor. The general case is proved
similarly, by induction on the number of invariant factors in~$b$.

Finally, the last equality in (b) follows from the first by observing
that $\alpha^{+(d)} = \alpha^-$ and that $\alpha^{+(j)} =
\alpha^{(j)+}$ for all $j<d$.
\end{proof}

\begin{rmk}\label{r:special-cases}
Let $\alpha$ be a $1$-factor of length less than $n$. The following
special cases of previous results are notable:
\begin{enumerate}
\item[(i)] If $i_\alpha = 0$ then $\Phi_2 \nat(\alpha) = 0$.
\item[(ii)] If $i_\alpha = 1$ then $\Phi_2 \nat(\alpha) =
  \nat(\alpha^-)$.
\end{enumerate}
Formula (i) follows from Lemma~\ref{l:nesting}(a), and (ii)
from Theorem~\ref{t:Phi-on-N}(b).
\end{rmk}

The following result enables a recursive construction of all the
$\nat(\alpha)$, using the operators $\Phi_1$, $\Phi_2$.

\begin{cor}\label{c:recursion}
Let $\alpha$ be a $1$-factor of length less than $n$ with $d =
i_\alpha$ defects in its corresponding link diagram. Then:
\begin{enumerate}
\item $\nat(\alpha^+) = \Phi_1 \nat(\alpha)$.
\item If $d > 0$ and $[d] \ne 0$ then $\nat(\alpha^-) =
  \frac{1}{[d]} \Phi_2 \nat(\alpha) - \sum_{j=1}^{d-1}
  \frac{[j]}{[d]} \Phi_1 \nat(\alpha^{(j)})$.
\end{enumerate}
\end{cor}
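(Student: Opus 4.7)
The plan is to derive both parts directly from Theorem~\ref{t:Phi-on-N}, since the Corollary is essentially a re-arrangement of that theorem's identities.

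For part (a), there is nothing to prove: the identity $\nat(\alpha^+) = \Phi_1 \nat(\alpha)$ is precisely the statement of Theorem~\ref{t:Phi-on-N}(a). I would just cite it.

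For part (b), I would start from the second expression in Theorem~\ref{t:Phi-on-N}(b), namely
\[
\Phi_2 \nat(\alpha) = [d]\,\nat(\alpha^-) + \sum_{j=1}^{d-1} [j]\,\nat(\alpha^{(j)+}).
\]
Under the standing hypothesis that $[n]! \ne 0$ (and therefore $[d] \ne 0$ for $1 \le d \le n$), one may solve algebraically for $\nat(\alpha^-)$, obtaining
\[
\nat(\alpha^-) = \frac{1}{[d]}\,\Phi_2 \nat(\alpha) - \sum_{j=1}^{d-1} \frac{[j]}{[d]}\,\nat(\alpha^{(j)+}).
\]
Then I apply part (a) of the present Corollary (equivalently, Theorem~\ref{t:Phi-on-N}(a)) to each $1$-factor $\alpha^{(j)}$ to rewrite $\nat(\alpha^{(j)+}) = \Phi_1 \nat(\alpha^{(j)})$, which yields the claimed formula.

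The only point requiring care is that $\alpha^{(j)}$ must itself be a $1$-factor of length less than $n$ and that $\alpha^{(j)+}$ must make sense, so that part (a) applies. But $\alpha^{(j)}$ is obtained from $\alpha$ by pairing its $j$th defect with the next one (reducing the defect count by $2$), so it has the same length as $\alpha$ and in particular has length less than $n$; part (a) then applies without any further hypothesis. No obstacle is expected in this proof — it is a straightforward invocation of the preceding theorem.
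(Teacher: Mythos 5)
Your proposal is correct and matches the paper's own proof: the paper likewise observes that part (a) is immediate and that part (b) is just the second equality of Theorem~\ref{t:Phi-on-N}(b) solved for $\nat(\alpha^-)$, with $\nat(\alpha^{(j)+})$ rewritten as $\Phi_1\nat(\alpha^{(j)})$ via part (a). No gaps.
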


\begin{proof}
Part (a) is clear. Part (b) is just a reformulation of the second
equality in Theorem~\ref{t:Phi-on-N}(b), where we have applied (a) to
replace $\nat(\alpha^{(j)+})$ by $\Phi_1 \nat(\alpha^{(j)})$ for
each $j<d$.
\end{proof}

The set of $1$-factors of a given length is partially ordered by the
lexicographical order on their corresponding sequence of partial sums.
To be precise, if $\alpha$, $\beta$ are $1$-factors of the same
length, then
\begin{equation}
\beta \le \alpha \iff \beta_1 + \cdots + \beta_j \le \alpha_1 + \cdots
+ \alpha_j
\end{equation}
for each $j$. As $\alpha_1+\cdots + \alpha_j$ is the weight (number of
defects) of the prefix sequence $(\alpha_1, \dots, \alpha_j)$, we use
the terminology \emph{weight sequence} for the sequence of partial
sums associated to a $1$-factor $\alpha$. By definition of $1$-factor,
the elements of the weight sequence are always nonnegative integers.

It is clear that the $1$-factors appearing on the right
hand side of Theorem \ref{t:Phi-on-N}(b) are linearly ordered with
respect to this ordering:
\[
\alpha^{+(d)} > \alpha^{+(d-1)} > \cdots > \alpha^{+(1)}.
\]
Equivalently, we have
\[
\alpha^- > \alpha^{(d-1)+} > \cdots > \alpha^{(1)+}.
\]
This leads to the following.

\begin{cor}\label{c:triangular}
For any $1$-factor $\alpha$ of of length at most $n$, we have
\[
\nat(\alpha) = \textstyle \sum_{\beta \le \alpha} \pi_{\alpha,\beta} \omax(\beta)
\]
where $\pi_{\alpha,\alpha} \ne 0$ and where the sum is over all
$\beta$ of the same shape as $\alpha$ satisfying the inequality. Hence
the $\nat(\alpha)$ are related to the $\omax(\beta)$ by an
invertible triangular matrix $P = (\pi_{\alpha,\beta})$ and the set
of $\nat(\alpha)$ such that $\alpha$ has shape $\lambda$ is a
$\Bbbk$-basis for the simple module $C(\lambda)$.
\end{cor}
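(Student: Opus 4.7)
The plan is to proceed by induction on the length $l$ of $\alpha$, with trivial base case $l = 0$ where $\nat(\emptyset) = \omax(\emptyset) = 1$. For the inductive step, every $1$-factor of length $l+1$ has the form $\alpha^+$ or $\alpha^-$ for some $\alpha$ of length $l$ (with $i_\alpha > 0$ in the latter case), and I can apply the recursive formulas of Corollary~\ref{c:recursion}. The crucial observation is that $\omax$ satisfies the simpler recursion $\omax(\beta^+) = \Phi_1 \omax(\beta)$ and $\omax(\beta^-) = \Phi_2 \omax(\beta)$ directly from its construction via walks on the Bratteli diagram, together with $\Phi_2 \omax(\beta) = 0$ whenever $i_\beta = 0$ (by Lemma~\ref{l:nesting}(a)).

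For the $\alpha^+$ case, applying $\Phi_1$ to the inductive expansion of $\nat(\alpha)$ gives immediately
\[
\nat(\alpha^+) = \Phi_1 \textstyle\sum_{\beta \le \alpha} \pi_{\alpha,\beta}\, \omax(\beta) = \textstyle\sum_{\beta \le \alpha} \pi_{\alpha,\beta}\, \omax(\beta^+),
\]
and since $\beta \le \alpha$ if and only if $\beta^+ \le \alpha^+$, triangularity and nonvanishing of the diagonal coefficient are inherited. For the $\alpha^-$ case, with $d = i_\alpha > 0$, Corollary~\ref{c:recursion}(b) expresses $\nat(\alpha^-)$ as $[d]^{-1} \Phi_2\nat(\alpha)$ minus a combination of $\Phi_1 \nat(\alpha^{(j)})$ for $1 \le j < d$. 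Substituting the inductive expansions and applying the recursions for $\omax$ above, the result becomes a sum of terms $\omax(\beta^-)$ (with $\beta \le \alpha$, $i_\beta > 0$) and $\omax(\gamma^+)$ (with $\gamma \le \alpha^{(j)}$).

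The main technical step, and the place where care is required, is verifying the partial-sum ordering for all terms that appear. I will check directly that $\beta \le \alpha$ implies $\beta^- \le \alpha^-$ (the first $l$ partial sums are unchanged and both terminal weights drop by one), and that $\alpha^{(j)+} \le \alpha^-$ by comparing partial sums at each position: pairing the $j$th and $(j+1)$th defects of $\alpha$ only decreases partial sums between those positions, while both sequences terminate at weight $d-1$. This gives $\gamma^+ \le \alpha^{(j)+} \le \alpha^-$, so every term lies below $\alpha^-$ in the ordering. For the diagonal coefficient, no $\omax(\gamma^+)$ can equal $\omax(\alpha^-)$ (the final entries have opposite signs), so only the $\beta = \alpha$ term in $[d]^{-1} \Phi_2 \nat(\alpha)$ contributes, yielding $\pi_{\alpha^-,\alpha^-} = \pi_{\alpha,\alpha}/[d]$, which is nonzero because $[d] \ne 0$ under the hypothesis $[n]! \ne 0$.

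Once triangularity with nonzero diagonal is established, the transition matrix $P$, restricted to $1$-factors of a fixed shape $\lambda$, is invertible. By Theorem~\ref{t:main}(a) the set $\{\omax(\beta) : \beta \text{ of shape } \lambda\}$ is a basis of $C(\lambda)$, so the set $\{\nat(\alpha) : \alpha \text{ of shape } \lambda\}$ is also a basis of $C(\lambda)$. The main obstacle in this argument is nothing deep, merely the careful bookkeeping of the partial-sum inequalities in the $\alpha^-$ case; everything else is a straightforward combination of the inductive hypothesis with the recursions of Corollary~\ref{c:recursion} and the construction of $\omax$.
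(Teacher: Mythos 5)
Your proposal is correct and follows essentially the same route as the paper: induction on length, handling $\alpha^+$ via $\Phi_1$ and $\alpha^-$ via Corollary~\ref{c:recursion}(b), with the diagonal coefficient picked up as $\pi_{\alpha,\alpha}/[d]$. The partial-sum bookkeeping you carry out explicitly (that $\beta^-\le\alpha^-$ and $\gamma^+\le\alpha^{(j)+}\le\alpha^-$) is exactly what the paper records in the displayed chain $\alpha^- > \alpha^{(d-1)+} > \cdots > \alpha^{(1)+}$ immediately before the corollary, so nothing is missing.
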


\begin{proof}
We proceed by induction on the length of the $1$-factor. Assuming the
result
\[
\nat(\alpha) = \sum_{\beta \le \alpha} \pi_{\alpha,\beta} \omax(\beta)
\]
holds for $\alpha$, it suffices to show the similar result holds also
for $\alpha^+$ and, if $i_\alpha>0$, also for $\alpha^-$.

By Corollary
\ref{c:recursion}(a), we may apply $\Phi_1$ to both sides to obtain
\begin{align*}
\nat(\alpha^+) = \sum_{\beta \le \alpha} \pi_{\alpha,\beta}
\omax(\beta^+)
\end{align*}
where $\pi_{\alpha^+,\alpha^=} = \pi_{\alpha,\alpha} \ne 0$.

If $d=i_\alpha > 0$ then by both parts of Corollary \ref{c:recursion} and the
inductive hypothesis,
\[
\nat(\alpha^-) = \frac{1}{[d]}\sum_{\beta \le \alpha}
\pi_{\alpha,\beta} \omax(\beta^-) - \sum_{j=1}^{d-1}
\frac{[j]}{[d]} \sum_{\beta \le \alpha^{(j)}} \pi_{\alpha^{(j)},
  \beta} \omax(\beta^+)
\]
where $\pi_{\alpha^-,\alpha^-} = \frac{1}{[d]} \pi_{\alpha,\alpha} \ne 0$.
The proof is complete.
\end{proof}

We can also use the formulas in Theorem \ref{t:Phi-on-N} to recursively
compute all the basis transition coefficients in
\[
\omax(\alpha) = \sum_{\beta \le \alpha}
\pi'_{\alpha,\beta} \nat(\beta)
\]
for any given $\alpha$. This is best illustrated by an example.

\begin{example}
Let $\alpha=(1^3,-1^3)$, with $(1,2,3,2,1,0)$ the corresponding weight
sequence. (Exponents indicate repeated entries, as usual.) Then
$\omax(\alpha) = \Phi_2^3 \Phi_1^3(1)$. As $\Phi_1^3(1) = \nat(1^3)$,
we have
\begin{align*}
\Phi_2 \Phi_1^3(1) &= \Phi_2 \nat(1^3) = [3]\nat(1^3,-1) +
    [2]\nat(1^2,-1,1) + \nat(1,-1,1^2) \\
\intertext{and applying $\Phi_2$ to both sides gives}
\Phi_2^2 \Phi_1^3(1) &= [3]\big([2]\nat(1^3,-1^2) + \nat(1,-1,1,-1,1)\big) \\
& + [2]\big([2]\nat(1^2,-1,1,-1) + \nat(1^2,-1^2,1) \big)  \\
& + \big([2]\nat(1,-1,1^2,-1) + \nat(1,-1,1,-1,1) \big)
\intertext{which simplifies (after combining the second and last terms) to}
\Phi_2^2 \Phi_1^3(1) &=
    [3][2]\nat(1^3,-1^2) 
 + [2][2]\nat(1^2,-1,1,-1) + [2]\nat(1^2,-1^2,1) \\
 & + [2]\nat(1,-1,1^2,-1) + ([3]+1)\nat(1,-1,1,-1,1).
 \intertext{Finally, apply $\Phi_2$ once more
   (which by Remark \ref{r:special-cases}(ii) 
   simply appends a $-1$ to each index
   on the right in this case) to get}
\omax(\alpha) & = \Phi_2^3 \Phi_1^3(1) =
    [3][2]\nat(1^3,-1^3) 
 + [2][2]\nat(1^2,-1,1,-1^2) \\ & + [2]\nat(1^2,-1^2,1,-1) 
 + [2]\nat(1,-1,1^2,-1^2) \\ &+ ([3]+1)\nat(1,-1,1,-1,1,-1).
\end{align*}
In the above calculation, we have resisted the urge to replace $[3]+1$
by $[2]^2$, as such simplifications will not always be available.
\end{example}

We formulate the procedure of the above example in the form of yet
another corollary.

\begin{cor}\label{c:N-basis}
Given any $1$-factor $\alpha$ of length at most $n$, let $m=m(\alpha)$
be the maximum element of its corresponding weight sequence. We have
\[
\omax(\alpha) = \sum_{\beta \le \alpha} \pi'_{\alpha,\beta} \nat(\beta)
\]
where the sum is over $1$-factors of the same shape as $\alpha$, and
$\pi'_{\alpha,\alpha} \ne 0$. Furthermore, there are polynomials
$\pi''_{\alpha,\beta}(t_1,\dots, t_m)$ in $m$ commuting indeterminates
$t_1, \dots, t_m$ with nonnegative integer coefficients such that
$\pi'_{\alpha,\beta} = \pi''_{\alpha,\beta}([1],[2], \dots, [m])$. 
\end{cor}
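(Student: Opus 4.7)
The plan is to prove both assertions simultaneously by induction on the length of $\alpha$, using the recursive identities $\omax(\gamma^+) = \Phi_1 \omax(\gamma)$ and $\omax(\gamma^-) = \Phi_2 \omax(\gamma)$ (immediate from the definition of $\omax$ as $\Upsilon_n \cdots \Upsilon_1(1)$) together with Theorem~\ref{t:Phi-on-N} for the action of $\Phi_1, \Phi_2$ on $\nat$-vectors. The base case is trivial: $\omax(\emptyset) = 1 = \nat(\emptyset)$, with $\pi''_{\emptyset,\emptyset} = 1$.

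For the inductive step, assume the expansion $\omax(\gamma) = \sum_{\beta' \le \gamma} \pi''_{\gamma,\beta'}([1],\ldots,[m(\gamma)])\, \nat(\beta')$ holds with $\pi''_{\gamma,\beta'} \in \N[t_1,\ldots,t_{m(\gamma)}]$ and $\pi'_{\gamma,\gamma} \ne 0$. If $\alpha = \gamma^+$, applying $\Phi_1$ and Theorem~\ref{t:Phi-on-N}(a) yields $\omax(\gamma^+) = \sum_{\beta' \le \gamma} \pi''_{\gamma,\beta'}([1],\ldots,[m(\gamma)])\, \nat((\beta')^+)$; since $\beta \mapsto \beta^+$ is strictly order-preserving and $m(\gamma^+) \ge m(\gamma)$, I set $\pi''_{\gamma^+,(\beta')^+} := \pi''_{\gamma,\beta'}$ (viewed in the larger polynomial ring) and all required properties transfer unchanged. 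If $\alpha = \gamma^-$ (necessarily $d := i_\gamma \ge 1$), applying $\Phi_2$ and Theorem~\ref{t:Phi-on-N}(b) gives
\[
\omax(\gamma^-) = \sum_{\beta' \le \gamma} \pi''_{\gamma,\beta'}([1],\ldots,[m(\gamma)])\, \Bigl( [d]\, \nat((\beta')^-) + \sum_{j=1}^{d-1} [j]\, \nat((\beta')^{(j)+}) \Bigr),
\]
using that $i_{\beta'} = d$ for every $\beta'$ of the same shape as $\gamma$. Collecting like terms produces the expansion of $\omax(\gamma^-)$ in the $\nat(\beta)$ for $\beta \le \gamma^-$. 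The chain $\gamma^- > \gamma^{(d-1)+} > \cdots > \gamma^{(1)+}$ observed before Corollary~\ref{c:triangular}, combined with order-preservation of $\beta' \mapsto (\beta')^-$ and $\beta' \mapsto (\beta')^{(j)+}$ on factors below $\gamma$, yields triangularity and identifies the diagonal coefficient as $\pi'_{\gamma^-,\gamma^-} = \pi'_{\gamma,\gamma} \cdot [d]$, which is nonzero since $d \le n$ and $[n]! \ne 0$. Each newly introduced factor $[j]$ satisfies $1 \le j \le d \le m(\gamma) = m(\gamma^-)$, so the resulting polynomial coefficients lie in $\N[t_1,\ldots,t_{m(\gamma^-)}]$.

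I expect the main obstacle to be purely bookkeeping: one must verify the elementary identities $m(\gamma^+) \ge m(\gamma)$ and $m(\gamma^-) = m(\gamma)$ (the latter because the weight sequence of $\gamma^-$ appends $i_\gamma - 1 < m(\gamma)$), and ensure that when several distinct pairs $(\beta', j)$ contribute to the same $\nat(\beta)$ through coincidences of the form $(\beta_1')^- = (\beta_2')^{(j)+}$, the resulting sum of polynomial contributions still lies in $\N[t_1,\ldots,t_{m(\alpha)}]$. This last point is automatic, since $\N[t_1,\ldots,t_{m(\alpha)}]$ is closed under addition and under multiplication by the variables $t_j = [j]$ with $j \le m(\alpha)$.
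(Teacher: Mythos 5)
Your proposal is correct and follows essentially the same route as the paper: induction on the length of the $1$-factor, using $\Phi_1\omax(\gamma)=\omax(\gamma^+)$, $\Phi_2\omax(\gamma)=\omax(\gamma^-)$ and Theorem~\ref{t:Phi-on-N} to expand $\Phi_1\nat(\beta)$ and $\Phi_2\nat(\beta)$, with the new coefficients obtained by multiplying by quantum integers $[j]$ with $j\le m(\alpha)$ and summing in $\N[t_1,\dots,t_{m(\alpha)}]$. Your extra bookkeeping (triangularity, the diagonal coefficient $\pi'_{\gamma^-,\gamma^-}=[d]\,\pi'_{\gamma,\gamma}$, and the bounds on $m$) is exactly what the paper leaves implicit in ``The result follows.''
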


\begin{proof}
In the base case, $\omax(1) = \nat(1)$. Assume by induction that
\[
\omax(\alpha) = \sum_{\beta \le \alpha}
\pi'_{\alpha,\beta} \nat(\beta)
\]
where $\pi'_{\alpha,\beta} = \pi''_{\alpha,\beta}([1],\dots,[m])$ and
$\pi''_{\alpha,\beta}(t_1, \dots, t_m)$ is in $\N[t_1, \dots, t_m]$.
We note that (by definition) $\Phi_1 \omax(\alpha) =
\omax(\alpha^+)$ and, if $i_\alpha>0$, $\Phi_2 \omax(\alpha) =
\omax(\alpha^-)$. Now it follows from Theorem~\ref{t:Phi-on-N} that
\[
\omax(\alpha^+) = \sum_{\beta \le \alpha}
\pi'_{\alpha,\beta} \nat(\beta^+)
\]
and, if $d(\alpha) :=i_\alpha >0$, that
\begin{align*}
\omax(\alpha^-) = \sum_{\beta \le \alpha}
\pi'_{\alpha,\beta} \Phi_2 \nat(\beta) =
\sum_{\beta \le \alpha} \pi'_{\alpha,\beta}
\sum_{j=1}^{d(\beta)} [j]\nat(\beta^{+(j)}).
\end{align*}
The result follows.
\end{proof}

\begin{rmk}
Under the same hypotheses as in the corollary, the matrices $P =
(\pi_{\alpha,\beta})$, $P' = (\pi'_{\alpha,\beta})$, indexed by pairs
$(\alpha,\beta)$ of $1$-factors of the same shape, and with rows and
columns ordered by $\le$ as defined above, satisfy the relation $PP' =
I = P'P$; that is, $P' = P^{-1}$.
\end{rmk}

Let $\gamma$, $\beta$ be $1$-factors, \emph{not necessarily} of the
same shape. We write $\gamma \subseteq \beta$ if, for every pairing in
$\gamma$, there is a pairing in $\beta$ in the same positions. For
example,
\[
(1,1,-1) \subseteq (1,1,-1,-1,1) \quad\text{and}\quad (1,1,1,-1)
\subseteq (1,-1,1,-1).
\]
We say that $\gamma$ is \emph{subordinate to} $\beta$ if and only if
$\gamma \subseteq \beta$. Notice that if $\gamma \subseteq \beta$ then
$\gamma||(1^k) \subseteq \beta||(1^l)$ for any $k,l \ge 0$.

For any $1$-factors $\alpha$, $\beta$ of the same weight and same
length $l$ (equivalently, of the same shape) an $\alpha \diamond
\beta$-\emph{sequence} is a sequence $(\gamma(0),\gamma(1),\dots,
\gamma(l))$ of $1$-factors satisfying the properties:
\begin{enumerate}\renewcommand{\labelenumi}{(\roman{enumi})}
\item $\gamma(k)$ has length $k$ for all $k$. (In particular,
  $\gamma(0) = \emptyset$.)
\item $\gamma(k) = \gamma(k-1)^+$ if $\alpha_k=1$.
\item $\gamma(k) = \gamma(k-1)^{+(j)}$ for some $j$ if $\alpha_k = -1$.
\item $\gamma(k) \subseteq \beta$ for all $k$. \label{iv}
\end{enumerate}
Thanks to the fourth condition above, any $\alpha \diamond
\beta$-sequence necessarily terminates at $\gamma(l) = \beta$, because
its terminal element has the same number of pairings as
$\beta$. Furthermore,
\begin{equation}\label{e:iff-assertion}
  \text{an $\alpha \diamond \beta$-sequence exists} \iff \beta \le
  \alpha.
\end{equation}
To see the forward implication above, observe that the second and
third conditions above ensure that $\gamma(k) \le (\alpha_1, \dots,
\alpha_k)$ for each $k$, so it follows that $\beta \le \alpha$
whenever an $\alpha \diamond \beta$-sequence exists. Conversely, if
$\beta \le \alpha$ then one can always construct an $\alpha \diamond
\beta$-sequence.

Whenever $\alpha_k = -1$, a pairing is added in $\gamma(k) =
\gamma(k-1)^{+(j)}$, by pairing the $j$th defect in $\gamma(k-1)^{+}$
with its next defect. The \emph{pairing variable} associated to this
pairing is the indeterminate $t_j$. The \emph{pairing monomial}
$\PM(\gamma,t)$ associated to an $\alpha \diamond \beta$-sequence
$\gamma = (\gamma(0),\gamma(1),\dots, \gamma(l))$ is the product of
all its pairing variables.

\begin{cor}\label{c:pi-poly}
  Let $\beta \le \alpha$ be $1$-factors of the same weight and length,
  where the length is at most $n$. Then
  \[
  \pi''_{\alpha,\beta}(t) = \textstyle \sum_\gamma \PM(\gamma,t)
  \]
  where the index $\gamma$ ranges over the set of $\alpha \diamond
  \beta$-sequences.
\end{cor}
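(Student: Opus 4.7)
The plan is to prove the identity by induction on the length $l$ of $\alpha$ (equivalently of $\beta$), running the recursion that produced the polynomials $\pi''_{\alpha,\beta}$ in the proof of Corollary~\ref{c:N-basis} in parallel with the combinatorial recursion defining $\alpha \diamond \beta$-sequences. Since $\omax(\alpha^+) = \Phi_1\omax(\alpha)$ and $\omax(\alpha^-) = \Phi_2\omax(\alpha)$, Theorem~\ref{t:Phi-on-N} lets us compute how the coefficients $\pi''_{\alpha,\beta}(t)$ transform when $\alpha$ is extended by a $+$ or $-$ entry, and we match this transformation with the effect of appending one more step to an $\alpha \diamond \beta$-sequence.

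\textbf{Base case.} For $\alpha = \beta = \emptyset$ (or $\alpha = \beta = (1)$), $\omax(\alpha) = \nat(\alpha)$, so $\pi''_{\alpha,\alpha} = 1$; on the other side the unique (trivial) $\alpha \diamond \alpha$-sequence has $\PM = 1$.

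\textbf{Inductive step.} Assume the identity for all $1$-factors of length $l$, and take $\alpha$ of length $l$. Every $1$-factor of length $l+1$ is either $\alpha^+$ or (when $i_\alpha>0$) $\alpha^-$. Starting from $\omax(\alpha) = \sum_{\beta \le \alpha} \pi'_{\alpha,\beta}\nat(\beta)$:
\begin{itemize}
\item For $\alpha^+$: Theorem~\ref{t:Phi-on-N}(a) gives $\omax(\alpha^+) = \sum_\beta \pi'_{\alpha,\beta}\nat(\beta^+)$, so $\pi''_{\alpha^+,\beta^+} = \pi''_{\alpha,\beta}$. An $\alpha^+ \diamond \beta^+$-sequence $(\gamma(0),\dots,\gamma(l+1))$ has $\gamma(l+1) = \gamma(l)^+$ forced by $\alpha^+_{l+1}=1$, hence $\gamma(l)=\beta$; truncation gives an $\alpha \diamond \beta$-sequence and this correspondence is a bijection. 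The last step is not a pairing step, so pairing monomials agree.
\item For $\alpha^-$: Theorem~\ref{t:Phi-on-N}(b) yields $\omax(\alpha^-) = \sum_\beta \pi'_{\alpha,\beta}\sum_{j=1}^{d(\beta)}[j]\nat(\beta^{+(j)})$, so for each $\beta'$ in the support,
\[
\pi''_{\alpha^-,\beta'}(t) = \sum_{(\beta,j):\,\beta^{+(j)}=\beta'} t_j\,\pi''_{\alpha,\beta}(t).
\]
By the inductive hypothesis this equals $\sum_{(\beta,j,\sigma)} t_j\,\PM(\sigma,t)$, where $\sigma$ runs over $\alpha \diamond \beta$-sequences. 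The required bijection sends the triple $(\beta,j,\sigma)$ to the $\alpha^- \diamond \beta'$-sequence extending $\sigma$ by $\gamma(l+1) = \beta^{+(j)} = \beta'$. This is well-defined because $\alpha^-_{l+1} = -1$ forces a $^{+(j)}$-step, and the newly added pairing contributes exactly the variable $t_j$. Conversely, any $\alpha^- \diamond \beta'$-sequence determines $(\gamma(l), j)$ uniquely (as $\gamma(l)$ is the penultimate term and $j$ is read off from the pairing introduced at the last step), so the map is a bijection. Combining with the inductive hypothesis completes the step.
\end{itemize}

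\textbf{Main obstacle.} The only subtle point is the bijective bookkeeping in the $^-$-case: a single $\beta'$ can arise from several pairs $(\beta,j)$ (one for each pairing in $\beta'$ whose partner lies in $\{1,\dots,l\}$, plus the case $j=d(\beta)$ where position $l+1$ itself is paired). One must check that distinct choices of the pairing in $\beta'$ to be declared ``the last one added'' produce genuinely distinct $\gamma(l)$, and that each such $\gamma(l)$ together with the correct index $j$ satisfies the subordination conditions (i)--(iv) of an $\alpha^- \diamond \beta'$-sequence. This is straightforward once one observes that ``removing the last pairing'' in $\beta'$ (i.e.\ converting its right-endpoint entry back to $+1$) lowers its weight sequence only at positions $\ge$ the removed entry, so the resulting $\gamma(l)$ still satisfies $\gamma(l) \le (\alpha_1,\dots,\alpha_l)$.
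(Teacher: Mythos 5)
Your proof is correct and follows the same route as the paper, which simply observes that the identity falls out of the recursion in Corollary~\ref{c:N-basis} by tracking the coefficient of each $\nat(\beta)$; you have merely unfolded that induction explicitly, matching each $\Phi_1$-step (no new factor) and each $\Phi_2$-step (a factor $[j]\leftrightarrow t_j$) with the corresponding step of an $\alpha\diamond\beta$-sequence. The bijective bookkeeping you flag as the main obstacle is handled exactly as you describe, so no gap remains.
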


\begin{proof}
This follows immediately from Corollary~\ref{c:N-basis}, by paying
attention only to the terms involving $\nat(\beta)$ in that expansion. 
\end{proof}

\begin{rmk}
Some basic properties of the $\pi''_{\alpha,\beta}$ are as follows:
\begin{enumerate}\renewcommand{\labelenumi}{(\roman{enumi})}
  \item If $\alpha=(1^l)$ then $\pi''_{\alpha,\alpha} = 1$ for any $l
    \le n$.
  \item $\pi''_{\alpha,\beta} \ne 0$ if and only if $\beta \le \alpha$
    (for $\alpha$, $\beta$ of the same weight and length).
  \item If $\beta$ is a $1$-factor whose pairings are all nested one
    within the next (e.g., $\beta = (1^3,-1^3,1)$) then
    $\pi''_{\alpha,\beta}$ is a monomial.
\end{enumerate}
However, it is not always a monomial.  For one example, if
$\alpha=(1^5,-1^3)$, $\beta=(1,-1,1,-1,1,-1,1^2)$ then
$\pi''_{\alpha,\beta} = t_1^3 + 2t_1^2t_3 + t_1t_3^2 + t_1^2t_5 +
t_1t_3t_5$. The authors have software to compute these polynomials
(available by request).
\end{rmk}

\section{Explicit formula for transition coefficients}\label{s:pi}%
\noindent
Let $\Bbbk$ be a field. \emph{We continue to assume that $0 \ne v \in
\Bbbk$ is such that $[n]! \ne 0$, so that $\Schur(n)$ is semisimple}.
Recall that the basis elements $\omax(\beta)$ (for $\beta$ a
$1$-factor) are pairwise orthogonal with respect to the bilinear
form. Thus we have
\begin{equation}
  \pi_{\alpha,\beta} =
  \frac{\bil{\nat(\alpha)}{\omax(\beta)}}
       {\bil{\omax(\beta)}{\omax(\beta)}}
\end{equation}
where $\pi_{\alpha,\beta}$ is the transition coefficient in the equation
\begin{equation*}
  \nat(\alpha) = \sum_{\beta \le \alpha} \pi_{\alpha,\beta}
  \omax(\beta)
\end{equation*}
from Corollary \ref{c:triangular}.  Our goal in this section is to
obtain a non-recursive description of the $\pi_{\alpha,\beta}$.

\begin{lem}\label{l:recursion}
Let $\alpha$, $\beta$ be $1$-factors of length less than $n$. Then:
\begin{enumerate}
\item $\bil{\nat(\alpha^+)}{\omax(\beta^+)} =
  \bil{\nat(\alpha)}{\omax(\beta)}$.
\item $\bil{\nat(\alpha^+)}{\omax(\beta^-)} = 0$.
\item If $i_\alpha>0$, $i_\beta>0$ then
  $\bil{\nat(\alpha^-)}{\omax(\beta^-)} = v [i_\beta+1]
  \bil{\nat(\alpha)}{\omax(\beta)}$.
\item If $d=i_\alpha > 1$ then
  $\bil{\nat(\alpha^-)}{\omax(\beta^+)} = - \sum_{j=1}^{d-1}
  \frac{[j]}{[d]} \bil{\nat(\alpha^{(j)})}{\omax(\beta)}$.
\end{enumerate}
\end{lem}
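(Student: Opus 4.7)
The plan is to reduce each of the four identities to an application of Lemma~\ref{l:orthogonality}, by expressing $\nat(\alpha^\pm)$ and $\omax(\beta^\pm)$ in terms of $\Phi_1, \Phi_2$ acting on $\nat$'s and $\omax$'s of length one less. The key inputs are: (i)~$\omax(\beta^+) = \Phi_1\omax(\beta)$ and $\omax(\beta^-)=\Phi_2\omax(\beta)$ by construction; (ii)~$\nat(\alpha^+)=\Phi_1\nat(\alpha)$ from Theorem~\ref{t:Phi-on-N}(a); (iii)~the recursion from Corollary~\ref{c:recursion}(b),
\[
\nat(\alpha^-) = \frac{1}{[d]}\,\Phi_2\nat(\alpha) - \sum_{j=1}^{d-1}\frac{[j]}{[d]}\,\Phi_1\nat(\alpha^{(j)}),
\]
where $d=i_\alpha$; and (iv)~the fact that each $\nat(\alpha^{(j)})$ is maximal (every $\nat$-vector is).

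First I would dispatch (a) and (b) as one-line consequences. For (a), both sides equal $\bil{\Phi_1\nat(\alpha)}{\Phi_1\omax(\beta)}$, so Lemma~\ref{l:orthogonality}(a) gives the result. For (b), the left side equals $\bil{\Phi_1\nat(\alpha)}{\Phi_2\omax(\beta)}$, and since $\nat(\alpha)$ is maximal, Lemma~\ref{l:orthogonality}(b) gives zero.

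For (c) and (d) I would substitute the recursion (iii) into the left side and distribute the bilinear form. In both cases the terms $\bil{\Phi_1\nat(\alpha^{(j)})}{\Phi_?\omax(\beta)}$ that appear can be handled with Lemma~\ref{l:orthogonality}: when paired with $\Phi_2\omax(\beta)$ they vanish by part~(b) (using that $\nat(\alpha^{(j)})$ is maximal), and when paired with $\Phi_1\omax(\beta)$ they collapse to $\bil{\nat(\alpha^{(j)})}{\omax(\beta)}$ by part~(a). What remains is the single term $\frac{1}{[d]}\bil{\Phi_2\nat(\alpha)}{\Phi_?\omax(\beta)}$. In (d) this is zero by part~(b) (applied after using symmetry of the form, with $\omax(\beta)$ being the maximal vector in the first slot). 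In (c) it is evaluated by part~(c) of Lemma~\ref{l:orthogonality}, giving $\frac{1}{[d]}\,v[i_\alpha][i_\alpha+1]\bil{\nat(\alpha)}{\omax(\beta)} = v[i_\alpha+1]\bil{\nat(\alpha)}{\omax(\beta)}$.

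The only subtlety, which I would flag explicitly, is that Lemma~\ref{l:orthogonality}(c) applies only when $i_\alpha=i_\beta>0$; outside that case both sides of (c) vanish automatically because $\nat(\alpha)$, $\omax(\beta)$ are weight vectors of distinct weights (so their form is zero) and $\nat(\alpha^-)$, $\omax(\beta^-)$ likewise have distinct weights. Using $i_\alpha=i_\beta$ lets one replace $[i_\alpha+1]$ with $[i_\beta+1]$, matching the stated formula. No other genuine obstacle is expected; the whole argument is bookkeeping of which case of Lemma~\ref{l:orthogonality} to invoke and noticing that the $\Phi_1$-terms from the recursion for $\nat(\alpha^-)$ either survive cleanly (in (d)) or are killed by maximality (in (c)).
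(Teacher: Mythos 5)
Your proposal is correct and follows essentially the same route as the paper: both reduce each identity to Lemma~\ref{l:orthogonality} via $\nat(\alpha^+)=\Phi_1\nat(\alpha)$, $\omax(\beta^\pm)=\Phi_{1,2}\,\omax(\beta)$, and the expansion of $\nat(\alpha^-)$ from Corollary~\ref{c:recursion}(b), with the cross terms killed by maximality exactly as you describe. Your explicit flagging of the weight condition needed for Lemma~\ref{l:orthogonality}(c) and of the symmetry step in (d) is a slight tightening of the paper's argument, not a deviation from it.
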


\begin{proof}
The formulas follow easily from Lemma~\ref{l:orthogonality} and
Corollary~\ref{c:recursion}. More precisely, we get (a) from
Corollary~\ref{c:recursion}(a) and Lemma~\ref{l:orthogonality}(a). Part
(b) is immediate from Lemma~\ref{l:orthogonality}(b), by setting
$b=\nat(\alpha^+)$ and $b' = \omax(\beta^-)$ there. We now prove
part (c). By Corollary~\ref{c:recursion}(b) and the definition of
$\omax(\beta^-)$ we have
\begin{align*}
  \bil{\nat(\alpha^-)}{\omax(\beta^-)} &= \textstyle
  \bil{\frac{1}{[d]} \Phi_2 \nat(\alpha) - \sum_{j=1}^{d-1}
    \frac{[j]}{[d]} \Phi_1 \nat(\alpha^{(j)})}{\Phi_2
    \omax(\beta)} \\
  &= \textstyle\frac{1}{[d]}\bil{\Phi_2\nat(\alpha)}
      {\Phi_2\omax(\beta)} \quad\text{(by part (b))} \\
      &= \textstyle\frac{1}{[d]}v[d][i_\beta+1] \bil{\nat(\alpha)}
      {\omax(\beta)} \quad\text{(by Lemma~\ref{l:orthogonality}(c))}
\end{align*}
and part (c) is proved once we cancel the factors of $[d]$. Finally,
we prove part (d). Again by the Corollary~\ref{c:recursion}(b) we have
\begin{align*}
  \bil{\nat(\alpha^-)}{\omax(\beta^+)} &= \textstyle
  \bil{\frac{1}{[d]} \Phi_2 \nat(\alpha) - \sum_{j=1}^{d-1}
    \frac{[j]}{[d]} \Phi_1 \nat(\alpha^{(j)})}{\Phi_1
    \omax(\beta)} \\ &= \textstyle - \sum_{j=1}^{d-1}
  \frac{[j]}{[d]} \bil{\Phi_1 \nat(\alpha^{(j)})}{\Phi_1
    \omax(\beta)} \quad\text{(by Lemma \ref{l:orthogonality}(b))} \\
  &= \textstyle - \sum_{j=1}^{d-1}
  \frac{[j]}{[d]} \bil{\nat(\alpha^{(j)})}{\omax(\beta)}
\end{align*}
where we have again applied Lemma~\ref{l:orthogonality}(a). The proof
is complete.
\end{proof}

\begin{rmk}
In order for $\alpha^-$ and $\beta^+$ to be $1$-factors
satisfying $i_{\alpha^-} = i_{\beta^+}$, it is necessary and
sufficient that $i_\alpha > 1$ and $i_\beta = i_\alpha - 2$. This
explains the hypothesis on part (d) of the last result.
\end{rmk}

Let $\alpha$ be a $1$-factor. We say that a $1$-factor $\beta$ is
$\alpha$-\emph{compatible} if for every pairing in $\alpha$, the
corresponding entries of $\beta$ have opposite signs.

\begin{example}
Let $\alpha = (1^m,-1)$. The only $1$-factors $\beta$ which are
$\alpha$-compatible are $\beta=\alpha$ and $\beta=(1^{m-1},-1,1)$.
The $1$-factor $(1^2,-1^2,1)$ is $(1^3,-1^2)$-incompatible and
$(1,-1,1,-1,1^2)$ is $(1^4,-1^2)$-incompatible. The last example
illustrates the situation addressed by the next result.
\end{example}

\begin{lem}\label{l:defect}
Let $\alpha$, $\beta$ be $1$-factors of the same weight and length
at most $n$. If $\alpha$ has a defect in some position and the
corresponding entry of $\beta$ is $-1$, then $\beta$ must be
$\alpha$-incompatible.
\end{lem}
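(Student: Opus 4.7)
The plan is to prove the contrapositive by a pure counting argument: under the hypothesis of $\alpha$-compatibility, every defect of $\alpha$ is forced to carry a $+1$ in $\beta$, and then the assumption $\beta_i=-1$ at a defect position $i$ directly contradicts this.

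First I would set up the arithmetic. Let $n$ be the common length and $d$ the common weight of $\alpha$ and $\beta$, so each of them has exactly $(n+d)/2$ entries equal to $+1$ and $(n-d)/2$ entries equal to $-1$. Next I would use the definition of a $1$-factor to note that in $\alpha$ every entry equal to $-1$ must be paired (the pairing definition only fails to supply a partner for entries equal to $+1$), so the paired positions of $\alpha$ consume all $(n-d)/2$ entries $-1$, giving $(n-d)/2$ pairs covering $n-d$ positions in total; the remaining $d$ positions are precisely the defects of $\alpha$, all carrying $\alpha_j=+1$.

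Now assume for contradiction that $\beta$ is $\alpha$-compatible. By definition, each of the $(n-d)/2$ pairs in $\alpha$ contributes one $+1$ and one $-1$ to $\beta$ on those two positions. Thus, among the paired positions of $\alpha$, the entries of $\beta$ contain exactly $(n-d)/2$ values $+1$ and exactly $(n-d)/2$ values $-1$. Subtracting from the totals for $\beta$, the defect positions of $\alpha$ must carry $(n+d)/2-(n-d)/2=d$ values equal to $+1$ and $(n-d)/2-(n-d)/2=0$ values equal to $-1$ in $\beta$. Since there are exactly $d$ defects of $\alpha$, every single defect position $j$ must satisfy $\beta_j=+1$, contradicting the hypothesis $\beta_i=-1$ at the defect $i$.

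This is essentially the whole argument and I expect no real obstacle: the result reduces to the elementary observation that equal weight plus equal length forces $\alpha$ and $\beta$ to have the same multiset of signs, so any $-1$ in $\beta$ at a defect of $\alpha$ creates a surplus of $-1$'s that cannot be absorbed by the pairs of $\alpha$. The only point that requires care is the initial claim that $-1$'s in a $1$-factor are always paired, which follows directly from the definition preceding the lemma.
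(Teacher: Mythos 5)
Your argument is correct and is essentially the paper's own proof: both are counting arguments resting on the observation that $\alpha$-compatibility forces each of the $(n-d)/2$ pairs of $\alpha$ to absorb exactly one of the $(n-d)/2$ entries $-1$ of $\beta$, leaving none available for a defect position (the paper phrases this by splitting $\alpha$ at the defect and comparing $-1$ counts, but the content is identical). The only point stated a bit too quickly is that every $-1$ in a $1$-factor is paired --- this needs the standard parenthesis-matching argument rather than following ``directly'' from the definition, though the paper relies on the same fact without comment.
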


\begin{proof}
This follows by a simple counting argument. Assume for a contradiction
that $\beta$ is $\alpha$-compatible. We may write
\[
\alpha = \alpha' || (1) || \alpha'' \quad\text{and similarly} \quad
\beta = \beta' || (-1) || \beta''
\]
where the $1$ between $\alpha'$ and $\alpha''$ is a defect and the
lengths of $\alpha'$, $\beta'$ coincide (which forces the same to be
true of $\alpha''$, $\beta''$). This implies that $\alpha'$,
$\alpha''$ are $1$-factors and all pairings in $\alpha$ occur either
in $\alpha'$ or $\alpha''$. (We do not claim that $\beta'$ or
$\beta''$ are $1$-factors.) By the compatibility assumption, there are
at least as many $-1$ entries in $\beta' || \beta''$ as there are in
$\alpha' || \alpha''$. But this implies that $\beta$ has at least one
more $-1$ entry than $\alpha$. This means that $\beta$ must have a
different weight than $\alpha$, so we have reached the desired
contradiction.
\end{proof}

Let $\beta$ be a $1$-factor of length at most $n$ with $m$ pairings,
and let $i_1, \dots, i_m$ be the positions of its $-1$ entries. Set
$s_j = s_j(\beta)$ equal to $\beta_1 + \cdots + \beta_{i_j - 1}$, for
each $j = 1, \dots, m$. It follows from Lemma \ref{l:orthogonality}
that
\begin{equation}\label{e:length-squared}
  \bil{\nat(\beta)}{\nat(\beta)} = v^m \prod_{j=1}^m [s_j][s_j+1].
\end{equation}
If $\alpha$ is another $1$-factor with $m$ pairings, we define $s'_j =
s'_j(\alpha,\beta)$ by
\[
s'_j = s'_j(\alpha,\beta) = 
\begin{cases}
  -s_j & \text{ if } \alpha_{i_j} = 1 \\
  s_j+1 & \text{ if } \alpha_{i_j} = -1.
\end{cases}
\]
Then we have the following result, which is the main result of this
section.

\begin{thm}\label{t:closed-formula}
Let $\alpha$, $\beta$ be $1$-factors of the same length, which is at
most $n$, each with $m$ pairings. Then
\[
\bil{\nat(\alpha)}{\omax(\beta)} =
\begin{cases}
  v^m \prod_{j=1}^m [s'_j]  & \text{ if $\beta$ is $\alpha$-compatible,}\\
  \qquad 0 & \text{ otherwise.}
\end{cases}
\]
\end{thm}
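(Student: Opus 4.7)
The plan is to induct on the common length $\ell$ of $\alpha$ and $\beta$, splitting into four cases according to $(\alpha_\ell, \beta_\ell)$ and applying the corresponding identity of Lemma~\ref{l:recursion}. The base case $\ell = 0$ is immediate. Write $\alpha = \alpha' || (\alpha_\ell)$ and $\beta = \beta' || (\beta_\ell)$.

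For $(1,1)$, the positions $i_j, s_j, s'_j$ and the pairings of $\alpha$ are all unchanged in passing to $(\alpha',\beta')$, so Lemma~\ref{l:recursion}(a) with the inductive hypothesis settles the case. For $(1,-1)$, position $\ell$ is a defect of $\alpha$ with $\beta_\ell = -1$; Lemma~\ref{l:defect} forces $\beta$ to be $\alpha$-incompatible, matching the vanishing of Lemma~\ref{l:recursion}(b). For $(-1,-1)$, a new $-1$-position $i_m = \ell$ appears with $s_m = i_{\beta'}$; since $\alpha_\ell = -1$ the new factor is $[s'_m] = [i_{\beta'}+1]$, exactly the factor produced by Lemma~\ref{l:recursion}(c). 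The equivalence of compatibilities needed for the induction follows from a count of $+1$'s: since $\alpha'$ and $\beta'$ have the same weight, an $\alpha'$-compatible $\beta'$ must have $\beta'_q = +1$ at every defect $q$ of $\alpha'$, so the new pair imposes no extra constraint.

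The interesting case is $(-1, 1)$. Set $d = i_{\alpha'}$ and let $q_1 < \cdots < q_d$ be the defect positions of $\alpha'$, so the new pairing of $\alpha$ is $(q_d, \ell)$. Lemma~\ref{l:recursion}(d) gives
\[
\bil{\nat(\alpha)}{\omax(\beta)} = -\sum_{j=1}^{d-1} \frac{[j]}{[d]} \bil{\nat((\alpha')^{(j)})}{\omax(\beta')}.
\]
If $\beta'$ is not $\alpha'$-compatible, every term vanishes by induction and $\beta$ inherits incompatibility at the old pairings. Otherwise, $\alpha'$ has weight $d$ while $\beta'$ has weight $d-2$, and a count of $-1$'s among paired vs.\ defect positions of $\alpha'$ shows that exactly one defect $q_{j^*}$ of $\alpha'$ satisfies $\beta'_{q_{j^*}} = -1$, with $\beta'_{q_j} = +1$ for all other $j$. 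The $1$-factor property of $\beta'$ applied at the balanced prefix before $q_1$ (where the $\alpha'$-compatible partial sum is $0$) forces $\beta'_{q_1} = +1$, hence $j^* \ge 2$. Since compatibility of $\beta'$ with $(\alpha')^{(j)}$ is equivalent to $\beta'_{q_j} \ne \beta'_{q_{j+1}}$, only $j \in \{j^*-1, j^*\} \cap \{1,\dots,d-1\}$ can contribute.

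In the subcase $j^* = d$ ($\beta$ is $\alpha$-compatible), only $j = d-1$ contributes. Because pairings of $\alpha'$ never straddle a defect, the partial sum $s_k$ at the unique index $k$ with $i_k = q_d$ equals $d-1$ (the $d-1$ preceding defects each contribute $+1$, intervening pairings contribute $0$), so $-\frac{[d-1]}{[d]}[s_k+1] = -[d-1] = [-(d-1)] = [s'_k(\alpha,\beta)]$, and the other $s'$-factors already coincide between $((\alpha')^{(d-1)},\beta')$ and $(\alpha,\beta)$; this matches the predicted formula. In the subcase $2 \le j^* \le d-1$ ($\beta$ fails compatibility at $(q_d,\ell)$), both indices $j \in \{j^*-1, j^*\}$ contribute and must cancel. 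The sequences $(\alpha')^{(j^*-1)}$ and $(\alpha')^{(j^*)}$ differ only at $q_{j^*}$ (a $-1$-position $i_{k_1}$ of $\beta'$ with $s_{k_1} = j^*-1$ by the same prefix count) and at $q_{j^*+1}$ (not an $i_k$), so their $s'$-products agree except at index $k_1$, taking values $[j^*]$ and $-[j^*-1]$ respectively. Combined with the prefactors $-[j^*-1]/[d]$ and $-[j^*]/[d]$, the two terms sum to zero. The bookkeeping in this final case is the main obstacle; it all rests on the structural observation that pairings of a $1$-factor nest strictly around its defects.
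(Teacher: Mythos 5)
Your proposal is correct and follows essentially the same route as the paper: induction on length through the four identities of Lemma~\ref{l:recursion}, with all the work concentrated in the $(\alpha^-,\beta^+)$ case, where the sum from Lemma~\ref{l:recursion}(d) either collapses to the single term $j=d-1$ (compatible case, using $-[d-1]=[-(d-1)]$) or cancels in the pair $j\in\{j^*-1,j^*\}$ via $[j^*-1][j^*]-[j^*][j^*-1]=0$. Your organization of that case around the unique defect $q_{j^*}$ of $\alpha'$ with $\beta'_{q_{j^*}}=-1$ is a slightly cleaner repackaging of the paper's Subcases A and B, but the underlying computation is identical.
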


\begin{proof}
The proof is by induction on length. There are four cases to consider,
in parallel with the four parts of Lemma~\ref{l:recursion}.
The base cases are trivial.

\textbf{Case 1.}  For any $1$-factors $\alpha$, $\beta$ of the same
shape, $\beta^+$ is $\alpha^+$-compatible if and only if $\beta$ is
$\alpha$-compatible. In the compatible situation, the desired formula
for $\bil{\nat(\alpha^+)}{\omax(\beta^+)}$ follows from
Lemma~\ref{l:recursion}(a) and the inductive hypothesis. In the
incompatible situation, $\bil{\nat(\alpha^+)}{\omax(\beta^+)} =
0$ by the inductive hypothesis.

\textbf{Case 2.} For any $1$-factors $\alpha$, $\beta$ such that
$\alpha^+$, $\beta-$ have the same shape,
$\bil{\nat(\alpha^+)}{\omax(\beta^-)} = 0$ by
Lemma~\ref{l:recursion}(b), and $\beta^-$ is always
$\alpha^+$-incompatible, by Lemma \ref{l:defect}.

\textbf{Case 3.} If $i_\alpha>0$, $i_\beta>0$ then for any $1$-factors
$\alpha$, $\beta$ of the same shape, $\beta^-$ is
$\alpha^-$-compatible if and only if $\beta$ is $\alpha$-compatible.
In the compatible situation, the desired formula
for $\bil{\nat(\alpha^-)}{\omax(\beta^-)}$ follows from
Lemma~\ref{l:recursion}(c) and the inductive hypothesis. In the
incompatible situation, $\bil{\nat(\alpha^+)}{\omax(\beta^+)} = 0$
by the inductive hypothesis.

\textbf{Case 4.}  Assume that $\beta^+$ is $\alpha^-$-compatible,
where $\beta^+$, $\alpha^-$ have the same shape.  We wish to compute
$\bil{\nat(\alpha^-)}{\omax(\beta^+)}$.  Write $\alpha = \alpha'
|| (1) || \alpha''$, where the $1$ between $\alpha'$, $\alpha''$ is
the rightmost defect in $\alpha$. The weight of $\alpha''$ is
zero. Hence
\[
\alpha^- = \alpha' || (1) || \alpha'' || (-1) \quad\text{and} \quad
\beta^+ = \beta' || (-1) || \beta'' || (1)
\]
where the lengths of $\alpha'$, $\beta'$ agree (and similarly for the
lengths of $\alpha''$, $\beta''$). The $-1$ between $\beta'$,
$\beta''$ is forced by the compatibility assumption. That assumption
also forces the weight of $\beta''$ to be zero. (But $\beta''$ need
not be a $1$-factor.) Now $\beta = \beta' || (-1) ||
\beta''$. Furthermore, the weights of $\alpha'$, $\beta'$ must
agree. This means that all defects in $\alpha^-$ are contained within
the sub $1$-factor $\alpha'$. Now we have
\[
\bil{\nat(\alpha^-)}{\omax(\beta^+)} = - \sum_{j=1}^{d-1}
\frac{[j]}{[d]} \bil{\nat(\alpha^{(j)})}{\omax(\beta)}
\]
by Lemma \ref{l:recursion}(d). Now $\alpha^{(d-1)} = \alpha' ||
(-1) || \alpha''$ is obtained by flipping the sign of the last defect
in $\alpha$, and $\beta$ is $\alpha^{(d-1)}$-compatible. On the other
hand, $\beta$ is never $\alpha^{(j)}$-compatible for any $j<d-1$,
thanks to Lemma~\ref{l:defect}. Hence the above sum collapses to a
single term and yields the equality
\[
\bil{\nat(\alpha^-)}{\omax(\beta^+)} =
\frac{-[d-1]}{[d]}\bil{\nat(\alpha^{(d-1)})}{\omax(\beta)}
\]
Since $\alpha^{(d-1)}$ and $\beta$ have shorter length than $\alpha^-$
and $\beta^+$, we have
\begin{equation*}
  \bil{\nat(\alpha^-)}{\omax(\beta^+)} = \frac{-v^m[d-1]}{[d]}\prod_{j=1}^m
    [s_j'(\alpha^{(d-1)},\beta)].\label{reduction}
\end{equation*}
In the last formula $m$ is the number of pairings in $\beta$ which is
the same as the number of pairings in $\beta^+$. Moreover, the
positions $i_1,\ldots, i_m$ of the $-1$ entries in $\beta$ and
$\beta^+$ are the same. Thus $s_j(\beta)=s_j(\beta^+)$ for all
$j$. Suppose that the $-1$ entry in $\beta = \beta'||(-1)||\beta''$
between $\beta'$ and $\beta''$ is in position $i_r$. Then
$\alpha^{(d-1)}_{i_t}= \alpha^-_{i_t}$ for all $t\neq r$, while
$\alpha^{(d-1)}_{i_r}=-1$ and $\alpha^-_{i_r}=1$. Furthermore,
$s_{r}(\beta^+)=s_r(\beta)=d-1$, so we have
$s'_r(\alpha^{(d-1)},\beta)=d$ and
$s'_r(\alpha^-,\beta^+)=-(d-1)$. Thus the denominator in the above
product cancels with $[s'_r(\alpha^{(d-1)},\beta)] = [d]$, leaving
behind a factor of $[-(d-1)] = [s'_r(\alpha^-,\beta^+)]$, while for
all $j \ne r$ we have $[s'_j(\alpha^-,\beta^+)]=
[s'_j(\alpha^{(d-1)},\beta)]$. Hence
\[
\bil{\nat(\alpha^-)}{\omax(\beta^+)} = v^m\prod_{j=1}^m
    [s_j'(\alpha^-,\beta^+)]
\]
as required.

It remains only to deal with the incompatibility situation for
Case~4. Assume from now on that $\beta^+$ is $\alpha^-$-incompatible,
where $\alpha^-$, $\beta^+$ have the same shape. We need to argue that
$\bil{\nat(\alpha^-)}{\omax(\beta^+)} = 0$, by showing that the
right hand side of Lemma \ref{l:recursion}(d) evaluates to zero.  As
above, write
\[
\alpha^- = \alpha' || (1) || \alpha'' || (-1)
\]
where the $1$ and $-1$ are paired and the $1$ is a defect in
$\alpha = \alpha' || (1)|| \alpha''$. As before, $\alpha'$ and
$\alpha''$ are $1$-factors and $i_{\alpha''} = 0$. There are two
subcases to consider: either
\[
\beta^+ = \beta' || (-1) || \beta'' || (1) \quad\text{or}\quad
\beta^+ = \beta' || (1) || \beta'' || (1) 
\]
where the lengths of $\alpha'$, $\beta'$ agree (thus similarly for the
lengths of $\alpha''$, $\beta''$). 

\textbf{Subcase A.} Suppose that $\beta^+ = \beta' || (-1) || \beta''
|| (1)$. Then $\bil{\nat(\alpha^{(j)})}{\omax(\beta)} = 0$ for
all $j < d-1$ by the same argument as before, by Lemma
\ref{l:defect}. Also, there must be a pairing in $\alpha'$ or
$\alpha''$ with a corresponding incompatibility in $\beta$ (meaning
that the corresponding entries in $\beta$ have the same sign). As
$\alpha^{(d-1)} = \alpha' ||(-1)|| \alpha''$, that same
incompatibility makes $\beta = \beta' || (-1) || \beta''$ must be
$\alpha^{(j-1)}$-incompatible, so the right hand side of the equality
\[
\bil{\nat(\alpha^-)}{\omax(\beta^+)} = - \sum_{j=1}^{d-1}
\frac{[j]}{[d]} \bil{\nat(\alpha^{(j)})}{\omax(\beta)}
\]
is equal to zero, proving the desired result in this subcase.

\textbf{Subcase B.} 
Suppose $\alpha^-= \alpha'||(1)||\alpha ''||(-1)$ and $\beta^+ =
\beta'||(1)||\beta''||(1)$. In this case we already have an
incompatibility in $\beta^+$ with the rightmost pairing in
$\alpha^-$. If there is another such incompatibility in $\beta^+$ then
necessarily this incompatibility occurs with respect to a pairing in
either $\alpha'$ or $\alpha''$. This second incompatibility remains an
$\alpha^{(j)}$-incompatibility in $\beta$ for all $j=1,\ldots
d-1$. Thus $\bil{\nat(\alpha^-)}{\omax(\beta^+)} = 0$, as
desired.

It remains only to consider the case where there are no incompatible
pairings in $\alpha^-$ except the rightmost one. By weight
considerations, there must be exactly one defect in $\alpha^-$ with
corresponding entry in $\beta^+$ equal to $-1$. This defect is
necessarily in $\alpha'$. Suppose this is the $k$th defect in
$\alpha$. Then this defect remains a defect in all $\alpha^{(j)}$
except when $j=k-1$ or $j=k$, so for those values of $j$ we have
$\bil{\nat(\alpha^{(j)})}{\omax(\beta)} = 0$. Therefore
\[
\bil{\nat(\alpha^-)}{\omax(\beta^+)} =
\frac{-1}{[d]}\left( [k-1]\bil{\nat(\alpha^{(k-1)})}{\omax(\beta)}
+ [k]\bil{\nat(\alpha^{(k)})}{\omax(\beta)} \right).
\]
{\bf Claim:} $\beta$ is compatible with both $\alpha^{(k-1)}$ and
$\alpha ^{(k)}$. We show this first for $\beta$ and
$\alpha^{(k)}$. Consider a pairing in $\alpha^{(k)}$. If this pairing
is not the one that links the $k$th defect in $\alpha$ with the next
one, then the corresponding signs in $\beta$ must be different. For if
these signs were the same, then this pairing would also be a pairing
in $\alpha$ and with corresponding signs $\beta$ the same, which in
turn represents an incompatible pairing in $\alpha^-$ different from
the rightmost one, contradicting the hypothesis in this case. Now
consider the pairing which links the $k$th defect in $\alpha$ with
the next one. The signs in the corresponding entries of $\beta$ cannot
be the same. For if they were, they would both be equal to $-1$ and
would force another pairing in $\alpha ^{(k)}$ with corresponding
signs in $\beta$ both 1. This forced pairing in $\alpha^{(k)}$ is also
an incompatible pairing in $\alpha^-$ different from the rightmost
one, again contradicting the hypothesis. Thus $\beta$ is
$\alpha^{(k)}$-compatible. A similar argument verifies that $\beta$ is
$\alpha^{(k-1)}$-compatible and the claim is proved.

Since $\alpha^{(k-1)}, \alpha^{(k)}$, and $\beta$ have shorter lengths
than $\alpha^-$ and $\beta^+$, we have
\begin{eqnarray*}
  \bil{\nat(\alpha^{(k-1)})}{\omax(\beta)} &=& v^m
  \prod_{j=1}^m[s_j'(\alpha^{(k-1)},\beta)]\\ \bil{
  \nat(\alpha^{(k)})}{\omax(\beta)} &=& v^m
  \prod_{j=1}^m[s_j'(\alpha^{(k)},\beta)]
\end{eqnarray*}
by the inductive hypothesis. For $j\neq k$ we have
$s'_j(\alpha^{(k-1)},\beta)=s'_j(\alpha^{(k)},\beta)$ because the
entries in $\beta$ corresponding to defects $k-1$ and $k$ of $\alpha$
must be 1, since by hypothesis the unique defect of $\alpha$ with a
$-1$ corresponding entry in $\alpha$ is the $k$th defect. Since
$\beta$ is compatible with $\alpha^{(k-1)}$ and $\alpha^{(k)}$, we
have $s'_j(\alpha^{(k-1)},\beta)=k$ and
$s'_j(\alpha^{(k)},\beta)=-(k-1).$ Therefore
\begin{multline*}
  [k-1]\bil{\nat(\alpha^{(k-1)})}{\omax(\beta)} + [k]
  \bil{\nat(\alpha^{(k)})}{\omax(\beta)} \\ = v^m
  \left(\prod _{j\neq k}
       [s_j'(\alpha^{(k)},\beta)]\right) \big([k-1][k]-[k][k-1] \big)
       =0
\end{multline*}
which leads to the desired result that
$\bil{\nat(\alpha^-)}{\omax(\beta^+)} = 0$.
\end{proof}

\begin{cor}\label{c:closed-formula}
Under the same hypotheses as the previous theorem, we have
\[
\pi_{\alpha,\beta} = \frac{\bil{\nat(\alpha)}{\omax(\beta)}}%
{\bil{\omax(\beta)}{\omax(\beta)}} =
  \begin{cases}
  \prod_{j=1}^m \frac{[s'_j]}{[s_j][s_j+1]} & \text{ if $\beta$ is
    $\alpha$-compatible} \\
  \qquad 0  & \text{ otherwise.}
  \end{cases}
\]
In the nonzero case, the above product simplifies to the reciprocal of
a product of quantum integers, up to sign.
\end{cor}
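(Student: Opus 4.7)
The first equality is immediate: because the vectors $\omax(\beta)$ are pairwise orthogonal (Lemma~\ref{l:independence}) and non-isotropic, pairing the expansion
\[
\nat(\alpha) = \sum_{\gamma \le \alpha} \pi_{\alpha,\gamma} \omax(\gamma)
\]
from Corollary~\ref{c:triangular} against $\omax(\beta)$ isolates a single term, yielding $\bil{\nat(\alpha)}{\omax(\beta)} = \pi_{\alpha,\beta}\bil{\omax(\beta)}{\omax(\beta)}$.

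The plan is then to obtain a closed form for the denominator $\bil{\omax(\beta)}{\omax(\beta)}$ and divide. The numerator $\bil{\nat(\alpha)}{\omax(\beta)}$ is already given by Theorem~\ref{t:closed-formula}: it vanishes unless $\beta$ is $\alpha$-compatible, in which case it equals $v^m \prod_{j=1}^m [s'_j]$. For the denominator, I proceed by induction on the length of $\beta$. The vector $\omax(\beta)$ is built step by step from $\omax(\emptyset)=1$ by applying $\Phi_1$ when $\beta_k = 1$ and $\Phi_2$ when $\beta_k = -1$. By Lemma~\ref{l:orthogonality}(a), applying $\Phi_1$ preserves the inner product. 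By Lemma~\ref{l:orthogonality}(c), applying $\Phi_2$ at a step where the current vector has weight $i$ multiplies the inner product by $v[i][i+1]$. The weight at the step just before the $j$-th occurrence of $-1$ in $\beta$ is precisely $s_j = \beta_1+\cdots+\beta_{i_j - 1}$, so the induction yields
\[
\bil{\omax(\beta)}{\omax(\beta)} = v^m \prod_{j=1}^m [s_j][s_j+1],
\]
which incidentally agrees with the formula \eqref{e:length-squared} for $\bil{\nat(\beta)}{\nat(\beta)}$.

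Substituting both formulas into the orthogonality identity and cancelling $v^m$ gives $\pi_{\alpha,\beta} = \prod_{j=1}^m [s'_j]/([s_j][s_j+1])$ whenever $\beta$ is $\alpha$-compatible, and zero otherwise. For the final claim, I examine each factor according to the sign of $\alpha_{i_j}$. If $\alpha_{i_j} = -1$, then by definition $s'_j = s_j+1$, and the factor collapses to $1/[s_j]$. If $\alpha_{i_j} = 1$, then $s'_j = -s_j$, so using $[-s_j] = -[s_j]$ the factor collapses to $-1/[s_j+1]$. In either case the $j$-th factor is, up to sign, the reciprocal of a single quantum integer, so the whole product is (up to a global sign) the reciprocal of a product of quantum integers.

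The argument is essentially a bookkeeping exercise once Theorem~\ref{t:closed-formula} and Lemma~\ref{l:orthogonality} are in hand; the only step that requires any care is verifying that the inductive computation of $\bil{\omax(\beta)}{\omax(\beta)}$ picks up exactly the factor $v[s_j][s_j+1]$ at the $j$-th application of $\Phi_2$, which amounts to tracking that the current weight at that step is indeed $s_j$.
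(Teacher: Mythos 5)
Your proof is correct and takes essentially the same route as the paper: Theorem~\ref{t:closed-formula} for the numerator, the norm formula $\bil{\omax(\beta)}{\omax(\beta)} = v^m\prod_{j=1}^m[s_j][s_j+1]$ for the denominator, and the cancellation $[s'_j]/([s_j][s_j+1])=\pm 1/[s_j]$ or $\pm 1/[s_j+1]$ for the final claim. One small point: your inductive computation of $\bil{\omax(\beta)}{\omax(\beta)}$ is exactly what \eqref{e:length-squared} is meant to assert (its left-hand side appears to be a typo for $\omax(\beta)$, since $\bil{\nat(\beta)}{\nat(\beta)} = v^m[2]^m$ by Lemma~\ref{l:N-formula}), so the two formulas do not merely ``incidentally agree'' --- your derivation is the intended justification of that display.
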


\begin{proof}
This is immediate from the preceding theorem combined with formula
\eqref{e:length-squared}. The last claim follows from the definition
of $s'_j(\alpha,\beta)$, which always (up to sign) cancels with a
factor in the denominator.
\end{proof}

\begin{example}
If $\alpha = (1^{m+1},-1)$ then $\nat(\alpha) = \frac{1}{[m]}
\omax(\alpha) - \sum_{j=1}^{m-1} \frac{[j]}{[d]} \Phi_1
\nat(\alpha^{(j)})$, by Corollary~\ref{c:recursion}(b). There is
only one $1$-factor $\beta \ne \alpha$ which is $\alpha$-compatible,
namely $\beta = (1^m,-1,1)$. By Corollary \ref{c:closed-formula}, we
have
\[
\nat(\alpha) = \frac{1}{[m]} \omax(\alpha) - \frac{1}{[m]}
\omax(\beta).
\]
This indicates that there is cancellation when one continues to
recursively expand the right hand side of $\nat(\alpha) =
\frac{1}{[m]} \omax(\alpha) - \sum_{j=1}^{m-1} \frac{[j]}{[d]}
\Phi_1 \nat(\alpha^{(j)})$ using Corollary~\ref{c:recursion}(b).
\end{example}

\section{Cellularity}\label{s:cellular}\noindent
In this section we prove that the basis $\{\nat(\alpha)\}$ indexed by
$1$-factors $\alpha$ of shape $\lambda$ (where $\lambda$ is a
partition of $n$ with at most two parts) form a cellular basis of the
cell module $C(\lambda)$.  Axiomatics for cellular algebras were
introduced in \cite{GL:96}, and the Temperley--Lieb algebra
$\TL_n(\delta)$ was a primary motivating example.

For the moment, let $\Bbbk$ be a commutative ring.  Consider the
diagram algebra which has $\Bbbk$-basis consisting the planar Brauer
diagrams: graphs on $2n$ vertices (arranged in two parallel rows of
$n$ each) with $n$ edges, such that each vertex is joined to exactly
one other and the edges do not intersect when drawn in the rectangle
determined by the vertices. If $D_1$, $D_2$ are two such diagrams then
their composite diagram $D_1 \circ D_2$ is formed by stacking $D_1$
above $D_2$ and removing interior loops and vertices (this is another
basis diagram). The (associative) multiplication in the algebra is
given by the rule
\[
D_1 D_2 = \delta^{N} (D_1 \circ D_2)
\]
where $N$ is the number of removed interior loops. 

\begin{thm}[Kauffman]
The Temperley--Lieb algebra $\TL_n(\delta)$ is isomorphic to the
diagram algebra described in the preceding paragraph.
The isomorphism takes the generator $e_i$ to the diagram
\[
\onepic \cdots \onepic\quad \epic \quad\onepic \cdots \onepic
\]
in which the $i$th vertex in each row is horizontally linked to the
next one.
\end{thm}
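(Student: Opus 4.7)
The plan is to construct a map $\phi : \TL_n(\delta) \to D_n$ (where $D_n$ denotes the planar diagram algebra) that sends the generator $e_i$ to the indicated diagram, show it is a well-defined algebra homomorphism, and then verify that it is bijective by a dimension count combined with surjectivity.

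First I would verify that the diagrams $E_i$ (the image of $e_i$) satisfy the Temperley--Lieb relations \eqref{e:TL}. The quadratic relation $E_i^2 = \delta E_i$ is a direct diagrammatic calculation: stacking two copies of $E_i$ produces a single internal loop between positions $i$ and $i+1$, which by definition of the multiplication is removed at the cost of a factor $\delta$, leaving $E_i$ itself. The far-commutation $E_i E_j = E_j E_i$ for $|i-j|>1$ is obvious because the horizontal links lie in disjoint tensor slots and may be drawn independently. The cubic relation $E_i E_{i\pm 1} E_i = E_i$ is again a finite planar check: composing the three diagrams, the strand starting at the $i$th top vertex is routed through the middle layer and returns to the $(i+1)$th top vertex, while at the bottom the same thing happens, and no loops are produced. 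Once these three families of relations are confirmed, the universal property of $\TL_n(\delta)$ gives an algebra map $\phi$.

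Next I would establish surjectivity: every planar Brauer diagram arises as a product of the $E_i$. The standard argument goes by induction on $n$, or more concretely by exhibiting a normal form. Given a planar diagram $D$, one can read its leftmost "cap" (an arc linking two adjacent top vertices $j, j+1$) or leftmost through-strand and use the corresponding $E_j$ to peel it off, obtaining a diagram on fewer vertices (or with simpler structure) that lies in the subalgebra generated by the remaining $E_k$. Iterating this produces a word in the $E_i$ whose composite is $D$. This step is the main obstacle, since one must be careful that the reduction terminates and that no stray loops (which would introduce unwanted factors of $\delta$) are created during the factorization; the cleanest way is to show directly that each "cap-cup" pattern in $D$ corresponds to an explicit product of generators, perhaps using the half-diagram decomposition already implicit in Section~\ref{s:cellular}.

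Finally, I would conclude by the dimension (or rank) count. By the preceding paragraph, $\phi$ is surjective. On the other hand, the number of planar Brauer diagrams on $2n$ vertices is the Catalan number $\frac{1}{n+1}\binom{2n}{n}$, which by the rank formula for $\TL_n(\delta)$ equals $\mathrm{rk}_\Bbbk \TL_n(\delta)$. If $\Bbbk$ is a field (or in general, after noting that both sides are free $\Bbbk$-modules of the same rank), a surjection between free modules of the same finite rank is an isomorphism, so $\phi$ is injective as well. This completes the identification of $\TL_n(\delta)$ with the planar diagram algebra, with $e_i$ mapped to the specified diagram.
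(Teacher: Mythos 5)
The paper does not prove this statement at all: it is recorded as a classical theorem of Kauffman and used as a black box (the only fact about $\TL_n(\delta)$ the authors allow themselves elsewhere is its rank, stated in \S\ref{s:prelim}). So there is no ``paper proof'' to compare against, and your sketch should be judged on its own. It follows the standard route and is essentially sound: checking that the diagrams $E_i$ satisfy the relations \eqref{e:TL} gives a homomorphism $\phi$ by the universal property of the presentation; surjectivity plus equality of ranks forces $\phi$ to be an isomorphism (your parenthetical about free modules of equal finite rank over a general commutative ring is the right caveat, handled by the determinant trick). Two points deserve emphasis. First, the surjectivity step is where all the real content lives, and your description of ``peeling off a cap'' is too loose to terminate as stated; the clean argument factors an arbitrary planar diagram with $t$ through-strands as (top half-diagram)$\circ$(bottom half-diagram) and writes each half as an explicit Jones normal form $(e_{i_1}e_{i_1-1}\cdots e_{j_1})(e_{i_2}\cdots e_{j_2})\cdots$ with $i_1<i_2<\cdots$ and $j_1<j_2<\cdots$, checking that no closed loops arise. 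Second, be aware of a potential circularity: the usual proof that $\mathrm{rk}_\Bbbk\,\TL_n(\delta)$ equals the Catalan number is itself obtained by counting these normal-form words (an upper bound on the span) and then using the diagram representation to see they are independent. Since the present paper explicitly takes the rank formula as known, your appeal to it is legitimate in context, but a genuinely self-contained proof would replace the rank count by the two inequalities (spanning by at most Catalan-many words, and Catalan-many linearly independent diagrams in the image).
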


Let $L_0$ be the span of the set of link diagrams (introduced in the
paragraph before Example~\ref{ex:link-dia}) on $n$ vertices. Define an
action of $\TL_n(\delta)$ on $L_0$ by letting a diagram $D$ act by
``diagram multiplication'' as follows. Stack $D$ above a given link
diagram $\ell$ and remove any loops. The resulting composite diagram
is a new link diagram $D \circ \ell$. Then we define
\[
D \ell = \delta^{N} (D \circ \ell) 
\]
where $N$ is the number of loops that were removed. With this action,
$L_0$ becomes a $\TL_n(\delta)$-module. Furthermore, the link diagram
$D \circ \ell$ always has at least as many links as does $\ell$ (and
it can have more in some cases). For any $d \in X(n)_+$, let $L_0^{\le
  d}$ (resp., $L_0^{< d}$) be the $\Bbbk$-submodule spanned by the set
of link diagrams with at most (resp., fewer than) $d$ defects, and define
\[
L(d) := L_0^{\le d} / L_0^{< d}.
\]
We will also write $L(\lambda) = L(d)$, where $\lambda$ is the
(unique) partition of at most two parts such that $\lambda_1 -
\lambda_2 = d$.  If $\delta \ne 0$, the set $\{ L(\lambda) \}$
(indexed by such partitions) is a complete set of cell modules for
$\TL_n(\delta)$. In case $\Bbbk$ is a field and $\TL_n(\delta)$ is
semisimple, the cell modules are all simple modules, and we get a
complete set of isomorphism classes of simples.

Now we let $\alpha^\ell$ be the $1$-factor corresponding to a link
diagram $\ell$ under the bijection in Lemma \ref{l:bijections}. Define
a map $\varphi: L(\lambda) \to C(\lambda)$ by the rule
\begin{equation}\label{e:cell-iso}
\varphi(\ell) = \nat(\alpha^\ell)  .
\end{equation}
The following is the main result of this section.

\begin{thm}
Let $\Bbbk$ be a field.  Assume that $[n]! \ne 0$, where $0 \ne v \in
\Bbbk$, and set $\delta = v+v^{-1}$. Then the map $\varphi$ defined in
\eqref{e:cell-iso} is an isomorphism $L(\lambda) \cong C(\lambda)$ of
$\TL_n(-\delta)$-modules. We also have $L(\lambda) \cong C(\lambda)$,
as $\TL_n(\delta)$-modules.
\end{thm}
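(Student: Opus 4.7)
The plan is to decompose the claim into two pieces: that $\varphi$ is a $\Bbbk$-linear bijection, and that it intertwines the $\TL_n(-\delta)$-actions on the two sides. For the first, I would combine Lemma~\ref{l:bijections} (link diagrams on $n$ vertices with $p$ links biject with $1$-factors of shape $(n-p,p)$) with Corollary~\ref{c:triangular}: link diagrams with exactly $d = \lambda_1-\lambda_2$ defects descend to a basis of $L(\lambda) = L_0^{\le d}/L_0^{<d}$, while $\{\nat(\alpha) : \alpha \text{ of shape } \lambda\}$ is a $\Bbbk$-basis of $C(\lambda)$; the bijection of Lemma~\ref{l:bijections} matches one basis with the other.

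Before verifying equivariance I would settle the algebras on both sides. From $\dot{z}_0^2 = \dot{z}_0$ and the definition $-\dot{e}_i = 1^{\otimes(i-1)}\otimes (v+v^{-1})\dot{z}_0 \otimes 1^{\otimes(n-i-1)}$, a short computation yields $\dot{e}_i^2 = -\delta\,\dot{e}_i$ with $\delta = v+v^{-1}$, so $V^{\otimes n}$ and hence $C(\lambda)$ is a $\TL_n(-\delta)$-module. Taking loop value $-\delta$ in the diagram action makes $L(\lambda)$ a $\TL_n(-\delta)$-module as well, so both sides live over the same algebra.

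To prove equivariance I would verify $\dot{e}_i \varphi(\ell) = \varphi(e_i\cdot\ell)$ on each generator $e_i$ and each link-diagram basis vector $\ell$, by local analysis at positions $i, i+1$ of $\ell$. There are four configurations: both vertices are defects; they are directly linked to each other; exactly one is a defect; or each is linked to a different partner. Cases one and two are immediate: by Lemma~\ref{l:N-formula}, every term in the expansion of $\nat(\alpha^\ell)$ carries $y_1 \otimes y_1$ at positions $i, i+1$ in the first case and the factor $z_0$ in the second; $\dot{e}_i$ kills $y_1\otimes y_1$ (matching the link side, where joining two defects produces a diagram with $d-2$ defects, zero in $L(\lambda)$), and $\dot{e}_i(z_0) = -\delta\, z_0$ (matching the loop scalar $-\delta$). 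The mixed and crossing cases come down to computing the action of the local projector $\Pi = (v+v^{-1})\dot{z}_0$ on $y_1 \otimes y_{-1}$ and $y_{-1}\otimes y_1$ and tracking the signs through Lemma~\ref{l:N-formula}.

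The main obstacle is the crossing case, where the data of $\nat(\alpha^\ell)$ at positions $i, i+1$ is not a pure local factor but a signed superposition inherited from a surrounding nesting. I would handle this by induction on $n$ using Lemma~\ref{l:commute-Phi}: for $i \le n-2$, $\dot{e}_i$ commutes with both $\Phi_1$ and $\Phi_2$, so when the last vertex of $\ell$ is a defect (so that $\nat(\alpha^\ell) = \Phi_1 \nat(\alpha^{\ell'})$) or belongs to the outermost link, equivariance reduces to the inductive hypothesis on a shorter tensor power, together with a combinatorial check that ``delete the last vertex'' on link diagrams matches the corresponding $\Phi$-operation on $\nat$-vectors. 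Finally, the isomorphism $L(\lambda) \cong C(\lambda)$ as $\TL_n(\delta)$-modules is obtained by pulling $\varphi$ back through the algebra isomorphism $\TL_n(\delta)\to\TL_n(-\delta)$, $e_i\mapsto -e_i$: this twists both module structures by the same sign, so the identical map $\varphi$ remains equivariant.
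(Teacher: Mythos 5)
Your overall skeleton is the paper's: since $\varphi$ matches bases (via Lemma~\ref{l:bijections} and Corollary~\ref{c:triangular}), everything reduces to checking $e_i\varphi(\ell)=\varphi(e_i\ell)$ generator by generator, by a case analysis on the local configuration of $\ell$ at positions $i$, $i+1$. Your treatment of the two easy configurations (both positions defects; positions mutually linked), your identification of $-\delta$ as the correct parameter, and your final sign-twist through $\TL_n(\delta)\cong\TL_n(-\delta)$ all agree with the paper. The divergence is in the non-local configurations, and that is where your argument has a genuine gap. The paper disposes of these by computing directly with the expansion of Lemma~\ref{l:N-formula}: a defect position contributes a fixed factor $y_1$ to every term of the expansion, so the local operator $(v+v^{-1})\dot{z}_0$ can be applied term by term and the surviving terms reassembled into $\nat(\beta)$ (this is carried out for the configuration $(1,1)$ with one defect; the paper then enumerates seven local configurations in all and leaves the similar ones to the reader).

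Your proposed substitute for the hard cases --- induction on $n$ via Lemma~\ref{l:commute-Phi} --- does not actually reduce them. First, when the last vertex of $\ell$ closes a link, $\nat(\alpha^\ell)$ is \emph{not} $\Phi_2$ of a shorter $\nat$-vector: writing $\ell=\ell_1||\Psi(\ell_2)$ gives $\nat(\alpha^{\ell})=\nat(\alpha^{\ell_1})\otimes\Psi(\nat(\alpha^{\ell_2}))$, whereas by Theorem~\ref{t:Phi-on-N}(b) the vector $\Phi_2\nat(\gamma)$ for the truncated $\gamma$ is a sum $\sum_j [j]\nat(\gamma^{+(j)})$ of which your nested diagram is only one summand. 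So ``delete the last vertex and apply the inductive hypothesis'' is unavailable in the $\Phi_2$ direction; at best you can push $e_i$ through the linear combination of Corollary~\ref{c:recursion}(b) and then must recombine the answers, which is a substantially different and unexecuted argument. Second, Lemma~\ref{l:commute-Phi} says nothing about $i=n-1$, and genuinely non-local configurations occur there too (for instance $(\alpha_{n-1},\alpha_n)=(-1,1)$ with $\alpha_n$ a defect and $\alpha_{n-1}$ closing a distant link, or the nested $(-1,-1)$ pattern), so those would have to be computed by hand in any case. The cleanest repair is to drop the induction and run the paper's direct computation with Lemma~\ref{l:N-formula} uniformly in all local configurations.
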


\begin{proof}
As $\varphi$ is a bijection, we only need to show that it is a
morphism of $\TL_n(-\delta)$-modules.  Since $\TL_n(-\delta)$ is
generated by the $e_i$ ($i = 1, \dots, n-1$) it suffices to show that
$e_i \varphi(\ell) = \varphi(e_i \ell)$ for each $i$.

Fix $\ell$, and set $\alpha := \alpha^\ell$, $\beta := \alpha^{e_i
  \ell}$.  Let $\lambda$ be the shape of $\alpha$. The proof breaks
into seven cases based on the value of the pair $(\alpha_i,
\alpha_{i+1})$, as enumerated below:
\begin{enumerate}\renewcommand{\labelenumi}{\arabic{enumi}.}
\item $(\alpha_i, \alpha_{i+1}) = (1,1)$ and both values are defects
  in $\alpha$.
\item $(\alpha_i, \alpha_{i+1}) = (1,1)$ and only one (necessarily the
  left) is a defect in $\alpha$.
\item $(\alpha_i, \alpha_{i+1}) = (1,1)$ and neither is a defect in
  $\alpha$.
\item $(\alpha_i, \alpha_{i+1}) = (-1,1)$ and the second term is a defect in
  $\alpha$.
\item $(\alpha_i, \alpha_{i+1}) = (-1,1)$ and the second term is not a
  defect in $\alpha$.
\item $(\alpha_i, \alpha_{i+1}) = (1,-1)$.
\item $(\alpha_i, \alpha_{i+1}) = (-1,-1)$.
\end{enumerate}
One needs to consider the cases one by one, using the explicit sum formula
\[
\nat(\alpha) = \sum_{\gamma \in \mathcal{S}(\alpha)}
(-v)^{\sigma(\alpha,\gamma)} y_{\gamma_1, \dots, \gamma_n}
\]
from Lemma~\ref{l:N-formula} and the action of the $e_i$ on
$V^{\otimes n}$ defined in equation \eqref{e_i-action}.

\textbf{Case 1.} In this case, $e_i \ell$ has one more link than
$\ell$, so $e_i \ell = 0$ in the quotient module
$L(\lambda)$. Furthermore, each term $y_\gamma = y_{\gamma_1, \dots,
  \gamma_n}$ on the right hand side of the explicit sum formula for
$\nat(\alpha)$ has $y_{1,1}$ in tensor positions $i$, $i+1$. By
\eqref{e_i-action} we have $e_i \nat(\alpha) = 0$, so this case is
done.

\textbf{Case 2.} In this case the rightmost $1$ in $(\alpha_i,
\alpha_{i+1}) = (1,1)$ is linked to some $-1$ to its right in
$\alpha$. Let $j>i+1$ be the position of that $-1$. The $1$-factor
$\beta$ is obtained from $\alpha$ by interchanging the terms in those
positions. Consider the right hand side of the sum formula for
$\nat(\alpha)$. Since $\alpha$ has a defect in position $i$, every
$y_\gamma$ has $\gamma_i = 1$. Hence we see only terms on that right
hand side containing factors in tensor positions $i$, $i+1$ of one of
the two forms $y_{\gamma_i,\gamma_{i+1}} = y_{1,1}$ or $y_{1,-1}$. The
action of $e_i$ kills all of the former terms, and acts as
$-v^{-1}y_{1,-1} + y_{-1,1}$ in those positions. After making that
replacement in each of the surviving terms, we obtain the sum formula
for $\nat(\beta)$. Case 2 is proved, but to make the argument
concrete, we give an example. Suppose that $\alpha =
(1^3,-1^2)$ and $i=1$. The second $1$ in $\alpha$ is linked to the
last $-1$, so $j = 5$ and $\beta = (1,-1,1,-1,1)$. Now
\begin{align*}
\nat(\alpha) &= y_{1,1,1,-1,-1} - v y_{1,1,-1,1,-1} - v
y_{1,-1,1,-1,1} + v^2 y_{1,-1,-1,1,1} \intertext{and $e_1$ kills the
  first two terms, replacing the initial factor of $y_{1,-1}$ in the
  surviving terms by $-v^{-1}y_{1,-1} + y_{-1,1}$, to produce}
\nat(\beta) &= y_{1,-1,1,-1,1} - v y_{-1,1,1,-1,1} - v
y_{1,-1,-1,1,1} + v^2 y_{-1,1,-1,1,1}
\end{align*}
as required. 

\textbf{Case 6.} In this case $\alpha$ has a pairing in tensor
positions $i$, $i+1$ and $\beta = \alpha$. Carrying out the
calculation in general is no more difficult than doing it for $i=1$
and $\alpha=(1,-1)$. Then $\nat(\alpha) = z_0 = y_{1,-1}-vy_{-1,1}$
and from equation \eqref{e_i-action} we see that
\begin{align*}
e_1 \nat(1,-1) &= (-v^{-1}y_{1,-1}+y_{-1,1}) - v(y_{1,-1}-vy_{-1,1}) \\
&= -(v+v^{-1}) \nat(1,-1).
\end{align*}

We leave the remaining cases, which are similar, to the reader. The
exercise of working through an illustrative example for each remaining
case is quite instructive.

Because of the minus sign appearing in the Case 6 calculation,
$\varphi$ is an isomorphism of $\TL_n(-\delta)$-modules, where
$\delta=v+v^{-1}$. The final claim follows because $\TL_n(\delta)
\cong \TL_n(-\delta)$.
\end{proof}

\begin{rmk}\label{r:signs}
Suppose one changes the sign choice in equation \eqref{e_i-action}.
In other words, suppose that $\dot{e}_i$ is defined by
\[
\dot{e}_i = 1^{\otimes(i-1)} \otimes (v+v^{-1})\dot{z}_0
\otimes 1^{\otimes (n-1-i)}.
\]
Then the map $\varphi$ is no longer a $\TL_n(-\delta)$-isomorphism,
nor is it a $\TL_n(\delta)$-isomorphism (it is enough to look at the
case $\lambda=(2,1)$ to see the issue).  This justifies our choice of
sign in \eqref{e_i-action}.
\end{rmk}

\section{Action of $\TL_n$ on the orthogonal maximal vectors}%
\label{s:action}\noindent
Let $\Bbbk$ be a field, and set $\delta = \pm(v+v^{-1})$.  We
now derive an explicit formula for the action of $\TL_n(\delta)$ on
the $\omax(\alpha)$, for a given $1$-factor $\alpha$ of length $n$.
Given $i \ge 1$ which is strictly less than the length of $\alpha$,
write
\[
\alpha = \alpha'||(\alpha_i,\alpha_{i+1})||\alpha''
\]
and let $w = w_i(\alpha)$ be the sum of the entries in $\alpha'$. Then
of course $w \ge 0$. By Lemma~\ref{l:tensor-action}, equation
\eqref{e_i-action} defines a $\TL_n(\delta)$-module structure on
$V^{\otimes n}$.

\begin{prop}\label{p:action}
Fix $0 \ne v \in \Bbbk$ satisfying $[n]! \ne 0$.  For any $1$-factor
$\alpha$ of length $n$ such that $\alpha_i \alpha_{i+1} = -1$, set
$\beta = \alpha'||(-\alpha_i,-\alpha_{i+1})||\alpha''$.
\begin{enumerate}
\item If $w = w_i(\alpha) = 0$ then $e_i \omax(\alpha) = -(v+v^{-1})
  \omax(\alpha)$.
\item If $w = w_i(\alpha) > 0$ then
  \[
  e_i \omax(\alpha) =
  \begin{cases}
    \frac{[w+2]}{[w+1]} \big(\omax(\beta) - \omax(\alpha)\big) &
      \text{if } (\alpha_i, \alpha_{i+1}) = (1,-1) \\
    \frac{[w]}{[w+1]} \big((\omax(\beta) - \omax(\alpha)\big) &
    \text{if } (\alpha_i, \alpha_{i+1}) = (-1,1) \\
    \qquad\qquad 0 & \text{otherwise.}
  \end{cases}
  \]
\end{enumerate}
\end{prop}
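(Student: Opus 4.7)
My strategy is to reduce to the case $i+1 = n$ via Lemma~\ref{l:commute-Phi}, and then expand directly. Writing $\omax(\alpha) = \Upsilon_n \Upsilon_{n-1} \cdots \Upsilon_1(1)$, and observing that $e_i$ commutes with every $\Phi_j$ acting in a strictly later tensor position, I will pull $\Upsilon_{i+2}, \dots, \Upsilon_n$ outside $e_i$. This reduces the problem to computing $e_{n-1}(\Upsilon_n \Upsilon_{n-1}(b))$ where $b := \omax(\alpha')$ is a maximal vector of weight $w = w_i(\alpha)$ in $V^{\otimes(n-2)}$ and the pair $(\Upsilon_{n-1}, \Upsilon_n) \in \{\Phi_1, \Phi_2\}^2$ encodes $(\alpha_i, \alpha_{i+1})$.

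Next, in each case for $(\alpha_i, \alpha_{i+1})$ I will expand $\Upsilon_n \Upsilon_{n-1}(b)$ against the set $\{b \otimes y_{\epsilon,\epsilon'}, Fb \otimes y_{\epsilon,\epsilon'}, F^2 b \otimes y_{\epsilon,\epsilon'}\}$ using the definitions of $\Phi_1, \Phi_2$ and the coproduct from Theorem~\ref{t:tensors}. The only auxiliary facts I need are $F(b \otimes y_{-1}) = v\,Fb \otimes y_{-1}$ and $F(b \otimes y_1) = v^{-1} Fb \otimes y_1 + b \otimes y_{-1}$ (from $\Delta F = F \otimes K^{-1} + 1 \otimes F$), together with $Eb = 0$ and $EFb = [w]b$ by maximality and relation~\eqref{R2}. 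Then I apply $e_{n-1}$ using the explicit action in the last two positions: it kills $y_{1,1}$ and $y_{-1,-1}$, while $y_{1,-1} \mapsto -v^{-1}y_{1,-1} + y_{-1,1}$ and $y_{-1,1} \mapsto y_{1,-1} - v\,y_{-1,1}$.

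The matching-sign cases $(\pm 1,\pm 1)$ collapse quickly: $(1,1)$ gives $b \otimes y_{1,1}$, killed by $e_{n-1}$, and in $(-1,-1)$ the surviving $y_{1,-1}$ and $y_{-1,1}$ contributions cancel. When $w=0$ with $(\alpha_i,\alpha_{i+1})=(1,-1)$, maximality forces $Fb=0$, so $\omax(\alpha) = b \otimes z_0$; since $z_0$ is an eigenvector of $-(v+v^{-1})\dot{z}_0$ with eigenvalue $-(v+v^{-1})$, the scalar action follows. For $w>0$ with $(\alpha_i,\alpha_{i+1})=(1,-1)$ or $(-1,1)$, I will compute $\omax(\beta)$ analogously (with $\Phi_1,\Phi_2$ swapped in the last two slots) and match coefficients; the verification reduces to the elementary quantum integer identities $v^{w+1} + [w] = v[w+1]$, $v^{-1}[w+1] + v^{w+1} = [w+2]$, and $[w+1] + v^{w+2} = v[w+2]$, each immediate from $[k]=\sum_t v^{k-1-2t}$. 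The main obstacle is not conceptual but purely bookkeeping: tracking coefficients through several nested applications of $\Phi_1, \Phi_2$ and collecting them correctly into the compact ratios $[w+2]/[w+1]$ and $[w]/[w+1]$ requires care, but needs no machinery beyond what has already been developed in Sections~\ref{s:rk1SA}--\ref{s:orthmv}.
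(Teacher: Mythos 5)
Your proposal is correct and follows essentially the same route as the paper's proof: the paper also reduces to the case where $e_i$ acts on the last two tensor positions (phrased as an induction on $n$ whose inductive step is exactly the commutation of $e_i$ with $\Phi_1$, $\Phi_2$ from Lemma~\ref{l:commute-Phi}) and then performs the same direct coproduct computation in those two positions, using the same quantum-integer identities. The only differences are organizational: the paper packages the base computation via the identity $\omax(\alpha^{-+})-\omax(\alpha^{+-})=-[i_\alpha+1]\,\omax(\alpha)\otimes z_0$ rather than expanding both sides against the monomial basis as you do.
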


\begin{proof}
The proof is by induction on $n$. The base case
$n=2$ is clear.

{\bf Case 1}: Suppose $i<n-1$ and that $\alpha$ is a $1$-factor of
length $n-1$. We need to verify the formulas for $e_{i}
\omax(\alpha^+)$ and $e_{i} \omax(\alpha^-)$.

First consider $e_i \omax(\alpha^+)$.  By induction the formula for
the expansion of $e_i \omax(\alpha)$ can be applied in the equality
$e_i\omax(\alpha^+) = (e_i \omax(\alpha)) \otimes y_1$. The result
then follows in all cases by direct calculation.

Next consider $e_i \omax(\alpha^-)$, still under the assumption that
$i<n-1$. Then by definition $e_i \omax(\alpha^-) =
e_i(\Phi_2(\omax(\alpha)))$, so
\begin{equation*}
  e_i \omax(\alpha^-) = [i_{\alpha}] (e_i \omax(\alpha))\otimes y_{-1}
  - v^{i_{\alpha}} e_i(F\omax(\alpha))\otimes y_1.
\end{equation*}
If $\alpha_i \alpha_{i+1} = 1$ then both terms on the right hand side
above are zero by induction, so we are done in that case.  Suppose next
that $(\alpha_{i},\alpha_{i+1})=(1,-1)$ with $w_i(\alpha)>0$ and set
$c=\frac{[w_i(\alpha)+2]}{[w_i(\alpha)+1]}$. Then by induction we have
\begin{align*}
e_i \omax(\alpha^-) &= c\,[i_{\alpha}] \left(\omax(\alpha'||
(-1,1)||\alpha '')-\omax(\alpha'|| (1,-1)||\alpha '')\right) \otimes
y_{-1}\\ &\qquad -c\,v^{i_{\alpha}} \left(F(\omax(\alpha'||
(-1,1)||\alpha ''))-F(\omax(\alpha'|| (1,-1)||\alpha ''))\right)
\otimes y_1\\ &= c \left(\Phi_2(\omax(\alpha'|| (-1,1)||\alpha ''))
-\Phi_2(\omax(\alpha'|| (1,-1)||\alpha ''))\right)\\ &=
c\left(\omax(\alpha'|| (-1,1)||\alpha ''||(-1)) - \omax(\alpha'||
(1,-1)||\alpha ''||(-1))\right)
\end{align*}
and the formula is verified in this case. The verification for the
remaining case $(\alpha_{i},\alpha_{i+1})=(- 1,1)$ is similar and
omitted.

{\bf Case 2}: Let $i=n-1$ and suppose $\alpha$ is a $1$-factor of
length $n-2$. We consider $\alpha ||(\alpha_{n-1}, \alpha_n)$ where
the signs of $\alpha_{n-1}$ and $\alpha_n$ are the same or differ.  If
the product $\alpha_{n-1} \alpha_n = 1$ (the signs are the same) then
$e_{n-1} \omax(\alpha||(\alpha_{n-1}, \alpha_n)) = 0$, and we are
done.

It remains only to deal with the case where the product
$\alpha_{n-1}\alpha_n = -1$ (the signs are opposite). Set
$\alpha^{+-}=\alpha ||(1,-1)$ and $\alpha^{-+}=\alpha ||(-1,1)$. An
elementary (tedious) calculation with the definitions yields
\[
\omax(\alpha^{-+})-\omax(\alpha^{+-})=-[i_{\alpha}+1]
\omax(\alpha)\otimes\left( y_{1,-1} - v y_{-1,1} \right).
\]
Next consider the expansions of $e_{n-1} \omax(\alpha^{+-})$,
$e_{n-1} \omax(\alpha^{-+})$. We have
\begin{align*}
  e_{n-1} \omax(\alpha^{+-}) &= -[i_{\alpha}+2]\omax(\alpha)\otimes \left(
  y_{1,-1}-vy_{-1,1} \right)\\ \intertext{and} e_{n-1} \omax(\alpha^{-+})
  &= [i_{\alpha}]\omax(\alpha)\otimes \left( y_{1,-1}-vy_{-1,1}
  \right).
\end{align*}
The above calculations show that
\begin{align*}
e_{n-1} \omax(\alpha^{+-}) &=
\frac{[i_{\alpha}+2]}{[i_{\alpha}+1]}
\left(\omax(\alpha^{-+})-\omax(\alpha^{+-})\right) \\
\intertext{and}
e_{n-1} \omax(\alpha^{-+}) &= \frac{[i_{\alpha}]}{[i_{\alpha}+1]}
\left(\omax(\alpha^{+-})-\omax(\alpha^{-+})\right).
\end{align*}
For $w_{n-1}(\alpha) > 0$, we have
$w_{n-1}(\alpha^{+-})=w_{n-1}(\alpha^{-+})=i_{\alpha}$, so the above expressions
verify the desired formulas. The proof is complete.
\end{proof}

%%%%%%%%%%%%%%%%%%%%%%%%%%%%%%%%%%%%%%%%%%%%%%%%%%%%%%%%%%%%%
% bibliography using amsrefs package 
%%%%%%%%%%%%%%%%%%%%%%%%%%%%%%%%%%%%%%%%%%%%%%%%%%%%%%%%%%%%%
\begin{bibdiv}
  \begin{biblist}
    
\bib{AST}{article}{ author={Andersen, Henning Haahr},
  author={Stroppel, Catharina}, author={Tubbenhauer, Daniel},
  title={Cellular structures using $U_q$-tilting modules},
  journal={Pacific J. Math.}, volume={292}, date={2018}, number={1},
  pages={21--59},
%   issn={0030-8730},
%   review={\MR{3708257}},
%   doi={10.2140/pjm.2018.292.21},
}

\bib{Benkart-Moon}{article}{
   author={Benkart, Georgia},
   author={Moon, Dongho},
   title={Tensor product representations of Temperley--Lieb algebras and
   Chebyshev polynomials},
   conference={
      title={Representations of algebras and related topics},
   },
   book={
      series={Fields Inst. Commun.},
      volume={45},
      publisher={Amer. Math. Soc., Providence, RI},
   },
   date={2005},
   pages={57--80},
%   review={\MR{2146240}},
}

%\bib{BW}{article}{
%  author={Benkart, Georgia}, author={Witherspoon, Sarah},
%  title={Representations of two-parameter quantum groups and
%    Schur--Weyl duality},
%  conference={ title={Hopf algebras}, },
%  book={
%    series={Lecture Notes in Pure and Appl. Math.},
%    volume={237},
%    publisher={Dekker, New York},
%  },
%  date={2004},
%  pages={65--92},
%   review={\MR{2051731}},
%}

\bib{Birman-Wenzl}{article}{
   author={Birman, Joan S.},
   author={Wenzl, Hans},
   title={Braids, link polynomials and a new algebra},
   journal={Trans. Amer. Math. Soc.},
   volume={313},
   date={1989},
   number={1},
   pages={249--273},
%   issn={0002-9947},
%   review={\MR{992598}},
%   doi={10.2307/2001074},
}

\bib{Brauer}{article}{
   author={Brauer, Richard},
   title={On algebras which are connected with the semisimple continuous
   groups},
   journal={Ann. of Math. (2)},
   volume={38},
   date={1937},
   number={4},
   pages={857--872},
%   issn={0003-486X},
%   review={\MR{1503378}},
%   doi={10.2307/1968843},
}

\bib{Doty:RT}{article}{
   author={Doty, Stephen},
   title={Presenting generalized $q$-Schur algebras},
   journal={Represent. Theory},
   volume={7},
   date={2003},
   pages={196--213},
%   review={\MR{1990659}},
%   doi={10.1090/S1088-4165-03-00176-6},
}

\bib{D:invlim}{article}{
   author={Doty, Stephen},
   title={Constructing quantized enveloping algebras via inverse limits of
   finite dimensional algebras},
   journal={J. Algebra},
   volume={321},
   date={2009},
   number={4},
   pages={1225--1238},
%   issn={0021-8693},
%   review={\MR{2489897}},
%   doi={10.1016/j.jalgebra.2008.12.003},
}

\bib{DG:qrk1}{article}{
   author={Doty, Stephen},
   author={Giaquinto, Anthony},
   title={Presenting quantum Schur algebras as quotients of the quantized
     universal enveloping algebra of $\germ g\germ l_2$},
   status={preprint, %
     \href{https://arxiv.org/abs/math/0011164}{arXiv:math/0011164}},
   year={2000},
%   eprint={arXiv:math/0011164},
%   status={preprint},
%   year={2000}
}

\bib{DG:ann}{article}{
   author={Doty, Stephen},
   author={Giaquinto, Anthony},
   title={Generators and relations for Schur algebras},
   journal={Electron. Res. Announc. Amer. Math. Soc.},
   volume={7},
   date={2001},
   pages={54--62},
%   issn={1079-6762},
%   review={\MR{1852900}},
%   doi={10.1090/S1079-6762-01-00094-4},
}

\bib{DG:IMRN}{article}{
   author={Doty, Stephen},
   author={Giaquinto, Anthony},
   title={Presenting Schur algebras},
   journal={Int. Math. Res. Not.},
   date={2002},
   number={36},
   pages={1907--1944},
%   issn={1073-7928},
%   review={\MR{1920169}},
%   doi={10.1155/S1073792802201026},
}

\bib{DG:rk1}{article}{
   author={Doty, Stephen},
   author={Giaquinto, Anthony},
   title={Presenting Schur algebras as quotients of the universal enveloping
   algebra of $\germ g\germ l_2$},
   journal={Algebr. Represent. Theory},
   volume={7},
   date={2004},
   number={1},
   pages={1--17},
%   issn={1386-923X},
%   review={\MR{2046950}},
%   doi={10.1023/B:ALGE.0000019386.04383.f9},
}

%\bib{DG}{article}{
%   author={Doty, Stephen},
%   author={Giaquinto, Anthony},
%   title={Schur--Weyl duality for tensor powers of the Burau representation},
%   journal={Res. Math. Sci.},
%   volume={8},
%   date={2021},
%   number={3},
%   pages={Paper No. 47, 33 pages},
%   issn={2522-0144},
%   review={\MR{4289513}},
%   doi={10.1007/s40687-021-00282-3},
%}

\bib{DG:PTL}{article}{
   author={Doty, Stephen},
   author={Giaquinto, Anthony},
   title={The partial Temperley--Lieb algebra and its representations},
   journal={J. Comb. Algebra},
   volume={7},
   number={3/4},
   pages={401--439},  
   year={2023},
}

\bib{DG:survey}{article}{
   author={Doty, Stephen},
   author={Giaquinto, Anthony},
   title={Origins of the Temperley--Lieb algebra: early history},
   status={preprint, %
     \href{https://arXiv.org/abs/2307.11929}{arXiv:2307.11929}},
   year={2023},
}  

\bib{GL:96}{article}{
   author={Graham, J. J.},
   author={Lehrer, G. I.},
   title={Cellular algebras},
   journal={Invent. Math.},
   volume={123},
   date={1996},
   number={1},
   pages={1--34},
%   issn={0020-9910},
%   review={\MR{1376244}},
%   doi={10.1007/BF01232365},
}

\bib{GHJ}{book}{
   author={Goodman, Frederick M.},
   author={de la Harpe, Pierre},
   author={Jones, Vaughan F. R.},
   title={Coxeter graphs and towers of algebras},
   series={Mathematical Sciences Research Institute Publications},
   volume={14},
   publisher={Springer-Verlag, New York},
   date={1989},
%   pages={x+288},
%   isbn={0-387-96979-9},
%   review={\MR{999799}},
%   doi={10.1007/978-1-4613-9641-3},
}

%\bib{GV}{article}{
%   author={Gainutdinov, A. M.},
%   author={Vasseur, R.},
%   title={Lattice fusion rules and logarithmic operator product expansions},
%   journal={Nuclear Phys. B},
%   volume={868},
%   date={2013},
%   number={1},
%   pages={223--270},
%   issn={0550-3213},
%   review={\MR{3001127}},
%   doi={10.1016/j.nuclphysb.2012.11.004},
%}

%\bib{GW}{article}{
%   author={Goodman, Frederick M.},
%   author={Wenzl, Hans},
%   title={The Temperley-Lieb algebra at roots of unity},
%   journal={Pacific J. Math.},
%   volume={161},
%   date={1993},
%   number={2},
%   pages={307--334},
%   issn={0030-8730},
%   review={\MR{1242201}},
%}

%\bib{HMR}{article}{
%   author={Halverson, Tom},
%   author={Mazzocco, Manuela},
%   author={Ram, Arun},
%   title={Commuting families in Hecke and Temperley--Lieb algebras},
%   journal={Nagoya Math. J.},
%   volume={195},
%   date={2009},
%   pages={125--152},
%   issn={0027-7630},
%   review={\MR{2552957}},
%   doi={10.1017/S0027763000009740},
%}

%\bib{Jimbo}{article}{
%   author={Jimbo, Michio},
%   title={A $q$-analogue of $U({\germ g}{\germ l}(N+1))$, Hecke algebra,
%     and the Yang--Baxter equation},
%   journal={Lett. Math. Phys.},
%   volume={11},
%   date={1986},
%   number={3},
%   pages={247--252},
%   issn={0377-9017},
%   review={\MR{841713}},
%   doi={10.1007/BF00400222},
%}

\bib{Jones:83}{article}{
   author={Jones, V. F. R.},
   title={Index for subfactors},
   journal={Invent. Math.},
   volume={72},
   date={1983},
   number={1},
   pages={1--25},
%   issn={0020-9910},
%   review={\MR{696688}},
%   doi={10.1007/BF01389127},
}

\bib{Jones:85}{article}{
   author={Jones, V. F. R.},
   title={A polynomial invariant for knots via von Neumann algebras},
   journal={Bull. Amer. Math. Soc. (N.S.)},
   volume={12},
   date={1985},
   number={1},
   pages={103--111},
%   issn={0273-0979},
%   review={\MR{766964}},
%   doi={10.1090/S0273-0979-1985-15304-2},
}

\bib{Jones:86}{article}{
   author={Jones, V. F. R.},
   title={Braid groups, Hecke algebras and type ${\rm II}_1$ factors},
   conference={
      title={Geometric methods in operator algebras},
      address={Kyoto},
      date={1983},
   },
   book={
      series={Pitman Res. Notes Math. Ser.},
      volume={123},
      publisher={Longman Sci. Tech., Harlow},
   },
   date={1986},
   pages={242--273},
%   review={\MR{866500}},
}

\bib{Jones}{article}{
   author={Jones, V. F. R.},
   title={Hecke algebra representations of braid groups and link
   polynomials},
   journal={Ann. of Math. (2)},
   volume={126},
   date={1987},
   number={2},
   pages={335--388},
%   issn={0003-486X},
%   review={\MR{908150}},
%   doi={10.2307/1971403},
}

\bib{K:87}{article}{
   author={Kauffman, Louis H.},
   title={State models and the Jones polynomial},
   journal={Topology},
   volume={26},
   date={1987},
   number={3},
   pages={395--407},
%   issn={0040-9383},
%   review={\MR{899057}},
%   doi={10.1016/0040-9383(87)90009-7},
}

\bib{K:88}{article}{
   author={Kauffman, Louis H.},
   title={Statistical mechanics and the Jones polynomial},
   conference={
      title={Braids},
      address={Santa Cruz, CA},
      date={1986},
   },
   book={
      series={Contemp. Math.},
      volume={78},
      publisher={Amer. Math. Soc., Providence, RI},
   },
   date={1988},
   pages={263--297},
%   review={\MR{975085}},
%   doi={10.1090/conm/078/975085},
}

\bib{K:90}{article}{
   author={Kauffman, Louis H.},
   title={An invariant of regular isotopy},
   journal={Trans. Amer. Math. Soc.},
   volume={318},
   date={1990},
   number={2},
   pages={417--471},
%   issn={0002-9947},
%   review={\MR{958895}},
%   doi={10.2307/2001315},
}

\bib{Kauffman}{book}{
   author={Kauffman, Louis H.},
   title={Knots and physics},
   series={Series on Knots and Everything},
   volume={53},
   edition={4},
   publisher={World Scientific Publishing Co. Pte. Ltd., Hackensack, NJ},
   date={2013},
%   pages={xviii+846},
%   isbn={978-981-4383-01-1},
%   review={\MR{3013186}},
%   doi={10.1142/8338},
}

\bib{Lusztig}{book}{
   author={Lusztig, George},
   title={Introduction to quantum groups},
   series={Progress in Mathematics},
   volume={110},
   publisher={Birkh\"{a}user Boston, Inc., Boston, MA},
   date={1993},
%   pages={xii+341},
%   isbn={0-8176-3712-5},
%   review={\MR{1227098}},
}

\bib{Martin}{book}{
   author={Martin, Paul},
   title={Potts models and related problems in statistical mechanics},
   series={Series on Advances in Statistical Mechanics},
   volume={5},
   publisher={World Scientific Publishing Co., Inc., Teaneck, NJ},
   date={1991},
%   pages={xiv+344},
%   isbn={981-02-0075-7},
%   review={\MR{1103994}},
%   doi={10.1142/0983},
}

\bib{P-SA}{article}{
   author={Provencher, Guillaume},
   author={Saint-Aubin, Yvan},
   title={The idempotents of the ${\rm TL}_n$-module $\bigotimes^n\mathbb{C}2$
   in terms of elements of ${\rm U}_q\germ{sl}_2$},
   journal={Ann. Henri Poincar\'{e}},
   volume={15},
   date={2014},
   number={11},
   pages={2203--2240},
%   issn={1424-0637},
%   review={\MR{3268828}},
%   doi={10.1007/s00023-013-0297-x},
}

%\bib{Ridout-StAubin}{article}{
%   author={Ridout, David},
%   author={Saint-Aubin, Yvan},
%   title={Standard modules, induction and the structure of the
%   Temperley-Lieb algebra},
%   journal={Adv. Theor. Math. Phys.},
%   volume={18},
%   date={2014},
%   number={5},
%   pages={957--1041},
%   issn={1095-0761},
%   review={\MR{3281274}},
%}

\bib{TL}{article}{
   author={Temperley, H. N. V.},
   author={Lieb, E. H.},
   title={Relations between the ``percolation'' and ``colouring'' problem
   and other graph-theoretical problems associated with regular planar
   lattices: some exact results for the ``percolation'' problem},
   journal={Proc. Roy. Soc. London Ser. A},
   volume={322},
   date={1971},
   number={1549},
   pages={251--280},
%   issn={0962-8444},
%   review={\MR{498284}},
%   doi={10.1098/rspa.1971.0067},
}

%\bib{Westbury}{article}{
%   author={Westbury, B. W.},
%   title={The representation theory of the Temperley-Lieb algebras},
%   journal={Math. Z.},
%   volume={219},
%   date={1995},
%   number={4},
%   pages={539--565},
%   issn={0025-5874},
%   review={\MR{1343661}},
%   doi={10.1007/BF02572380},
%}

%\bib{Wilcox}{article}{
%   author={Wilcox, Stewart},
%   title={Cellularity of diagram algebras as twisted semigroup algebras},
%   journal={J. Algebra},
%   volume={309},
%   date={2007},
%   number={1},
%   pages={10--31},
%   issn={0021-8693},
%   review={\MR{2301230}},
%   doi={10.1016/j.jalgebra.2006.10.016},
%}

\end{biblist}
\end{bibdiv}

\end{document}